\newtheorem{theorem}{Theorem}[section]
\newtheorem{lemma}[theorem]{Lemma}
\newtheorem{proposition}[theorem]{Proposition}
\newtheorem{corollary}[theorem]{Corollary}
\theoremstyle{definition}
\newtheorem{definition}[theorem]{Definition}
\newtheorem{remark}[theorem]{Remark}
\definecolor{A}{rgb}{.75,1,.75}
\newcommand{\Z}{\mathbb{Z}}
\numberwithin{equation}{section}
\begin{document}

\title[affine and cyclotomic Yokonuma-Hecke algebras]{Modular representation theory of affine and cyclotomic Yokonuma-Hecke algebras}
\author[Weideng Cui and Jinkui Wan]{Weideng Cui and Jinkui Wan}
\address{(Cui) School of Mathematics, Shandong University, Jinan, Shandong 250100, P.R. China.}
\email{cwdeng@amss.ac.cn}

\address{
(Wan) School of Mathematics, Beijing Institute of Technology,
Beijing, 100081, P.R. China. } \email{wjk302@hotmail.com}

\begin{abstract}
We explore the modular representation theory of affine and cyclotomic Yokonuma-Hecke algebras.  We provide an equivalence between the category of finite dimensional representations of the affine (resp. cyclotomic) Yokonuma-Hecke algebra and that of an algebra which is a direct sum of tensor products of affine Hecke algebras of type $A$ (resp. Ariki-Koike algebras). As one of the applications, the irreducible representations of affine and cyclotomic Yokonuma-Hecke algebras are classified over an algebraically closed field of characteristic $p$. Secondly, the modular branching rules for these algebras are obtained; moreover, the resulting modular branching graphs for cyclotomic Yokonuma-Hecke algebras are identified with crystal graphs of irreducible integrable representations of affine Lie algebras of type $A.$
\end{abstract}



\thanks{\emph{\emph{2010} Mathematics Subject Classification}. 20C08.}
\thanks{\emph{Keywords.} Affine Yokonuma-Hecke algebras, cyclotomic Yokonuma-Hecke algebras, modular representations, branching rules.}
\thanks{\emph{Corresponding author:} Jinkui Wan, wjk302@hotmail.com}

\maketitle
\medskip
\section{Introduction}
\subsection{}
For the symmetric group $\mathfrak{S}_{n},$ Kleshchev \cite{Kle1} discovered the $p$-modular branching rules for irreducible representations of $\mathfrak{S}_{n}.$ Later on, Lascoux, Leclerc and Thibon \cite{LLT} established a close connection between global crystal bases of basic $U_q(\widehat{\mathfrak{sl}}_n)$-modules and modular representations of $\mathfrak{S}_{n},$ or more generally representations of Hecke algebras of type $A$ at roots of unity. The observation \cite{LLT} turned out to be a beginning of an exciting development which continues to this day,
including a development of deep connections between (affine, cyclotomic or degenerate affine) Hecke algebras of type $A$ at the $\ell$-th roots of unity, or cyclotomic quiver Hecke algebras of type $A$ associated with dominant integral weights of level $\ell,$ and canonical bases for integrable $U_q(\widehat{\mathfrak{sl}}_\ell)$-modules via categorification; see, e.g., [Ari1, BK1-3, BKW, Br, Gr, GV, Kle2] for related works.



\subsection{}
Yokonuma-Hecke algebras of general types were first introduced in the sixties by Yokonuma \cite{Yo}. In the late 1990s and early 2000s, Juyumaya and Kannan \cite{Ju1, JuK} gave a new presentation of the Yokonuma-Hecke algebra $Y_{r,n}(q)$ of type $A$, and since then it has been commonly used for studying this algebra. By Juyumaya and Kannan's presentation, the Yokonuma-Hecke algebra $Y_{r,n}(q)$ can be regarded as a deformation of the group algebra of the wreath product $(\mathbb{Z}/r\mathbb{Z})\wr \mathfrak{S}_{n}$ of the cyclic group $\mathbb{Z}/r\mathbb{Z}$ and the symmetric group $\mathfrak{S}_n$. It is well known that there exists another deformation of the group algebra of the wreath product $(\mathbb{Z}/r\mathbb{Z})\wr \mathfrak{S}_{n},$ namely the Ariki-Koike algebra \cite{AK}. The Yokonuma-Hecke algebra $Y_{r,n}(q)$ differs from the Ariki-Koike algebra in the way that $Y_{r,n}(q)$ is isomorphic to the modified Ariki-Koike algebra defined by Shoji \cite{S} according to Espinoza and Ryom-Hansen's work \cite{ER}.

Recently, by generalizing the approach of Okounkov-Vershik \cite{OV} to the representation theory of $\mathfrak{S}_n$, Chlouveraki and Poulain d'Andecy \cite{ChP1} introduced the notion of affine Yokonuma-Hecke algebra $\widehat{Y}_{r,n}(q)$ and gave an explicit construction for all irreducible representations of $Y_{r,n}(q)$ over $\mathbb{C}(q)$, and further obtained a semisimplicity criterion for $Y_{r,n}(q)$. In their subsequent paper \cite{ChP2}, they studied the representation theory of the affine Yokonuma-Hecke algebra $\widehat{Y}_{r,n}(q)$ and the cyclotomic Yokonuma-Hecke algebra $Y_{r,n}^{\lambda}(q)$. In particular, they gave the classification of irreducible representations of $Y_{r,n}^{\lambda}(q)$ in the generic semisimple case. In the past several years, the study of affine and cyclotomic Yokonuma-Hecke algebras has made substantial progress; see, e.g., \cite{ChJuKL, ChP1, ChP2, ChPo, ChS, C, ER, JPA, Lu3, PA, Ro, RS}.

\subsection{}

The affine Yokonuma-Hecke algebra $\widehat{Y}_{r,n}(q)$ introduced in \cite{ChP1} can actually be defined over an algebraically closed field $\mathbb{K}$ of characteristic $p$ such that $p$ does not divide $r$. Throughout the paper we shall denote by $\widehat{Y}_{r,n}$ (see Definition \ref{rel-def-Y1}) the affine Yokonuma-Hecke algebra over $\mathbb{K},$ and denote by $Y_{r,n}^{\lambda}$ (see \eqref{cyclotomic-yhc-alge-defini} for the definition) the associated cyclotomic Yokonuma-Hecke algebra. It turns out that the affine Yokonuma-Hecke algebra $\widehat{Y}_{r,n}$ has a degenerate version which is the so-called wreath Hecke algebra introduced by the second author and Wang in \cite{WW} which also includes an exploration of the modular representation theory and modular branching rules for wreath Hecke algebras. This paper is aimed to study the modular representation theory of the affine Yokonuma-Hecke algebras $\widehat{Y}_{r,n}$ by generalizing the approach of \cite{WW}. A classification of simple $\widehat{Y}_{r,n}$-modules as well as the classification of simple $Y_{r,n}^{\lambda}$-modules is provided. Meanwhile, we obtain the modular branching rules for $\widehat{Y}_{r,n}$ and $Y_{r,n}^{\lambda}$ respectively, and establish a connection to the crystal graphs of simple integrable modules of affine Lie algebras of type $A.$

\subsection{}
In this subsection we briefly introduce the framework of this article. In Section 2 we give an explicit description of the center of $\widehat{Y}_{r,n}.$

As an analog of the category equivalence established in \cite[Section 3]{WW}, an equivalence between the category of finite dimensional $\widehat{Y}_{r,n}$-modules and the module category of an algebra which is a direct sum of tensor products of various affine Hecke algebras of type $A$ is achieved in Section 3.

In Section 4, we will give three applications of the above module category equivalence. First of all, we provide the classification of simple $\widehat{Y}_{r,n}$-modules by using the known classification of simple modules for various affine Hecke algebras of type $A.$ As a second application, we establish the modular branching rule for $\widehat{Y}_{r,n}$. That is, we describe explicitly the socle of the restriction of a simple $\widehat{Y}_{r,n}$-module to a subalgebra $\widehat{Y}_{r,(n-1,1)}$ (see \eqref{eq:Young-sub}), and hence to the subalgebra $\widehat{Y}_{r,n-1}.$ Finally, we give a block decomposition in the category of finite dimensional $\widehat{Y}_{r,n}$-modules.

We then extend the equivalence established in Section 3 to the category of finite dimensional $Y_{r,n}^{\lambda}$-modules in Section 5 and present several applications in Section 6. Firstly, we give the classification of simple $Y_{r,n}^{\lambda}$-modules by applying the known classification of simple modules for various Ariki-Koike algebras. Secondly, we define an action of the affine Lie algebra $\widehat{sl}_{e}^{\oplus r},$ which is a direct sum of $r$ copies of $\widehat{sl}_{e}$ ($e$ denotes the order of $q$ in $\mathbb{K}^*$), on the direct sum of the Grothendieck groups of the module categories of $Y_{r,n}^{\lambda}$ over all $n\geq 0,$ and further show that the resulting representation is irreducible. Thirdly, we establish the modular branching rules for $Y_{r,n}^{\lambda}.$ That is, we describe explicitly the socle of the restriction of a simple $Y_{r,n}^{\lambda}$-module to a subalgebra $Y_{r,(n-1,1)}^{\lambda},$  and hence to the subalgebra $Y_{r,n-1}^{\lambda};$ moreover, we show that the modular branching graph for $Y_{r,n}^{\lambda}$ is isomorphic to the corresponding crystal graph of the simple $\widehat{sl}_{e}^{\oplus r}$-module $L(\lambda)^{\otimes r}.$ Finally, we give the classification of blocks for $Y_{r,n}^{\lambda},$ which is reduced to the known classification of blocks for the Ariki-Koike algebra due to Lyle and Mathas \cite{LM}.

Throughout the paper we assume that $r, n\in \mathbb{Z}_{\geq 1}$ and $\mathbb{K}$ is an algebraically closed field of characteristic $p$ such that $p$ does not divide $r$ (note that $p=0$ is possible). We remark that the assumption that $p$
does not divide $r$ is required so that the affine Yokonuma-Hecke algebras $\widehat{Y}_{r,n}$ are well-defined
over $\mathbb{K}$. We fix an invertible element $q\in \mathbb{K}$ and further assume that $q\neq 1.$



{\it Additional remark}: The first version of this paper was made available on arXiv in June 2015 (arXiv:1506.06570). Later on in 2016, Poulain d'Andecy posted his preprint \cite{PA} on arXiv, in which he established the algebra isomorphism between the affine (resp. cyclotomic) Yokonuma-Hecke algebra and a direct sum of matrix algebras with entries in affine Hecke algebras of type $A$ (resp. Ariki-Koike algebras); the isomorphism theorem for cyclotomic Yokonuma-Hecke algebras has been subsequently reobtained by Rostam \cite{Ro}. From their results, one can recover the category equivalences established in this paper using a different approach. It is also worthwhile to point out that the approach used in our paper or in \cite{WW} has been recently applied by Savage in \cite{Sa} to introduce and study the so-called affine wreath product algebras which appear naturally in Heisenberg categorification and in particular include the various known algebras such as degenerate affine Hecke algebras and wreath Hecke algebras as special cases.

\section{Definition and properties of affine Yokonuma-Hecke algebras}
In this section we first recall the definition of the affine Yokonuma-Hecke algebra $\widehat{Y}_{r,n}$ and introduce some necessary results following [ChPA1-2]. Then we describe the center of $\widehat{Y}_{r,n}.$
\subsection{The definition of affine Yokonuma-Hecke algebras}
\begin{definition}{\rm (See \cite[$\S3.1$]{ChP1}.)} The \emph{affine Yokonuma-Hecke algebra}, denoted by $\widehat{Y}_{r,n}=\widehat{Y}_{r,n}(q)$, is a $\mathbb{K}$-associative algebra generated by the elements $t_{1},\ldots,t_{n},$ $g_{1},\ldots,g_{n-1},$ $X_{1}^{\pm1}$ with relations:
\begin{equation}\label{rel-def-Y1}\begin{array}{rclcl}
g_{i}^{2}\hspace*{-7pt}&=&\hspace*{-7pt}q+(q-1)g_{i}e_{i} && \mbox{for $1\leq i\leq n-1$,}\\[0.1em]
g_ig_j\hspace*{-7pt}&=&\hspace*{-7pt}g_jg_i && \mbox{for $1\leq i,j\leq n-1$ such that $\vert i-j\vert \geq 2$,}\\[0.1em]
g_ig_{i+1}g_i\hspace*{-7pt}&=&\hspace*{-7pt}g_{i+1}g_ig_{i+1} && \mbox{for $1\leq i\leq n-2$,}\\[0.1em]
t_it_j\hspace*{-7pt}&=&\hspace*{-7pt}t_jt_i &&  \mbox{for $1\leq i,j\leq n$,}\\[0.1em]
t_i^r\hspace*{-7pt}&=&\hspace*{-7pt}1 && \mbox{for $1\leq i\leq n$,}\\[0.1em]
g_it_j\hspace*{-7pt}&=&\hspace*{-7pt}t_{s_i(j)}g_i && \mbox{for $1\leq i\leq n-1$ and $1\leq j\leq n$,}\\[0.1em]
\end{array}
\end{equation}
and with the following relations involving $X_{1}^{\pm 1}$:
\begin{equation}\label{rel-def-Y333-daadaaa}\begin{array}{rclcl}
X_{1}X_{1}^{-1}\hspace*{-7pt}&=&\hspace*{-7pt}X_{1}^{-1}X_{1}=1,\\[0.1em]
g_{1}X_{1}g_{1}X_{1}\hspace*{-7pt}&=&\hspace*{-7pt}X_{1}g_{1}X_{1}g_{1}, \\[0.1em]
g_{i}X_{1}\hspace*{-7pt}&=&\hspace*{-7pt}X_{1}g_{i} &&  \mbox{for $2\leq i\leq n-1$,}\\[0.1em]
t_{j}X_{1}\hspace*{-7pt}&=&\hspace*{-7pt}X_{1}t_{j} && \mbox{for $1\leq j\leq n$,}
\end{array}
\end{equation}
where $s_{i}$ is the transposition $(i,i+1)$ in the symmetric group $\mathfrak{S}_n$ on $n$ letters, and for each $1\leq i\leq n-1$,
\begin{equation*}\label{rel-def-Y2-addadd}
e_{i} :=\frac{1}{r}\sum\limits_{s=0}^{r-1}t_{i}^{s}t_{i+1}^{-s}.
\end{equation*}
\end{definition}

We assume that $\widehat{Y}_{r,0}=\mathbb{K}.$ Note that the quadratic relations in \eqref{rel-def-Y1} are different from those in \cite[$(3.1)$]{ChP1}. The formalization here follows from \cite[$(3.2)$]{ChPo}.

\begin{remark}\label{rem-YH}
We recall that the \emph{Yokonuma-Hecke algebra} $Y_{r,n}=Y_{r,n}(q)$ of type A, first defined by Yokonuma in \cite{Yo}, is an associative
algebra over $\mathbb{K}$ generated by elements $t'_{1},\ldots,t'_{n}$ and $g'_{1},\ldots,g'_{n-1}$ with the defining relations as in (\ref{rel-def-Y1}) with each $g_i$ replaced by $g'_i$ and each $t_j$ replaced by $t'_j$ \cite{Ju1, Ju2, JuK}. By \cite[(2.6)]{ChP2}, the homomorphism $\iota: Y_{r,n}\rightarrow \widehat{Y}_{r,n},$ which is defined by
\begin{equation*}
\iota(t'_j)=t_j\quad \mbox{for}~1\leq j\leq n \quad\mbox{and}\quad\iota(g'_i)=g_i\quad \mbox{for}~1\leq i\leq n-1,
\end{equation*}
is injective. Meanwhile, by \cite[(3.6)]{ChP1}, there exists a surjective algebra homomorphism $\pi: \widehat{Y}_{r,n}\rightarrow Y_{r,n},$ which is given by
\begin{equation*}
\pi(t_j)=t'_j, \quad \pi(g_i)=g'_i,\quad \pi(X_1)=1
\end{equation*}
for $1\leq j\leq n$ and $1\leq i\leq n-1$.
\end{remark}

The elements $g_{i}$ in $\widehat{Y}_{r,n}$ are invertible with the inverse given by
\begin{equation*}\label{inverse}
g_{i}^{-1}=q^{-1}g_{i}-(1-q^{-1})e_{i}\quad\mbox{for }1\leq i\leq n-1.
\end{equation*}
Let $w\in \mathfrak{S}_{n}$ and let $w=s_{i_1}\cdots s_{i_{r}}$ be a \emph{reduced expression} of $w.$  By Matsumoto's theorem (see, e.g., \cite[Theorem 1.2.2]{GP}), the element $g_{w} :=g_{i_1}g_{i_2}\cdots g_{i_{r}}$ does not depend on the choice of the reduced expression of $w.$ For each $w\in\mathfrak{S}_n$, we denote by $\ell(w)$ the length of $w$ with respect to the simple reflections in $\mathfrak{S}_{n}.$ Then for any $w\in\mathfrak{S}_n$ and $1\leq i\leq n-1,$ we have
\begin{align}\label{multi-formula-ac-2019add}
g_{w}g_{s_{i}}=\begin{cases}g_{ws_{i}}& \hbox {if } \ell(ws_{i})>\ell(w), \\qg_{ws_{i}}+(q-1)g_{w}e_{i}& \hbox {if } \ell(ws_{i})<\ell(w).\end{cases}
\end{align}


Note that the elements $e_{i}$ are idempotents in $\widehat{Y}_{r,n}$. For any $1\leq i, k\leq n,$ we set
\begin{equation*}
e_{i,k} :=\frac{1}{r}\sum\limits_{s=0}^{r-1}t_{i}^{s}t_{k}^{-s}.
\end{equation*}
It is clear that $e_{i,i}=1,$ $e_{i,k}=e_{k,i}$ and that $e_{i,i+1}=e_{i}.$ It can be easily checked that the following holds:
\begin{equation*}\label{egge}
g_{i}e_{j,k}=e_{s_{i}(j),s_{i}(k)}g_{i}\quad\mbox{for $1\leq i\leq n-1$ and $1\leq j,k\leq n$}.
\end{equation*}
In particular, we have $g_{i}e_{i}=e_{i}g_{i}$ for all $1\leq i\leq n-1.$

We define the elements $X_{2},\ldots,X_{n}$ in $\widehat{Y}_{r,n}$ by
\begin{equation*}
X_{i+1} :=q^{-1}g_{i}X_{i}g_{i}\quad\text{for }1\leq i\leq n-1.\label{X2n}
\end{equation*}
It is proved in \cite[Lemma 1]{ChP1} that we have, for any $1\leq i\leq n-1$,
\begin{equation}
g_{i}X_{j}=X_{j}g_{i}\quad\mathrm{for}~1\leq j\leq n~\mathrm{such~that}~j\neq i, i+1.\label{giXj}
\end{equation}
Moreover, by \cite[Proposition 1]{ChP1}, we have that the elements $t_{1},\ldots, t_{n}, X_{1},\ldots, X_{n}$ form a commutative set, that is,
\begin{equation}\label{xyyx}
xy=yx\quad\mathrm{for~any}~x,y\in \{t_{1},\ldots, t_{n}, X_{1},\ldots, X_{n}\}.
\end{equation}
We shall often use the following identities (see \cite[Lemma 2.3]{ChP2}): for $1\leq i\leq n-1$,
\begin{equation}\label{gxxg}\begin{array}{rclcl}
g_{i}X_{i}\hspace*{-7pt}&=&\hspace*{-7pt}X_{i+1}g_{i}-(q-1)e_{i}X_{i+1}, \\[0.3em]
g_{i}X_{i+1}\hspace*{-7pt}&=&\hspace*{-7pt}X_{i}g_{i}+(q-1)e_{i}X_{i+1}, \\[0.3em]
g_{i}X_{i}^{-1}\hspace*{-7pt}&=&\hspace*{-7pt}X_{i+1}^{-1}g_{i}+(q-1)e_{i}X_{i}^{-1}, \\[0.3em]
g_{i}X_{i+1}^{-1}\hspace*{-7pt}&=&\hspace*{-7pt}X_{i}^{-1}g_{i}-(q-1)e_{i}X_{i}^{-1}.
\end{array}
\end{equation}

\subsection{The center of affine Yokonuma-Hecke algebras}
In the rest of this paper, we always assume that all tensor products of algebras or modules are taken over $\mathbb{K}$ unless otherwise stated.

Let $\mathcal{T}$ be the subalgebra of $\widehat{Y}_{r,n}$ generated by $t_{1},\ldots,t_{n}.$ Set $\Z_{r} :=\{0,1,\ldots,r-1\}.$ For each $\beta=(\beta_1,\ldots, \beta_n)\in\Z_{r}^n$, set $ t^\beta := t_{1}^{\beta_1}\cdots t_{n}^{\beta_n}$. Observe that the symmetric group $\mathfrak{S}_n$ acts naturally on $\mathcal{T}$ by permutations, which is given by $h\mapsto {}^wh$ for any $w\in \mathfrak{S}_n$ and $h\in \mathcal{T}.$ Then for $\beta=(\beta_1,\ldots, \beta_n)\in\Z_{r}^n$ and each $w\in \mathfrak{S}_n,$ we have ${}^w(t^{\beta})=t^{w\beta}$, where $w\beta=(\beta_{w^{-1}(1)},\ldots,\beta_{w^{-1}(n)}).$

Let $P_{n}$ be the subalgebra of $\widehat{Y}_{r,n}$ generated by $X_{1}^{\pm1},\ldots,X_{n}^{\pm1}.$ For each $\alpha=(\alpha_1, \ldots , \alpha_n)\in\Z^n,$ set $ X^\alpha := X_{1}^{\alpha_1}\cdots X_{n}^{\alpha_n}$. There exists a natural action of the symmetric group $\mathfrak{S}_n$ on $P_{n}$ by permutations. Let us denote this action by $f\mapsto {}^wf$ for any $w\in \mathfrak{S}_n$ and $f\in P_{n}.$ Then for $\alpha=(\alpha_1,\ldots,\alpha_n)\in\mathbb{Z}^n$ and each $w\in \mathfrak{S}_n$, we have ${}^w(X^{\alpha})=X^{w\alpha}$, where $w\alpha=(\alpha_{w^{-1}(1)},\ldots,\alpha_{w^{-1}(n)}).$

By \eqref{gxxg} and by induction, we can easily get the following lemma.
\begin{lemma}\label{commutator}
For any $f\in P_{n}$ and $1\leq i\leq n-1,$ we have
\begin{equation}\label{gif}
g_{i}f-{}^{s_{i}}fg_{i}=(q-1)e_{i}\frac{f-{}^{s_{i}}f}{1-X_{i}X_{i+1}^{-1}}.
\end{equation}
Note that $f-{}^{s_{i}}f$ is divisible by $1-X_{i}X_{i+1}^{-1},$ and hence $\frac{f-{}^{s_{i}}f}{1-X_{i}X_{i+1}^{-1}},$ as an element of the field of fractions of $P_{n},$ lies in $P_{n}.$
\end{lemma}

Let $\preceq$ denote the Bruhat order on $\mathfrak{S}_{n}.$ By Lemma \ref{commutator}, we can easily get the next lemma.
\begin{lemma}\label{gtX-commutator}
Let $w\in \mathfrak{S}_{n},$ $t\in \mathcal{T}$ and $\alpha=(\alpha_1,\ldots,\alpha_{n})\in \mathbb{Z}^{n}.$ Then in $\widehat{Y}_{r,n},$ we have
\begin{equation*}
g_{w}tX^{\alpha}=({}^{w}t)X^{w\alpha}g_{w}+\sum\limits_{u\preceq w; u\neq w}t_{u}f_{u}g_{u},\quad\text{and}\quad
tX^{\alpha}g_{w}=g_{w}({}^{w^{-1}}t)X^{w^{-1}\alpha}+\sum\limits_{u\preceq w; u\neq w}g_{u}t_{u}'f_{u}'
\end{equation*}
for some $f_{u}, f_{u}'\in P_{n}$ and $t_{u}, t_{u}'\in \mathcal{T}.$
\end{lemma}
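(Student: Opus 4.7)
The plan is to prove both identities by induction on the length $\ell(w)$. The base case $w=e$ is trivial, and the case $w=s_i$ is the essential computation: the defining relation $g_i t_j = t_{s_i(j)} g_i$ yields $g_i t=({}^{s_i}t)\,g_i$, and then Lemma~\ref{commutator} applied to $f=X^\alpha$ splits $g_i X^\alpha$ as $X^{s_i\alpha}g_i$ plus a remainder $(q-q^{-1})e_i\,\frac{X^\alpha-X^{s_i\alpha}}{1-X_iX_{i+1}^{-1}}$, which lies in $\mathbb{K}T\cdot P_n^{\mathbb{K}}$. Since the identity is the only element $u$ with $u<s_i$ and $g_e=1$, this remainder has exactly the allowed shape $t_u f_u g_u$.

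For the inductive step I pick a simple reflection $s_i$ with $\ell(s_iw)<\ell(w)$, write $w=s_iw'$ with $\ell(w')=\ell(w)-1$, and apply the induction hypothesis to $g_{w'}tX^\alpha$. Multiplying on the left by $g_i$, the leading term $g_i({}^{w'}t)X^{w'\alpha}g_{w'}$ is processed by the base-case analysis on the pair $(s_i, {}^{w'}t, X^{w'\alpha})$; this produces the desired $({}^w t)X^{w\alpha}g_w$ together with a remainder in $\mathbb{K}T\cdot P_n^{\mathbb{K}}\cdot g_{w'}$. Each lower term $g_i t_{u'} f_{u'} g_{u'}$ with $u'<w'$ is rewritten by first commuting $t_{u'}$ past $g_i$ and then using Lemma~\ref{commutator} to peel off $g_i$ from $f_{u'}$; this leaves a piece involving $g_ig_{u'}$ and a piece involving $g_{u'}$ alone, and the quadratic relation $g_i^2=1+(q-q^{-1})e_ig_i$ further resolves $g_ig_{u'}$ as either $g_{s_iu'}$ or $g_{s_iu'}+(q-q^{-1})e_ig_{u'}$, so in every case the $g$-factor that appears is $g_{v}$ for some $v\in\{u',\,s_iu'\}$.

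The one genuinely combinatorial point, which I expect to be the main obstacle, is verifying that every surviving index $v$ satisfies $v<w$ in the Bruhat order. For $v=w'$ coming from the leading-term remainder this is automatic. For $v\in\{u',s_iu'\}$ coming from a lower term with $u'<w'$, the subword property gives $s_iu'\le s_iw'=w$, and the length estimate $\ell(s_iu')\le\ell(u')+1\le\ell(w')<\ell(w)$ forces strict inequality $s_iu'<w$; likewise $u'<w'<w$. For the second identity I would not repeat the argument on the right, but instead apply the anti-automorphism $\sigma$ of $\widehat{Y}_{r,n}(q)$ which fixes each generator $t_j, g_i, X_1^{\pm1}$ and reverses products; its compatibility with the defining relations is routine (the only nontrivial point being that $g_i^2=1+(q-q^{-1})e_ig_i$ is preserved thanks to $e_ig_i=g_ie_i$), and since $\sigma(g_w)=g_{w^{-1}}$, $\sigma(X^\alpha)=X^\alpha$, applying $\sigma$ to the first identity and then relabelling $w\mapsto w^{-1}$, $u\mapsto u^{-1}$ gives the second identity with $t_v'=t_{v^{-1}}$ and $f_v'=f_{v^{-1}}$.
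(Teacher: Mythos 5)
Your proof is correct and takes exactly the route the paper intends: the paper's own ``proof'' is the single remark that the lemma easily follows from Lemma~\ref{commutator}, and your induction on $\ell(w)$ using that lemma is the natural way to fill this in. All the delicate points are handled properly --- the quadratic relation resolving $g_ig_{u'}$, the lifting/subword property guaranteeing $u', s_iu' < w$, and the reversal anti-automorphism (which does preserve the relations, thanks to $e_ig_i=g_ie_i$) deducing the second identity from the first.
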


The following theorem gives a PBW basis of the affine Yokonuma-Hecke algebra $\widehat{Y}_{r,n}.$
\begin{theorem}\label{PBW}{\rm (See \cite[Theorem 4.4]{ChP2}.)}
The elements $\{X^{\alpha}t^\beta g_{w}\:|\: \alpha\in \mathbb{Z}^{n}, \beta\in \mathbb{Z}_r^n\text{ and }w\in \mathfrak{S}_{n}\}$ form a $\mathbb{K}$-basis of $\widehat{Y}_{r,n}.$
\end{theorem}

Let $G$ be a cyclic group of order $r$ and set $T :=G^n.$ By Theorem \ref{PBW}, the subalgebra $\mathcal{T}$ can be identified with the group algebra $\mathbb{K}T$ of the group $T$ while the subalgebra $P_n$ can be identified with the algebra $\mathbb{K}[X_1^{\pm1},\ldots,X_n^{\pm1}]$ of Laurent polynomials in $X_1,\ldots, X_{n}.$
Let $P_{n}(T)$ be the subalgebra of $\widehat{Y}_{r,n}$ generated by $t_{1},\ldots,t_{n}$ and $X_{1}^{\pm1},\ldots,X_{n}^{\pm1}.$ Then we have $$P_{n}(T)\cong P_{n}\otimes\mathbb{K}T.$$

\begin{lemma}
The center $Z(\widehat{Y}_{r,n})$ of $\widehat{Y}_{r,n}$ is contained in the subalgebra $P_{n}(T).$
\end{lemma}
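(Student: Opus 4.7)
The plan is to use the PBW basis from Theorem~\ref{PBW} together with Lemma~\ref{gtX-commutator} in the standard way one proves the analogous result for an affine Hecke algebra of type $A$. Since $P_n^{\mathbb{K}}(T)\cong P_n^{\mathbb{K}}\otimes_{\mathbb{K}}\mathbb{K}T$ by (2.7), every element $z\in\widehat{Y}_{r,n}^{\mathbb{K}}$ can be written uniquely as a finite sum
\[
z=\sum_{w\in\mathfrak{S}_n}a_w\,g_w,\qquad a_w\in P_n^{\mathbb{K}}(T).
\]
I would assume $z$ is central and let $w_0$ be a Bruhat-maximal element with $a_{w_0}\neq 0$; the goal is to show $w_0=1$.

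The main calculation: for any $f\in P_n^{\mathbb{K}}(T)$, I first note that Lemma~\ref{gtX-commutator}, applied monomial-by-monomial and extended linearly, gives
\[
g_w f\;=\;({}^{w}f)\,g_w\;+\;\sum_{u<w}c_u\,g_u
\]
with $c_u\in P_n^{\mathbb{K}}(T)$, where ${}^{w}$ denotes the natural $\mathfrak{S}_n$-action on $P_n^{\mathbb{K}}(T)$ (that is, $w\cdot t_j=t_{w(j)}$ and $w\cdot X_j=X_{w(j)}$). Using commutativity of $P_n^{\mathbb{K}}(T)$, I would then compute both $zf$ and $fz$ and compare the coefficient of $g_{w_0}$, which (by maximality of $w_0$) picks up no contribution from the ``lower terms''. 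The centrality condition $zf=fz$ thus forces
\[
a_{w_0}\bigl({}^{w_0}f-f\bigr)\;=\;0\qquad\text{for every }f\in P_n^{\mathbb{K}}(T).
\]

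To conclude, I need to produce an $f$ for which ${}^{w_0}f-f$ is a non-zero-divisor in $P_n^{\mathbb{K}}(T)$ when $w_0\neq 1$. Picking $i$ with $w_0(i)\neq i$ and taking $f=X_i$ gives ${}^{w_0}f-f=X_{w_0(i)}-X_i\in P_n^{\mathbb{K}}$. This is the point where the hypothesis $p\nmid r$ enters: $\mathbb{K}T$ is then semisimple and splits as $\bigoplus_{\chi}\mathbb{K}$, so $P_n^{\mathbb{K}}(T)\cong\bigoplus_{\chi}P_n^{\mathbb{K}}$ is a direct sum of Laurent polynomial rings, hence a direct sum of integral domains. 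Since $X_{w_0(i)}-X_i$ is non-zero in each component, it is a non-zero-divisor in $P_n^{\mathbb{K}}(T)$, forcing $a_{w_0}=0$, a contradiction. Therefore $w_0=1$ and $z=a_1\in P_n^{\mathbb{K}}(T)$.

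The main obstacle is really just the bookkeeping in the previous paragraph: establishing the ``${}^{w}f\,g_w+\text{lower}$'' form carefully from Lemma~\ref{gtX-commutator} (to make sure the torus part transforms correctly and lines up with the polynomial part under the same $\mathfrak{S}_n$-action), and then justifying the non-zero-divisor step without smuggling in anything beyond the paper's standing hypothesis $p\nmid r$. Everything else is a direct application of the PBW theorem.
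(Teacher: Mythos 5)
Your proposal is correct and follows essentially the same route as the paper: write $z=\sum_w a_w g_w$ via the PBW basis, isolate a Bruhat-maximal $w_0$ with $a_{w_0}\neq 0$, commute with $X_i$ for some $i$ with $w_0(i)\neq i$ using Lemma~\ref{gtX-commutator}, and compare coefficients of $g_{w_0}$ to force $a_{w_0}(X_{w_0(i)}-X_i)=0$. Your explicit justification that $X_{w_0(i)}-X_i$ is a non-zero-divisor in $P_n^{\mathbb{K}}(T)\cong\bigoplus_{\chi}P_n^{\mathbb{K}}$ is a welcome detail that the paper leaves implicit, but it does not change the argument.
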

\begin{proof}
Assume that $z$ is a central element of $\widehat{Y}_{r,n}.$ By Theorem \ref{PBW}, we can write $z$ as $z=\sum_{w\in \mathfrak{S}_{n}}z_{w}g_{w},$ where $z_{w}=\sum d_{t,\alpha}X^{\alpha}t\in P_{n}(T).$ Take $\tau$ to be maximal under the Bruhat order such that $z_{\tau}\neq 0.$ Assume that $\tau\neq 1.$ Then there exists some $i\in \{1,2,\ldots,n\}$ satisfying $\tau(i)\neq i.$

By Lemma \ref{gtX-commutator}, we have $$0=X_{i}z-zX_{i}=z_{\tau}(X_{i}-X_{\tau(i)})g_{\tau}+\sum\limits_{u\preceq \tau; u\neq \tau}a_{t',\beta,u}X^{\beta}t'g_{u}.$$ By Theorem \ref{PBW}, we see that $z_{\tau}=0,$ which contradicts the choice of $\tau.$ Thus, $\tau=1$ and $z\in P_{n}(T).$
\end{proof}

We set
$$P_{n}(T)^{\mathfrak{S}_{n}} :=\Big\{\sum d_{\alpha,\beta}X^{\alpha}t^{\beta}\in  P_{n}(T)\:\big|\:\sum d_{\alpha,\beta}X^{\alpha}t^{\beta}=\sum d_{\alpha,\beta}X^{w\alpha}t^{w\beta}~\mathrm{for~any}~w\in \mathfrak{S}_{n}\Big\}.$$

\begin{theorem}\label{center}
We have $Z(\widehat{Y}_{r,n})=P_{n}(T)^{\mathfrak{S}_{n}}.$
\end{theorem}
\begin{proof}
Suppose that
$$z=\sum\limits_{\alpha\in \mathbb{Z}^{n}; \beta\in \mathbb{Z}_{r}^{n}}d_{\alpha,\beta}X^{\alpha}t^{\beta}\in Z(\widehat{Y}_{r,n}).$$
Then for each $1\leq k\leq n-1,$ we have $g_{k}z=zg_{k},$ that is, $g_{k}\cdot \sum d_{\alpha,\beta}X^{\alpha}t^{\beta}=$
$\sum d_{\alpha,\beta}X^{\alpha}t^{\beta}g_{k}.$ Thus, by Lemma \ref{commutator} we get $$\sum_{\alpha,\beta} d_{\alpha,\beta} X^{s_{k}\alpha}t^{s_{k}\beta}g_{k}+(q-1)\sum_{\alpha,\beta}d_{\alpha,\beta}e_{k}\frac{X^{\alpha}-X^{s_{k}\alpha}}{1-X_{k}X_{k+1}^{-1}}t^{\beta}=
\sum_{\alpha,\beta}d_{\alpha,\beta}X^{\alpha}t^{\beta}g_{k}.$$
By Theorem \ref{PBW}, we must have
\begin{align}
\sum_{\alpha,\beta}d_{\alpha,\beta}X^{s_{k}\alpha}t^{s_{k}\beta}=\sum_{\alpha,\beta}d_{\alpha,\beta}X^{\alpha}t^{\beta} \quad\text{ for any }1\leq k\leq n-1,\label{center-eqn1}\\
\sum_{\alpha,\beta}d_{\alpha,\beta}e_{k}\frac{X^{\alpha}-X^{s_{k}\alpha}}{1-X_{k}X_{k+1}^{-1}}t^{\beta}=0 \quad\text{ for any }1\leq k\leq n-1.\label{center-eqn2}
\end{align}

We claim that \eqref{center-eqn1} implies \eqref{center-eqn2}. Assume that \eqref{center-eqn1} holds. For each $\beta\in\Z_r^n$ and $1\leq k\leq n-1$, by \cite[(2.13)]{ChP1} we can easily get $e_kt^\beta=e_k t^{s_k\beta}.$
Then we have
\begin{align*}
\sum d_{\alpha,\beta}e_kX^{\alpha}t^{\beta}&=\sum d_{\alpha,\beta}e_kX^{s_k\alpha}t^{s_k\beta} \text{ (by }\eqref{center-eqn1}) \\
&=\sum d_{\alpha,\beta}X^{s_k\alpha}e_kt^{s_k\beta} \text{ (by }\eqref{xyyx}) \\
&= \sum d_{\alpha,\beta}X^{s_k\alpha}e_kt^{\beta} \\
&= \sum d_{\alpha,\beta}e_kX^{s_k\alpha}t^{\beta}.
\end{align*}
Therefore, we see that \eqref{center-eqn2} holds.

By using Theorem \ref{PBW} again, we see that \eqref{center-eqn1} holds if and only if $d_{\alpha,\beta}=d_{s_k\alpha,s_k\beta}$
for $\alpha\in\Z^n, \beta\in\Z_r^n$ and $1\leq k\leq n-1,$ or equivalently, $d_{w\alpha,w\beta}=d_{\alpha,\beta}$ for any $w\in\mathfrak{S}_n$ and $\alpha\in\Z^n, \beta\in\Z_r^n.$

By reversing the above arguments, it is easy to see that an element $z\in\widehat{Y}_{r,n}$ of the form $\sum_{\alpha,\beta} d_{\alpha,\beta}X^{\alpha}t^{\beta}$ with $d_{w\alpha,w\beta}=d_{\alpha,\beta}$ for any $w\in\mathfrak{S}_n,$ belongs to $Z(\widehat{Y}_{r,n}).$
\end{proof}
\begin{corollary}
If $M$ is a simple $\widehat{Y}_{r,n}$-module, then $M$ is finite dimensional.
\end{corollary}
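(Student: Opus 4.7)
The plan is to reduce to the standard fact that a simple module over a finite-dimensional $\mathbb{K}$-algebra is finite-dimensional, by first exhibiting $A:=\widehat{Y}_{r,n}^{\mathbb{K}}$ as a finite module over an affine (i.e.\ finitely generated) commutative central subalgebra, and then invoking Kaplansky's structure theorem for primitive polynomial-identity rings together with the Nullstellensatz.

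By the PBW theorem (Theorem~\ref{PBW}), $A$ is free of rank $n!$ as a left module over the commutative subalgebra $P_{n}^{\mathbb{K}}(T)$, which is itself a finitely generated commutative $\mathbb{K}$-algebra. Since $\mathfrak{S}_n$ is a finite group, for each $f\in P_{n}^{\mathbb{K}}(T)$ the monic polynomial $\prod_{w\in\mathfrak{S}_n}(Y-{}^{w}f)\in Z[Y]$ vanishes at $f$, where $Z:=P_{n}^{\mathbb{K}}(T)^{\mathfrak{S}_n}=Z(A)$ by Theorem~\ref{center}. Hence $P_{n}^{\mathbb{K}}(T)$ is integral over $Z$, and a standard Artin--Tate argument shows that $Z$ is itself a finitely generated $\mathbb{K}$-algebra and that $P_{n}^{\mathbb{K}}(T)$ is finite as a $Z$-module. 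Composing, $A$ is a finite $Z$-module.

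Now let $M$ be a simple $A$-module and set $\bar A=A/\mathrm{ann}_{A}(M)$, which is a primitive $\mathbb{K}$-algebra. Being a quotient of a ring that is finite over a commutative subring, $\bar A$ satisfies a polynomial identity. By Kaplansky's theorem, $\bar A\cong M_{k}(D)$ for some division algebra $D$ finite-dimensional over its center $F=Z(\bar A)$. The image of $Z$ in $\bar A$ is central and hence lies in $F$; moreover, $\bar A$ is a finite module over this image, so the submodule $F$ is also finite over it. Thus $F$ is a field that is finitely generated as a $\mathbb{K}$-algebra, and Zariski's lemma forces $F/\mathbb{K}$ to be a finite algebraic extension. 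Since $\mathbb{K}$ is algebraically closed, $F=\mathbb{K}$, so $\bar A$ is finite-dimensional over $\mathbb{K}$, and therefore so is its simple module $M$.

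The main obstacle is conceptual rather than computational: one might hope to conclude directly via Schur's lemma that $Z$ acts on $M$ through a central character $Z\to\mathbb{K}$, but the usual Dixmier--Quillen argument for this requires either $\mathbb{K}$ to be uncountable or $A$ to be of countable dimension with a suitable cardinality hypothesis; neither is assumed here. The Kaplansky/PI detour above is what circumvents this issue in full generality and makes essential use of the affineness of $Z$ established from Theorem~\ref{center}.
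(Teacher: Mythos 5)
Your proof is correct, and it diverges from the paper's argument exactly at the point you flag. The paper also begins by observing that $A=\widehat{Y}_{r,n}^{\mathbb{K}}$ is a finite module over its center $Z=P_{n}^{\mathbb{K}}(T)^{\mathfrak{S}_{n}}$ (via Theorem~\ref{center} and the freeness of $P_{n}^{\mathbb{K}}(T)$ over its $\mathfrak{S}_{n}$-invariants), but it then finishes by invoking ``Dixmier's version of Schur's lemma'' to produce a central character and reduce to a finite-dimensional quotient. As you note, that step requires $\dim_{\mathbb{K}}M<|\mathbb{K}|$; since $A$ has countable dimension this is automatic for uncountable $\mathbb{K}$, but not for countable algebraically closed fields such as $\overline{\mathbb{F}_p}$ or $\overline{\mathbb{Q}}$, which are squarely within the paper's hypotheses. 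Your replacement --- $Z$ is affine by integrality plus Artin--Tate, $A$ is a finite $Z$-module and hence PI, and Kaplansky together with Zariski's lemma forces the center of the primitive quotient to equal $\mathbb{K}$ --- makes the conclusion unconditional, at the cost of importing PI theory; all the intermediate claims (freeness of rank $n!$ over $P_{n}^{\mathbb{K}}(T)$ from Theorem~\ref{PBW}, integrality over the invariants via $\prod_{w\in\mathfrak{S}_n}(Y-{}^{w}f)$, the PI property of rings finite over a central subring) are standard and correctly deployed. A slightly more elementary route with the same inputs: $M=Am$ is a finitely generated module over the affine Noetherian ring $Z$, so some maximal ideal $\mathfrak{m}\subseteq Z$ satisfies $\mathfrak{m}M\neq M$ by Nakayama; as $\mathfrak{m}M$ is an $A$-submodule of the simple module $M$, it vanishes, and $M$ becomes a module over the finite-dimensional algebra $A/\mathfrak{m}A$. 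Either way, your argument is complete and in fact repairs a small gap in the paper's own proof.
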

\begin{proof}
It is known that $P_{n}$ is a free $\mathbb{K}[X_{1}^{\pm1},\ldots,X_{n}^{\pm1}]^{\mathfrak{S}_{n}}$-module of finite rank $n!,$ and $\mathbb{K}T$ is a free $(\mathbb{K}T)^{\mathfrak{S}_{n}}$-module of finite rank. By Theorem \ref{center} we conclude that $\widehat{Y}_{r,n}$ is a finitely generated module over its center $Z(\widehat{Y}_{r,n}).$ Dixmier's version of Schur's lemma (see, e.g., \cite[0.5.2]{Wa}) implies that the center of $\widehat{Y}_{r,n}$ acts by scalars on absolutely simple modules, which implies that $M$ is a simple module for a finite dimensional algebra, and hence $M$ is finite dimensional.
\end{proof}

\begin{remark} Recently, Chlouveraki and S\'{e}cherre \cite[Theorem 4.3]{ChS} proved that the affine Yokonuma-Hecke algebra is a particular case of the pro-$p$-Iwahori-Hecke algebra defined by Vign\'eras in \cite{Vi1}. In \cite[Theorem 1.3]{Vi2} Vign\'eras described the center of the pro-$p$-Iwahori-Hecke algebra over any commutative ring $R.$ Thus, our Theorem \ref{center} can be regarded as a particular case of Vign\'eras' results.
\end{remark}

\section{An equivalence of two categories}

In this section, we establish an explicit equivalence between the category $\widehat{Y}_{r,n}$-$\mathbf{mod}$ of finite dimensional $\widehat{Y}_{r,n}$-modules and the category $\widehat{\mathcal{H}}_{r,n}$-$\mathbf{mod}$ of finite dimensional $\widehat{\mathcal{H}}_{r,n}$-modules, where $\widehat{\mathcal{H}}_{r,n}$ is a direct sum of tensor products of various affine Hecke algebras of type $A.$ The category equivalence plays a key role throughout the rest of this paper.

\subsection{A decomposition of $\widehat{Y}_{r,n}$-modules}
Recall that $G$ is a cyclic group of order $r$ and $T=G^n.$ Since $\mathbb{K}$ is an algebraically closed field of characteristic $p$ such that $p$ does not divide $r$, there exists a primitive $r$-th root of unity $\zeta.$ Fix a generator $g$ of the cyclic group $G.$ For each $1\leq a\leq r,$ there exists a one-dimensional $\mathbb{K}G$-module $V_a$ such that $g$ acts on $V_{a}$ as the scalar $\zeta^{a}.$ Since $\mathbb{K}$ is an algebraically closed field of characteristic $p$ such that $p$ does not divide $r,$ the set $\{V_1,\ldots,V_{r}\}$ is a complete one of pairwise non-isomorphic finite dimensional simple $\mathbb{K}G$-modules. Hence, we see that $\{V_{i_1}\otimes\cdots\otimes V_{i_{n}}\:|\:1\leq i_{1},\ldots,i_{n}\leq r\}$ is a complete set of pairwise non-isomorphic simple $\mathbb{K}T$-modules.

By the above arguments, we can easily obtain the following lemma, which can be regarded as a particular case of \cite[Corollary 3.3]{WW}. Recall that $e_{i}=\frac{1}{r}\sum_{s=0}^{r-1}t_{i}^{s}t_{i+1}^{-s}$ for $1\leq i\leq n-1$.


\begin{lemma}\label{weig-e-0}
Suppose that $1\leq i_{1},\ldots,i_{n}\leq r$ and $1\leq k\leq n-1.$ If $i_{k}=i_{k+1},$ $e_{k}$ acts as the identity on the module $V_{i_1}\otimes\cdots\otimes V_{i_{n}};$ otherwise, $e_{k}$ acts as zero on it.
\end{lemma}
\begin{proof}
 The lemma easily follows from the fact that $e_{1}$ acts as zero on a simple $\mathbb{K}G^{2}$-module $V_{k}\otimes V_{l}$,where $1\leq k, l\leq r$ and $k\neq l,$ and it acts as the identity on the $\mathbb{K}G^{2}$-module $V_{k}^{\otimes2}$ for $1\leq k\leq r.$
\end{proof}

Given an algebra $S,$ we denote by $S$-$\mathbf{mod}$ the category of finite dimensional left $S$-modules. Since $\mathbb{K}$ is an algebraically closed field of characteristic $p$ such that $p$ does not divide $r$, every module in $\widehat{Y}_{r, n}$-$\mathbf{mod}$ is semisimple when restricted to the subalgebra $\mathbb{K}T.$

Let $\mathcal{C}_{r}(n)$ be the set of $r$-\emph{compositions} of $n,$ that is, the set of $r$-tuples of nonnegative integers $\mu=(\mu_{1},\ldots,\mu_{r})$ such that $\sum_{1\leq a\leq r}\mu_{a}=n.$ Fix one $\mu=(\mu_{1},\ldots,\mu_{r})\in \mathcal{C}_{r}(n).$ Let $$V(\mu) :=V_{1}^{\otimes \mu_{1}}\otimes\cdots\otimes V_{r}^{\otimes \mu_{r}}$$ be the associated simple $\mathbb{K}T$-module and let $\mathfrak{S}_{\mu} :=\mathfrak{S}_{\mu_{1}}\times\cdots\times\mathfrak{S}_{\mu_{r}}$ be the associated \emph{Young subgroup} of $\mathfrak{S}_{n}.$ For each $1\leq i\leq r,$ since $\dim V_i=1,$ we will assume that $V_i=\mathbb{K}v_i.$ Set $v_\mu :=v_1^{\otimes \mu_1}\otimes\cdots\otimes v_r^{\otimes\mu_r}\in V(\mu).$ Then we have $V(\mu)=\mathbb{K}v_\mu$.

Fix one $\mu=(\mu_{1},\ldots,\mu_{r})\in \mathcal{C}_{r}(n).$ We denote by $\mathcal{O}(\mu)$ a complete set of left coset representatives of $\mathfrak{S}_{\mu}$ in $\mathfrak{S}_{n}.$ We define $\widehat{Y}_{r,\mu}$ to be the subalgebra of $\widehat{Y}_{r,n}$ generated by $t_{1},\ldots,t_{n},$ $X_{1}^{\pm1},\ldots,X_{n}^{\pm1}$ and $g_{w}$ for all $w\in \mathfrak{S}_{\mu}.$ Then by Definition \ref{rel-def-Y1} we have
\begin{equation}\label{eq:Young-sub}
\widehat{Y}_{r,\mu}\cong\widehat{Y}_{r,\mu_{1}}\otimes\cdots\otimes\widehat{Y}_{r,\mu_{r}}.
\end{equation}
Moreover, every module in $\widehat{Y}_{r,\mu}$-$\mathbf{mod}$ is semisimple when restricted to the subalgebra $\mathbb{K}T$.

For each $\mu\in \mathcal{C}_{r}(n)$ and $M\in \widehat{Y}_{r, n}$-$\mathbf{mod},$ we define $I_{\mu}M$ to be the \emph{isotypical subspace} of $V(\mu)$ in $M,$ that is, the sum of all simple $\mathbb{K}T$-submodules of $M$ isomorphic to $V(\mu).$ We define $M_{\mu}$ by
\begin{equation}\label{isotypical-subspaces}
M_{\mu} :=\sum_{w\in \mathfrak{S}_{n}}g_{w}(I_{\mu}M).
\end{equation}
In general, for two algebras $A\subseteq B$ and an $A$-module $M,$ we then define the induced $B$-module by $\mathrm{Ind}_{A}^{B}M:=B\otimes_{A}M.$

\begin{lemma}\label{weig-e-1}
Assume that $\mu=(\mu_{1},\ldots,\mu_{r})\in \mathcal{C}_{r}(n)$ and $M\in \widehat{Y}_{r,n}$-$\mathbf{mod}.$ Then, $I_{\mu}M$ is a $\widehat{Y}_{r,\mu}$-submodule and $M_{\mu}$ is a $\widehat{Y}_{r,n}$-submodule of $M.$ Moreover, we have $M_{\mu}\cong \mathrm{Ind}_{\widehat{Y}_{r,\mu}}^{\widehat{Y}_{r,n}}(I_{\mu}M).$
\end{lemma}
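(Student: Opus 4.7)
Since $\mathbb{K}$ has characteristic coprime to $r$ and $G=\mathbb{Z}/r\mathbb{Z}$ is cyclic, every simple $\mathbb{K}T$-module is one-dimensional, and $I_\mu M$ is the simultaneous $\chi_\mu$-eigenspace of $\mathbb{K}T$, where $\chi_\mu$ is the character determined by the standard sequence $\sigma_\mu=(1^{\mu_1},\ldots,r^{\mu_r})$. Writing $M=\bigoplus_\sigma I_\sigma M$ for the isotypic decomposition indexed by sequences $\sigma\in\{1,\ldots,r\}^n$, the defining relation $g_it_j=t_{s_i(j)}g_i$ together with invertibility of $g_w$ shows that $g_w$ restricts to a linear bijection $I_\sigma M\xrightarrow{\sim}I_{w\cdot\sigma}M$, where $(w\cdot\sigma)_j:=\sigma_{w^{-1}(j)}$.

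For the $\widehat{Y}_{r,\mu}^{\mathbb{K}}$-stability of $I_\mu M$, the $t_i$ act tautologically, the $X_i^{\pm 1}$ preserve every $I_\sigma M$ by the commutativity \eqref{xyyx}, and for $s_i\in\mathfrak{S}_\mu$ the indices $i,i+1$ lie in the same block of $\mu$, so $\sigma_\mu(i)=\sigma_\mu(i+1)$, whence $s_i\cdot\sigma_\mu=\sigma_\mu$ and $g_iI_\mu M\subseteq I_\mu M$; an induction on $\ell(w)$ handles all $w\in\mathfrak{S}_\mu$. For the $\widehat{Y}_{r,n}^{\mathbb{K}}$-stability of $M_\mu$, the bijection $g_w(I_\mu M)=I_{w\cdot\sigma_\mu}M$ gives
\[
M_\mu=\bigoplus_{\sigma\in\mathfrak{S}_n\cdot\sigma_\mu}I_\sigma M,
\]
which is manifestly stable under $\mathbb{K}T$ and the $X_j^{\pm 1}$ by \eqref{xyyx}, and stable under each $g_i$ because $g_iI_\sigma M\subseteq I_{s_i\cdot\sigma}M$ and the $\mathfrak{S}_n$-orbit of $\sigma_\mu$ is closed under each $s_i$.

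For the identification with the induced module, the $\widehat{Y}_{r,\mu}^{\mathbb{K}}$-equivariant inclusion $I_\mu M\hookrightarrow M_\mu$ extends by universality to a $\widehat{Y}_{r,n}^{\mathbb{K}}$-linear map
\[
\Phi\colon \mathrm{Ind}_{\widehat{Y}_{r,\mu}^{\mathbb{K}}}^{\widehat{Y}_{r,n}^{\mathbb{K}}}(I_\mu M)\longrightarrow M_\mu,\qquad x\otimes v\mapsto xv,
\]
which is surjective by definition of $M_\mu$. Injectivity reduces to the claim that $\widehat{Y}_{r,n}^{\mathbb{K}}$ is free as a right $\widehat{Y}_{r,\mu}^{\mathbb{K}}$-module with basis $\{g_u:u\in\mathcal{O}(\mu)\}$, where $\mathcal{O}(\mu)$ is chosen as the set of minimal length left coset representatives. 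Granting this, $\mathrm{Ind}(I_\mu M)=\bigoplus_{u\in\mathcal{O}(\mu)}g_u\otimes I_\mu M$, and $\Phi(g_u\otimes v)=g_uv$ lies in the distinct isotypic subspace $I_{u\cdot\sigma_\mu}M$, giving linear independence.

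The main obstacle is establishing this freeness. The plan is to combine the PBW basis of Theorem \ref{PBW} with the length-additive factorization $w=uv$ ($u\in\mathcal{O}(\mu)$, $v\in\mathfrak{S}_\mu$, $\ell(w)=\ell(u)+\ell(v)$, hence $g_w=g_ug_v$), and apply the right-multiplication formula of Lemma \ref{gtX-commutator} to rewrite each PBW basis element as
\[
X^\alpha tg_w=g_u\cdot\bigl({}^{u^{-1}}\hspace{-2pt}t\,X^{u^{-1}\alpha}g_v\bigr)+\sum_{u'<u}g_{u'}\cdot h_{u'},
\]
with $h_{u'}\in\widehat{Y}_{r,\mu}^{\mathbb{K}}$; an induction on the Bruhat order on $u$, together with the triangular change of basis this gives, then identifies $\{g_u\cdot X^\alpha tg_v:u\in\mathcal{O}(\mu),v\in\mathfrak{S}_\mu,\alpha\in\mathbb{Z}^n,t\in T\}$ as a $\mathbb{K}$-basis of $\widehat{Y}_{r,n}^{\mathbb{K}}$, yielding freeness and completing the proof.
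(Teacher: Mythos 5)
Your proof is correct and follows essentially the same route as the paper's: stability of $I_{\mu}M$ and $M_{\mu}$ via the commutation relations and the isotypic decomposition of $M$ over $\mathbb{K}T$, and the identification with $\mathrm{Ind}_{\widehat{Y}_{r,\mu}^{\mathbb{K}}}^{\widehat{Y}_{r,n}^{\mathbb{K}}}(I_{\mu}M)$ via the canonical surjection followed by a dimension comparison. The only difference is that you spell out the freeness of $\widehat{Y}_{r,n}^{\mathbb{K}}$ as a right $\widehat{Y}_{r,\mu}^{\mathbb{K}}$-module and the directness of $\sum_{u\in\mathcal{O}(\mu)}g_{u}(I_{\mu}M)$, which the paper compresses into the phrase ``an isomorphism by counting dimensions.''
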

\begin{proof}
Since $X_{i}^{\pm 1}$ commutes with $\mathbb{K}T$ for each $1\leq i\leq n,$ we see that each $X_{i}^{\pm 1}$ ($1\leq i\leq n$) maps a simple $\mathbb{K}T$-submodule of $M$ to an isomorphic one. Hence, $I_{\mu}M$ is invariant under the action of the subalgebra $P_{n}.$ Fix one $i\in \{1,2,\ldots,n-1\}\backslash\{\mu_{1},\mu_1+\mu_2,\ldots,\mu_1+\cdots+\mu_{r-1}\}.$ For each $1\leq j\leq n,$ we have $t_{j}(g_{i}^{\pm 1}v_\mu)=g_{i}^{\pm 1}t_{s_{i}(j)}(v_\mu)=g_{i}^{\pm 1}t_j(v_\mu).$ From the above identities we see that $g_{i}V(\mu)$ is a $\mathbb{K}T$-module, and is isomorphic to $V(\mu).$ Hence, for each $w\in \mathfrak{S}_{\mu},$ $g_{w}$ maps a simple $\mathbb{K}T$-submodule of $M$ isomorphic to $V(\mu)$ to another isomorphic one. Thus, $I_{\mu}M$ is invariant under the action of $g_{w}$ for all $w\in \mathfrak{S}_{\mu}.$ Since $\widehat{Y}_{r,\mu}$ is generated by $P_{n},$ $\mathbb{K}T$ and $g_{w}$ (with $w\in \mathfrak{S}_{\mu}$), we see that $I_{\mu}M$ is a $\widehat{Y}_{r,\mu}$-submodule.


By \eqref{gxxg} and \eqref{isotypical-subspaces}, it is easy to see that $M_{\mu}$ is a $\widehat{Y}_{r,n}$-submodule of $M.$

By Frobenius reciprocity, we have a nonzero $\widehat{Y}_{r,n}$-homomorphism
$$\varphi: \mathrm{Ind}_{\widehat{Y}_{r,\mu}}^{\widehat{Y}_{r,n}}(I_{\mu}M)\rightarrow M_{\mu}.$$
Note that by \eqref{multi-formula-ac-2019add}, for each $w=\tau w_{\mu}$ with $\tau\in \mathcal{O}(\mu)$ and $w_{\mu}\in \mathfrak{S}_{\mu},$ we have $g_{w}(I_{\mu}M)=g_{\tau}(I_{\mu}M).$ Hence
\begin{equation}\label{isotypical-subspaces-add}
M_{\mu}=\sum_{\tau\in \mathcal{O}(\mu)}g_{\tau}(I_{\mu}M),
\end{equation}
which implies that $\varphi$ is surjective. Observe that as $\mathbb{K}T$-modules, $g_{\tau_1}(I_{\mu}M)$ and $g_{\tau_2}(I_{\mu}M)$ are isotypical subspaces of two non-isomorphic simple $\mathbb{K}T$-modules in $M$ if $\tau_1\neq \tau_2\in \mathcal{O}(\mu).$ Hence the sum in \eqref{isotypical-subspaces-add} is in fact direct, and the dimension of $M_{\mu}$ is $$\dim{I_{\mu}M}\cdot \frac{n!}{\mu_{1}!\mu_{2}!\cdots \mu_{r}!},$$ where $\dim{I_{\mu}M}$ denotes the dimension of $I_{\mu}M.$ Thus, $\varphi$ is an isomorphism by comparing dimensions of both sides of it.
\end{proof}

\begin{lemma}\label{weig-e-2}
For each $M\in\widehat{Y}_{r,n}$-$\mathbf{mod}$, we have the following decomposition$:$
\begin{equation}\label{the-decomposition-aaa}
M=\bigoplus_{\mu\in \mathcal{C}_{r}(n)}M_{\mu}.
\end{equation}
\end{lemma}
\begin{proof}
By \eqref{isotypical-subspaces-add} and the definition of $I_{\mu}M$, we see that for each $\mu=(\mu_{1},\ldots,\mu_{r})\in \mathcal{C}_{r}(n),$
$M_{\mu}$ is the sum of all simple $\mathbb{K}T$-submodules of $M$ isomorphic to $V_{j_1}\otimes\cdots\otimes V_{j_{n}}$ for all $j_1, \ldots, j_n$ such that $\#\{1\leq k\leq n\:|\:j_{k}=i\}=\mu_{i}$ for each $1\leq i\leq r.$ Note that $M$ is semisimple as a $\mathbb{K}T$-module. This, together with the fact that $\{V_{i_1}\otimes\cdots\otimes V_{i_{n}}\:|\:1\leq i_{1},\ldots,i_{n}\leq r\}$ is a complete set of pairwise non-isomorphic simple $\mathbb{K}T$-modules, gives rise to the decomposition \eqref{the-decomposition-aaa}.
\end{proof}
\subsection{An equivalence of two module categories}
Recall the description of affine Hecke algebras due to J. Bernstein (see [Lu1-2]). For each $m\in \mathbb{Z}_{\geq 1},$ the \emph{extended affine Hecke algebra} $\widehat{\mathcal{H}}_{m}$ of type $A$ is a $\mathbb{K}$-algebra generated by elements $T_{i},$ $Y_{j}^{\pm 1}$ (with $1\leq i\leq m-1$ and $1\leq j\leq m$) subject to the following relations: \vskip1mm

(1) $(T_{i}+1)(T_{i}-q)=0~ \text{ for }1\leq i\leq m-1;$

(2) $T_{i}T_{i+1}T_{i}=T_{i+1}T_{i}T_{i+1} ~ \text{ for }1\leq i\leq m-2;$

(3) $T_{i}T_{j}=T_{j}T_{i}~ \text{ for }1\leq i,j\leq m-1$ such that $|i-j|\geq 2;$

(4) $Y_{i}Y_{i}^{-1}=Y_{i}^{-1}Y_{i}=1,\quad Y_{i}Y_{j}=Y_{j}Y_{i} ~\text{ for}~1\leq i,j\leq m;$

(5) $T_{i}Y_{i}T_{i}=q Y_{i+1}~\text{ for}~1\leq i\leq m-1;$

(6) $T_{i}Y_{j}=Y_{j}T_{i}~\text{ for}~1\leq i\leq m-1,$ $1\leq j\leq m$ such that $j\neq i, i+1.$\vskip1mm

We assume that $\widehat{\mathcal{H}}_{0}=\mathbb{K}.$ Let $w\in \mathfrak{S}_{m}$ and let $w=s_{i_1}\cdots s_{i_{k}}$ be a reduced expression of $w.$ It is well known that the element $T_{w} :=T_{i_1}T_{i_2}\cdots T_{i_{k}}$ is well-defined.

For each $\mu=(\mu_{1},\ldots,\mu_{r})\in \mathcal{C}_{r}(n),$ we denote by $\widehat{\mathcal{H}}_{r,\mu}$ the subalgebra of $\widehat{\mathcal{H}}_{n}$ generated by $T_{i},$ $Y_{j}^{\pm 1}$ with $i\in \{1,2,\ldots,n-1\}\backslash\{\mu_{1},\mu_1+\mu_2,\ldots,\mu_1+\cdots+\mu_{r-1}\}$ and $1\leq j\leq n$. Note that $\widehat{\mathcal{H}}_{r,\mu}$ is naturally isomorphic to the tensor product $\widehat{\mathcal{H}}_{\mu_{1}}\otimes\cdots\otimes\widehat{\mathcal{H}}_{\mu_{r}}$ and we shall identify them. We define the following algebra:
$$\widehat{\mathcal{H}}_{r,n} :=\bigoplus_{\mu\in \mathcal{C}_{r}(n)}\widehat{\mathcal{H}}_{r,\mu}.$$

In the following we will see that the strategy in \cite[Section 3]{WW} can also be applied to our setting, even though the quadratic relations for the generators $g_i$ of $\widehat{Y}_{r,n}$ in \eqref{rel-def-Y1} look quite different from \cite[(2.4-2.5)]{WW}. The key observation is that the elements $e_k$ act as either zero or the identity on a simple $\mathbb{K}T$-module due to Lemma \ref{weig-e-0}.

\begin{proposition}\label{weig-e-3}
Assume that $\mu=(\mu_{1},\ldots,\mu_{r})\in \mathcal{C}_{r}(n)$ and $N\in \widehat{Y}_{r,\mu}$-$\mathbf{mod}.$ Then $\mathrm{Hom}_{\mathbb{K}T}(V(\mu), N)$ is an $\widehat{\mathcal{H}}_{r,\mu}$-module with the action given by
\begin{equation}\label{tw-vmu-yk-vum}\begin{array}{rcl}
(T_{w}\diamond \phi)(v_\mu)\hspace*{-7pt}&=&\hspace*{-7pt}g_{w}\phi(v_\mu),\\[0.4em]
(Y_{k}^{\pm1}\diamond \phi)(v_\mu)\hspace*{-7pt}&=&\hspace*{-7pt}X_{k}^{\pm1}\phi(v_\mu)
\end{array}
\end{equation}
for $w\in \mathfrak{S}_{\mu},$ $1\leq k\leq n$ and $\phi\in \mathrm{Hom}_{\mathbb{K}T}(V(\mu), N).$ Thus, $\mathrm{Hom}_{\mathbb{K}T}(V(\mu), -)$ is a functor from $\widehat{Y}_{r,\mu}$-$\mathbf{mod}$ to $\widehat{\mathcal{H}}_{r,\mu}$-$\mathbf{mod}.$
\end{proposition}
\begin{proof}
We first show that $T_{w}\diamond \phi$ is a $\mathbb{K}T$-module homomorphism. It suffices to consider each $T_{i}\diamond \phi$ for $i\in \{1,2,\ldots,n-1\}\backslash\{\mu_{1},\mu_1+\mu_2,\ldots,\mu_1+\cdots+\mu_{r-1}\}.$ Observe that  we have, for each $1\leq j\leq n,$
\begin{align*}
(T_{i}\diamond \phi)(t_{j}(v_\mu))&=(T_{i}\diamond \phi)(t_{s_{i}(j)}(v_\mu))\\
&=g_{i}\phi(t_{s_{i}(j)}(v_\mu))\\
&=g_{i}t_{s_{i}(j)}\phi(v_\mu)\\
&=t_{j}(T_{i}\diamond \phi)(v_\mu).
\end{align*}
The fact that $Y_{k}^{\pm1}\diamond \phi$ is a $\mathbb{K}T$-module homomorphism can be proved similarly.

By Lemma \ref{weig-e-0}, we see that $e_{i},$ for each $i\neq \mu_1,\mu_1+\mu_2,\ldots,\mu_1+\cdots+\mu_{r-1}$, acts on $V(\mu)$ as the identity. Then it is easy to verify that the actions given in \eqref{tw-vmu-yk-vum} satisfy the relations for the $\widehat{\mathcal{H}}_{r,\mu}$-module structure on $\mathrm{Hom}_{\mathbb{K}T}(V(\mu), N).$ We skip the details and leave them to the reader.
\end{proof}

\begin{proposition}\label{weig-e-4}
Assume that $\mu=(\mu_{1},\ldots,\mu_{r})\in \mathcal{C}_{r}(n)$ and $M$ is a finite dimensional $\widehat{\mathcal{H}}_{r,\mu}$-module. Then $V(\mu)\otimes M$ affords a $\widehat{Y}_{r,\mu}$-module via
\begin{equation}\label{tk-gw-xkvmu-vum}\begin{array}{rclcl}
t_{k}\ast(v_\mu\otimes z)\hspace*{-7pt}&=&\hspace*{-7pt}t_{k}(v_\mu)\otimes z, \\[0.4em]
g_{w}\ast(v_\mu\otimes z)\hspace*{-7pt}&=&\hspace*{-7pt}v_\mu\otimes T_{w}z, \\[0.4em]
X_{k}^{\pm1}\ast(v_\mu\otimes z)\hspace*{-7pt}&=&\hspace*{-7pt}v_\mu\otimes Y_{k}^{\pm1}z
\end{array}
\end{equation}
for $1\leq k\leq n,$ $w\in \mathfrak{S}_{\mu}$ and $z\in M.$ There exists an isomorphism of $\widehat{\mathcal{H}}_{r,\mu}$-modules $\Phi: M\rightarrow \mathrm{Hom}_{\mathbb{K}T}(V(\mu), V(\mu)\otimes M)$ given by $\Phi(z)(v_\mu)=v_\mu\otimes z.$ Moreover, $V(\mu)\otimes M$ is a simple $\widehat{Y}_{r,\mu}$-module if and only if $M$ is a simple $\widehat{\mathcal{H}}_{r,\mu}$-module.
\end{proposition}
\begin{proof}
In order to verify that $V(\mu)\otimes M$ is a $\widehat{Y}_{r,\mu}$-module with the actions of its generators given in \eqref{tk-gw-xkvmu-vum}, we need to check that the actions satisfy the relations listed in \eqref{rel-def-Y1} and \eqref{rel-def-Y333-daadaaa}. We only check the quadratic relation for each $g_{i}$ with $i\in \{1,2,\ldots,n-1\}\backslash\{\mu_{1},\mu_1+\mu_2,\ldots,\mu_1+\cdots+\mu_{r-1}\}$ and leave the remaining verifications to the reader.

For each $z\in M,$ we have $g_{i}^{2}\ast(v_\mu\otimes z)=v_\mu\otimes T_{i}^{2}z=v_\mu\otimes (q+(q-1)T_{i})z,$ while
$(q+(q-1)e_{i}g_{i})\ast(v_\mu\otimes z)=qv_\mu\otimes z+(q-1)e_{i}v_\mu\otimes T_{i}z.$
Note that $e_{i}v_\mu=v_\mu$ by Lemma \ref{weig-e-0} and then the quadratic relation in \eqref{rel-def-Y1} holds for each $g_{i}$ with $i\in \{1,2,\ldots,n-1\}\backslash\{\mu_{1},\mu_1+\mu_2,\ldots,\mu_1+\cdots+\mu_{r-1}\}.$

It is easy to see that $\Phi$ is a well-defined injective $\widehat{\mathcal{H}}_{r,\mu}$-module homomorphism. Moreover, as a $\mathbb{K}T$-module, $V(\mu)\otimes M$ is isomorphic to a direct sum of finite copies of $V(\mu).$ Thus, $\Phi$ is an isomorphism by comparing dimensions of these two modules.

Assume that $V(\mu)\otimes M$ is a simple $\widehat{Y}_{r,\mu}$-module and $E$ is a nonzero $\widehat{\mathcal{H}}_{r,\mu}$-submodule of $M.$ Then $V(\mu)\otimes E$ is a nonzero $\widehat{Y}_{r,\mu}$-submodule of $V(\mu)\otimes M,$ which implies $E=M.$ Conversely, assume that $M$ is a simple $\widehat{\mathcal{H}}_{r,\mu}$-module and $P$ is a nonzero $\widehat{Y}_{r,\mu}$-submodule of $V(\mu)\otimes M.$ By Proposition \ref{weig-e-3}, $\mathrm{Hom}_{\mathbb{K}T}(V(\mu), P)$ is a nonzero $\widehat{\mathcal{H}}_{r,\mu}$-submodule of $\mathrm{Hom}_{\mathbb{K}T}(V(\mu), V(\mu)\otimes M)\cong M.$ Since $M$ is simple, we have $\mathrm{Hom}_{\mathbb{K}T}(V(\mu), P)\cong M.$ Note that $P,$ as a $\mathbb{K}T$-module, is isomorphic to a direct sum of finite copies of $V(\mu).$ Hence, we must have $P=V(\mu)\otimes M$ by a dimension comparison.
\end{proof}

\begin{proposition}\label{weig-e-5}
Assume that $N\in \widehat{Y}_{r,n}$-$\mathbf{mod}.$ Then for each $\mu\in \mathcal{C}_{r}(n),$
\begin{align*}
\Psi: V(\mu)\otimes \mathrm{Hom}_{\mathbb{K}T}(V(\mu), I_{\mu}N)&\longrightarrow I_{\mu}N,\\
v_\mu\otimes \psi&\mapsto \psi(v_\mu
)
\end{align*}
defines an isomorphism of $\widehat{Y}_{r,\mu}$-modules.
\end{proposition}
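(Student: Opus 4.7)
The plan is to verify that $\Psi$ is a well-defined $\widehat{Y}_{r,\mu}^{\mathbb{K}}$-module homomorphism between two modules (the codomain is a $\widehat{Y}_{r,\mu}^{\mathbb{K}}$-module by Lemma \ref{weig-e-1}, the domain is one by Propositions \ref{weig-e-3} and \ref{weig-e-4}), and then establish bijectivity, either by a direct dimension count or by recognizing $\Psi$ as the inverse to the isomorphism $\Phi$ of Proposition \ref{weig-e-4} applied to $M=\mathrm{Hom}_{\mathbb{K}T}(V(\mu), I_\mu N)$. Since $V(\mu)=\mathbb{K}v_\mu$ is one-dimensional, the formula $\Psi(v_\mu\otimes\psi)=\psi(v_\mu)$ extended $\mathbb{K}$-linearly in the first slot unambiguously defines a linear map.

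To check the homomorphism property, it suffices to verify compatibility with the generators $t_k$, $X_k^{\pm1}$ ($1\leq k\leq n$), and $g_w$ ($w\in\mathfrak{S}_\mu$) acting on a typical element $v_\mu\otimes\psi$. For $g_w$, using the definition of the $\widehat{\mathcal{H}}_{r,\mu}^{\mathbb{K}}$-action from Proposition \ref{weig-e-3},
\[
\Psi(g_w\ast(v_\mu\otimes\psi))=\Psi(v_\mu\otimes T_w\diamond\psi)=(T_w\diamond\psi)(v_\mu)=g_w\psi(v_\mu)=g_w\Psi(v_\mu\otimes\psi).
\]
Compatibility with $t_k$ uses that $\psi$ is a $\mathbb{K}T$-homomorphism, namely $\Psi(t_k\ast(v_\mu\otimes\psi))=\psi(t_k v_\mu)=t_k\psi(v_\mu)$, and the check for $X_k^{\pm1}$ is identical in form to that of $g_w$, appealing to the $X_k^{\pm1}$-action defined in Proposition \ref{weig-e-3}.

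For bijectivity, I would exploit that $I_\mu N$ is by definition the $V(\mu)$-isotypical component of $N$ as a $\mathbb{K}T$-module; since $\dim V(\mu)=1$, writing $m=\dim\mathrm{Hom}_{\mathbb{K}T}(V(\mu), I_\mu N)$ yields $I_\mu N\cong V(\mu)^{\oplus m}$ and hence $\dim(V(\mu)\otimes\mathrm{Hom}_{\mathbb{K}T}(V(\mu),I_\mu N))=m=\dim I_\mu N$. Surjectivity then suffices and is direct: for any $w\in I_\mu N$, choose a $\mathbb{K}T$-decomposition $I_\mu N=\bigoplus_i\mathbb{K}w_i$ with each $\mathbb{K}w_i\cong V(\mu)$, write $w=\sum c_i w_i$, and let $\psi_i\in\mathrm{Hom}_{\mathbb{K}T}(V(\mu),I_\mu N)$ be defined by $\psi_i(v_\mu)=w_i$; then $\Psi\bigl(\sum c_i\,v_\mu\otimes\psi_i\bigr)=w$. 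The only genuinely non-trivial step is the verification of the homomorphism property for $g_w$, but that is essentially built into the construction of the functor in Proposition \ref{weig-e-3}, where Lemma \ref{weig-e-0} guarantees that $e_i$ acts as the identity on $V(\mu)$ for $i\in I_\mu$, so the defining quadratic relation of $\widehat{\mathcal{H}}_{r,\mu}^{\mathbb{K}}$ matches the restriction of the relation $g_i^2=1+(q-q^{-1})e_ig_i$ from $\widehat{Y}_{r,\mu}^{\mathbb{K}}$.
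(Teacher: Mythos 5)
Your proof is correct and follows essentially the same route as the paper: both sides are $\widehat{Y}_{r,\mu}^{\mathbb{K}}$-modules by Lemma \ref{weig-e-1} and Propositions \ref{weig-e-3}--\ref{weig-e-4}, $\Psi$ is checked to be a homomorphism on generators, and surjectivity plus a dimension count (using that $I_\mu N$ is a direct sum of copies of the one-dimensional $V(\mu)$) gives the isomorphism. You merely spell out the generator-by-generator verification and the explicit preimage construction that the paper leaves as ``easily checked.''
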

\begin{proof}
By Lemma \ref{weig-e-1}, $I_{\mu}N$ is a $\widehat{Y}_{r,\mu}$-module. It follows from Propositions \ref{weig-e-3} and \ref{weig-e-4} that $V(\mu)\otimes \mathrm{Hom}_{\mathbb{K}T}(V(\mu), I_{\mu}N)$ is a $\widehat{Y}_{r,\mu}$-module.

It is easy to check that $\Psi$ is a $\widehat{Y}_{r,\mu}$-homomorphism. Since $I_{\mu}N,$ as a $\mathbb{K}T$-module, is isomorphic to a direct sum of finite copies of $V(\mu),$ we see that $\Psi$ is surjective. Hence $\Psi$ is an isomorphism by a dimension comparison.
\end{proof}

Now we can prove one of the main results of this paper.
\begin{theorem}\label{weig-e-6}
The functor $\mathcal{F}: \widehat{Y}_{r,n}$-$\mathbf{mod}$ $\rightarrow$ $\widehat{\mathcal{H}}_{r,n}$-$\mathbf{mod}$ defined by
$$\mathcal{F}(N)=\bigoplus_{\mu\in \mathcal{C}_{r}(n)}\mathrm{Hom}_{\mathbb{K}T}(V(\mu), I_{\mu}N)$$
is an equivalence of categories with the inverse $\mathcal{G}: \widehat{\mathcal{H}}_{r,n}$-$\mathbf{mod}$ $\rightarrow$ $\widehat{Y}_{r,n}$-$\mathbf{mod}$ given by $$\mathcal{G}(\oplus_{\mu\in \mathcal{C}_{r}(n)}P_{\mu})=\bigoplus_{\mu\in \mathcal{C}_{r}(n)}\mathrm{Ind}_{\widehat{Y}_{r,\mu}}^{\widehat{Y}_{r,n}}(V(\mu)\otimes P_{\mu}).$$
\end{theorem}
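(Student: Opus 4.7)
The plan is to verify that $\mathcal{F}$ and $\mathcal{G}$ are additive functors and to exhibit natural isomorphisms $\mathcal{G}\circ \mathcal{F}\cong \mathrm{id}$ and $\mathcal{F}\circ \mathcal{G}\cong \mathrm{id}$. Functoriality of $\mathcal{F}$ is immediate from Proposition~\ref{weig-e-3} applied summand by summand, and functoriality of $\mathcal{G}$ follows from Proposition~\ref{weig-e-4} composed with induction.

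For $\mathcal{G}\mathcal{F}(N)\cong N$ I would simply chain the earlier results. Writing $P_\mu:=\mathrm{Hom}_{\mathbb{K}T}(V(\mu), I_\mu N)$, Proposition~\ref{weig-e-5} gives $V(\mu)\otimes P_\mu\cong I_\mu N$ as $\widehat{Y}_{r,\mu}^{\mathbb{K}}$-modules; inducing and invoking Lemma~\ref{weig-e-1} yields $\mathrm{Ind}_{\widehat{Y}_{r,\mu}^{\mathbb{K}}}^{\widehat{Y}_{r,n}^{\mathbb{K}}}(V(\mu)\otimes P_\mu)\cong N_\mu$; summing over $\mu$ and applying Lemma~\ref{weig-e-2} yields $N=\bigoplus_\mu N_\mu$. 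Naturality in $N$ is clear since every step is canonical.

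The reverse direction is more delicate: for $\bigoplus_\nu P_\nu\in \widehat{\mathcal{H}}_{r,n}^{\mathbb{K}}$-$\mathbf{mod}$ the key step is to prove
$$I_\mu\,\mathrm{Ind}_{\widehat{Y}_{r,\nu}^{\mathbb{K}}}^{\widehat{Y}_{r,n}^{\mathbb{K}}}(V(\nu)\otimes P_\nu)\;\cong\;\delta_{\mu,\nu}\,V(\mu)\otimes P_\mu.$$
Choosing $\mathcal{O}(\nu)$ to consist of minimum-length left coset representatives, so that $g_w=g_\tau g_{w'}$ whenever $w=\tau w'$ with $\tau\in \mathcal{O}(\nu)$ and $w'\in \mathfrak{S}_\nu$, I would first establish the right-module decomposition $\widehat{Y}_{r,n}^{\mathbb{K}}=\bigoplus_\tau g_\tau\widehat{Y}_{r,\nu}^{\mathbb{K}}$, so the induced module becomes $\bigoplus_\tau g_\tau\otimes(V(\nu)\otimes P_\nu)$. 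The identity $t_j g_\tau=g_\tau t_{\tau^{-1}(j)}$ then shows that, as a $\mathbb{K}T$-module, each summand $g_\tau\otimes(V(\nu)\otimes P_\nu)$ is a direct sum of copies of the one-dimensional $\mathbb{K}T$-module on which $t_j$ acts by the same scalar as $t_{\tau^{-1}(j)}$ acts on $V(\nu)$. Since the characters $\chi_1,\ldots,\chi_r$ of $V_1,\ldots,V_r$ are pairwise distinct, comparing characters forces this simple module to be isomorphic to $V(\mu)$ exactly when $\nu=\mu$ and $\tau\in \mathfrak{S}_\mu\cap \mathcal{O}(\mu)=\{e\}$, which gives the displayed identity. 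The isomorphism $\Phi$ from Proposition~\ref{weig-e-4} then identifies $\mathrm{Hom}_{\mathbb{K}T}(V(\mu),V(\mu)\otimes P_\mu)\cong P_\mu$, completing $\mathcal{F}\mathcal{G}\cong \mathrm{id}$.

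The main obstacle is the right-module coset decomposition $\widehat{Y}_{r,n}^{\mathbb{K}}=\bigoplus_\tau g_\tau\widehat{Y}_{r,\nu}^{\mathbb{K}}$, since $X_k$ and $t_k$ do not commute past $g_\tau$. To close this gap I would rewrite a general PBW monomial $X^\alpha t g_\tau g_{w'}$, using Lemma~\ref{gtX-commutator}, as $g_\tau({}^{\tau^{-1}}t)X^{\tau^{-1}\alpha}g_{w'}$ plus a sum of strictly lower Bruhat terms $g_u(\cdots)g_{w'}$ (each of which lies in $g_\sigma\widehat{Y}_{r,\nu}^{\mathbb{K}}$ for the minimum-length $\sigma\leq u<\tau$), iterate on Bruhat order, and close the argument via the PBW dimension count of Theorem~\ref{PBW}.
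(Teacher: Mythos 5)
Your proof is correct and follows essentially the same route as the paper, whose own proof merely cites the naturality of $\Phi$ and $\Psi$ together with Lemmas~\ref{weig-e-1}--\ref{weig-e-2} and Propositions~\ref{weig-e-3}--\ref{weig-e-5} and leaves the verification to the reader. The extra work you supply for $\mathcal{F}\mathcal{G}\cong\mathrm{id}$ --- the right-coset freeness $\widehat{Y}_{r,n}^{\mathbb{K}}=\bigoplus_{\tau\in\mathcal{O}(\nu)}g_\tau\widehat{Y}_{r,\nu}^{\mathbb{K}}$ via Lemma~\ref{gtX-commutator} and triangularity against the PBW basis of Theorem~\ref{PBW}, and the resulting computation $I_\mu\,\mathrm{Ind}(V(\nu)\otimes P_\nu)\cong\delta_{\mu,\nu}\,V(\mu)\otimes P_\mu$ --- is precisely the detail the paper's ``one can easily check'' elides, and you carry it out correctly.
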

\begin{proof}
Observe that the map $\Phi$ in Proposition \ref{weig-e-4} is natural in $M$ and $\Psi$ in Proposition \ref{weig-e-5} is natural in $N.$ Then one can easily verify that $\mathcal{F}\mathcal{G}\cong \mathrm{id}$ and $\mathcal{G}\mathcal{F}\cong \mathrm{id}$ by using Lemmas \ref{weig-e-1} and \ref{weig-e-2}, and Propositions \ref{weig-e-3}-\ref{weig-e-5}.
\end{proof}

\section{Simple $\widehat{Y}_{r,n}$-modules and modular branching rules for $\widehat{Y}_{r,n}$}
In this section, we shall present three applications of the equivalence of module categories established in Theorem \ref{weig-e-6}. We shall classify all finite dimensional simple $\widehat{Y}_{r,n}$-modules, and establish the modular branching rule for $\widehat{Y}_{r,n}$ which provides a description of the socle of the restriction to $\widehat{Y}_{r,(n-1,1)}$ of a simple $\widehat{Y}_{r,n}$-module. We also give a block decomposition of $\widehat{Y}_{r,n}$-$\mathbf{mod}.$

\subsection{Simple $\widehat{Y}_{r,n}$-modules}
\begin{theorem}\label{simple-mod-1}
Each simple $\widehat{Y}_{r,n}$-module is isomorphic to a module of the form
$$S_{\mu}(L.) :=\mathrm{Ind}_{\widehat{Y}_{r,\mu}}^{\widehat{Y}_{r,n}}\big(V(\mu)\otimes (L_{1}\otimes\cdots\otimes L_{r})\big),$$
where $\mu=(\mu_{1},\ldots,\mu_{r})\in \mathcal{C}_{r}(n)$ and $L_{k}$ $(1\leq k\leq r)$ is a simple $\widehat{\mathcal{H}}_{\mu_{k}}$-module. Moreover, the above modules $S_{\mu}(L.),$ with $\mu=(\mu_{1},\ldots,\mu_{r})$ running through $\mathcal{C}_{r}(n)$ and $L_{k}$ $(1\leq k\leq r)$ running through all non-isomorphic finite dimensional simple $\widehat{\mathcal{H}}_{\mu_{k}}$-modules, form a complete set of pairwise non-isomorphic simple $\widehat{Y}_{r,n}$-modules.
\end{theorem}
\begin{proof}
It follows from the category equivalence established in Theorem \ref{weig-e-6}.
\end{proof}

Let $e$ be the smallest positive integer such that $q^{e}=1;$ set $e=\infty$ if no such integer exists.

\begin{remark}
Ariki and Mathas have given the classification of the simple modules of an extended affine Hecke algebra of type $A$ over an arbitrary field in terms of aperiodic multisegments. In particular, the non-isomorphic simple $\widehat{\mathcal{H}}_{m}$-modules are indexed by the set $\mathcal{M}_{e}^{m}(\mathbb{K})$ (see \cite[Theorem B(i)]{AM} for more details). Combining this with Theorem \ref{simple-mod-1}, we see that the simple $\widehat{Y}_{r,n}$-modules are indexed by the following set $$\mathcal{A} :=\big\{(\mu, \psi_{1},\ldots,\psi_{r})\:|\:\mu=(\mu_{1},\ldots,\mu_{r})\in \mathcal{C}_{r}(n)\text{ and }\psi_{i}\in \mathcal{M}_{e}^{\mu_{i}}(\mathbb{K})\text{ for }1\leq i\leq r\big\}.$$
\end{remark}

\subsection{Modular branching rules for $\widehat{Y}_{r,n}$}
We refer the modular branching rule to the determination of simple $\widehat{Y}_{r,(n-1,1)}$-modules appearing in the restriction of
a simple $\widehat{Y}_{r,n}$-module to the subalgebra $\widehat{Y}_{r,(n-1,1)}$ as well as their multiplicities, as an analog of the case of symmetric groups and affine Hecke algebras of type $A$ (see \cite{Kle1} and \cite{GV}). We remark that due to the appearance of the cyclic group $G$, it is natural to consider the restriction to the subalgebra $\widehat{Y}_{r,(n-1,1)}$ rather than $\widehat{Y}_{r,n-1}$ as we shall see that the multiplicity-free property holds.

For each $a\in \mathbb{K}^{*}$ and $M\in \widehat{\mathcal{H}}_{m}$-$\mathbf{mod},$ let $\Delta_{a}(M)$ be the \emph{generalized} $a$-\emph{eigenspace} of $Y_{m}$ in $\mathrm{Res}_{\widehat{\mathcal{H}}_{m-1,1}}^{\widehat{\mathcal{H}}_{m}}M,$ where $\widehat{\mathcal{H}}_{m-1,1}$ is the subalgebra of $\widehat{\mathcal{H}}_{m}$ generated by $T_{i},$ $Y_{j}^{\pm 1}$ (with $1\leq i\leq m-2$ and $1\leq j\leq m$). Since $Y_{m}-a$ is a central element in $\widehat{\mathcal{H}}_{m-1,1},$ $\Delta_{a}(M)$ is an $\widehat{\mathcal{H}}_{m-1,1}$-submodule of $\mathrm{Res}_{\widehat{\mathcal{H}}_{m-1,1}}^{\widehat{\mathcal{H}}_{m}}M.$ We set $$e_{a}M :=\mathrm{Res}_{\widehat{\mathcal{H}}_{m-1}}^{\widehat{\mathcal{H}}_{m-1,1}}\Delta_{a}(M).$$
Then we have $$\mathrm{Res}_{\widehat{\mathcal{H}}_{m-1}}^{\widehat{\mathcal{H}}_{m}}M=\bigoplus_{a\in \mathbb{K}^{*}}e_{a}M.$$
We denote the \emph{socle} of the $\widehat{\mathcal{H}}_{m-1}$-module $e_{a}M$ by $$\tilde{e}_{a}M :=\mathrm{Soc}(e_{a}M).$$

The following modular branching rule for $\widehat{\mathcal{H}}_{m}$ is due to Grojnowski and Vazirani.
\begin{proposition} {\rm (See \cite[Theorems (A) and (B)]{GV}.)}\label{GV}
Assume that $M$ is a simple $\widehat{\mathcal{H}}_{m}$-module and $a\in \mathbb{K}^{*}.$ Then either $\tilde{e}_{a}M=0$ or $\tilde{e}_{a}M$ is simple. Moreover, the socle of $\mathrm{Res}_{\widehat{\mathcal{H}}_{m-1}}^{\widehat{\mathcal{H}}_{m}}M$ is multiplicity free.
\end{proposition}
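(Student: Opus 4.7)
The plan is to follow the strategy of Grojnowski--Vazirani, whose central device is an adjoint ``$i$-induction'' functor. First I would introduce
\[
f_a : \widehat{\mathcal{H}}_{n-1}^{\mathbb{K}}\text{-}\mathbf{mod} \longrightarrow \widehat{\mathcal{H}}_n^{\mathbb{K}}\text{-}\mathbf{mod}, \qquad f_a N := \mathrm{Ind}_{\widehat{\mathcal{H}}_{n-1,1}^{\mathbb{K}}}^{\widehat{\mathcal{H}}_n^{\mathbb{K}}}\bigl(N \boxtimes \mathbb{K}_a\bigr),
\]
where $\mathbb{K}_a$ is the one-dimensional $\widehat{\mathcal{H}}_1^{\mathbb{K}}$-module with $Y_1$ acting by $a$. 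Since $Y_n - a$ is central in $\widehat{\mathcal{H}}_{n-1,1}^{\mathbb{K}}$, the block decomposition $\mathrm{Res}\,M = \bigoplus_{b \in \mathbb{K}^*} e_b M$ by generalized $Y_n$-eigenvalues is exact, and Frobenius reciprocity combined with this decomposition yields the adjunction $\mathrm{Hom}_{\widehat{\mathcal{H}}_n^{\mathbb{K}}}(f_a N, M) \cong \mathrm{Hom}_{\widehat{\mathcal{H}}_{n-1}^{\mathbb{K}}}(N, e_a M)$.

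The core step is to prove that, for any simple $\widehat{\mathcal{H}}_{n-1}^{\mathbb{K}}$-module $L$, the endomorphism ring $\mathrm{End}_{\widehat{\mathcal{H}}_n^{\mathbb{K}}}(f_a L)$ is local. I would establish this through a Mackey-style filtration on $\mathrm{Res}_{\widehat{\mathcal{H}}_{n-1,1}^{\mathbb{K}}}^{\widehat{\mathcal{H}}_n^{\mathbb{K}}} f_a L$, whose associated graded is governed by the double coset representatives of $\mathfrak{S}_{n-1}$ in $\mathfrak{S}_n$; tracking where the last-strand generator $Y_n$ has generalized eigenvalue $a$ shows that only the ``top'' subquotient isomorphic to $L \boxtimes \mathbb{K}_a$ contributes to $\Delta_a(f_a L)$, and it does so with multiplicity one. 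Invoking the adjunction, a self-endomorphism of $f_a L$ is determined by its restriction to this one-dimensional slice modulo the nilpotent action of $Y_n - a$, forcing $\mathrm{End}(f_a L)$ to be a cyclic local algebra.

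From locality of the endomorphism ring, $f_a L$ has a unique simple head, and by the standard antiinvolution on $\widehat{\mathcal{H}}_n^{\mathbb{K}}$ (which swaps head with socle and preserves $f_a L$ up to isomorphism), also a unique simple socle which coincides with the head. Running the adjunction backward gives, for each simple $\widehat{\mathcal{H}}_n^{\mathbb{K}}$-module $M$,
\[
\mathrm{Hom}_{\widehat{\mathcal{H}}_{n-1}^{\mathbb{K}}}(L, \mathrm{Soc}(e_a M)) \cong \mathrm{Hom}_{\widehat{\mathcal{H}}_{n-1}^{\mathbb{K}}}(L, e_a M) \cong \mathrm{Hom}_{\widehat{\mathcal{H}}_n^{\mathbb{K}}}(f_a L, M),
\]
which is nonzero for at most one simple $L$ (namely when $M$ is the simple head of $f_a L$) and in that case is one-dimensional. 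Hence $\tilde{e}_a M = \mathrm{Soc}(e_a M)$ is either zero or simple, and occurs with multiplicity one in $e_a M$.

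The multiplicity-freeness of $\mathrm{Soc}(\mathrm{Res}_{\widehat{\mathcal{H}}_{n-1}^{\mathbb{K}}}^{\widehat{\mathcal{H}}_n^{\mathbb{K}}} M)$ then follows by assembling the pieces: $\mathrm{Soc}(\mathrm{Res}\,M) = \bigoplus_{a \in \mathbb{K}^*} \tilde{e}_a M$, and a simple submodule of $e_a M$ cannot be isomorphic to one of $e_b M$ for $b \neq a$ because $Y_n$ acts by distinct generalized eigenvalues on the two. I expect the main obstacle to be the Mackey-filtration computation establishing that $\mathrm{End}(f_a L)$ is local; this requires a delicate analysis of how the affine generators $Y_j$ interact with induction along the non-parabolic shuffle of strands, and is the technical heart of the Grojnowski--Vazirani argument.
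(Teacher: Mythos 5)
The paper does not prove this proposition: it is imported verbatim from Grojnowski--Vazirani ([GV, Theorems (A) and (B)]) and used as a black box, so there is no internal proof to compare against. Judged as a reconstruction of the [GV] argument, your skeleton is the right one --- the adjoint pair $(f_a, e_a)$, locality of $\mathrm{End}_{\widehat{\mathcal{H}}_n^{\mathbb{K}}}(f_aL)$, the antiinvolution to pass from heads to socles, and separation of the summands $\tilde{e}_aM$ for distinct $a$ by central characters of $\widehat{\mathcal{H}}_{n-1}^{\mathbb{K}}$ (your phrasing ``$Y_n$ acts by distinct generalized eigenvalues'' should really be ``the multisets $\gamma\setminus\{a\}$ and $\gamma\setminus\{b\}$ of $Y_1,\ldots,Y_{n-1}$-eigenvalues differ,'' since $Y_n$ does not act on an $\widehat{\mathcal{H}}_{n-1}^{\mathbb{K}}$-module).

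The genuine gap is in your core step. The claim that ``only the top subquotient isomorphic to $L\boxtimes\mathbb{K}_a$ contributes to $\Delta_a(f_aL)$, and it does so with multiplicity one'' is false. Take $n=2$ and $L=\mathbb{K}_a$: then $f_aL=\mathrm{Ind}(\mathbb{K}_a\boxtimes\mathbb{K}_a)$ is two-dimensional, the central character forces the eigenvalue multiset of $(Y_1,Y_2)$ on every constituent to be $\{a,a\}$, so $\Delta_a(f_aL)=f_aL$ contains $L\boxtimes\mathbb{K}_a$ with multiplicity two. In general the lower Mackey piece contributes whenever $a$ already occurs in the $Y$-spectrum of $L$, and the correct statement --- which is the technical heart of [GV] --- is that $L$ occurs in $e_af_aL$ with multiplicity $\varepsilon_a(L)+1$ where $\varepsilon_a(L)=\max\{k:e_a^kL\neq 0\}$, and that $\mathrm{End}(f_aL)\cong\mathbb{K}[x]/(x^{\varepsilon_a(L)+1})$ with $x$ the nilpotent part of the action of $Y_n$. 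Your own conclusion that the endomorphism ring is a \emph{cyclic local} algebra is inconsistent with the multiplicity-one claim (which would make it one-dimensional), and the phrase ``modulo the nilpotent action of $Y_n-a$'' is carrying the entire unproved content: one must show that every self-endomorphism is a polynomial in $Y_n$, which is exactly the dimension count $\dim\mathrm{Hom}(L,e_af_aL)=\varepsilon_a(L)+1$ that your filtration analysis, as stated, gets wrong.
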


We state the following result, which will be used in the proof of Lemma \ref{branch-rule-2}.
\begin{lemma}\label{branch-rule-1}
Suppose that $\mu=(\mu_{1},\ldots,\mu_{r})\in \mathcal{C}_{r}(n)$ and  let $L_{k}$ $(1\leq k\leq r)$ be a simple $\widehat{\mathcal{H}}_{\mu_{k}}$-module. Then for each $\tau\in \mathfrak{S}_{r},$ we have
\begin{equation}\label{isomorphism-add-add}
\mathrm{Ind}_{\widehat{Y}_{r,\mu}}^{\widehat{Y}_{r,n}}\big(V(\mu)\otimes (L_{1}\otimes\cdots\otimes L_{r})\big)
\cong\mathrm{Ind}_{\widehat{Y}_{r,\tau(\mu)}}^{\widehat{Y}_{r,n}}\big(V_{\tau(1)}^{\otimes \mu_{\tau(1)}}\otimes\cdots\otimes V_{\tau(r)}^{\otimes \mu_{\tau(r)}}\otimes (L_{\tau(1)}\otimes\cdots\otimes L_{\tau(r)})\big),
\end{equation}
where $\tau(\mu)=(\mu_{\tau(1)},\ldots,\mu_{\tau(r)}).$
\end{lemma}
\begin{proof}
We denote the left-hand side and the right-hand side of \eqref{isomorphism-add-add} by $\mathrm{L}$ and $\mathrm{R},$ respectively. In order to prove that $\mathrm{L}\cong \mathrm{R},$ it suffices to show that $\mathcal{F}(\mathrm{L})\cong \mathcal{F}(\mathrm{R})$ by Theorem \ref{weig-e-6}. On the one hand, for any $\nu\in \mathcal{C}_{r}(n)$ with $\nu\neq\mu,$ we have $I_{\nu}\mathrm{L}=I_{\nu}\mathrm{R}=0,$ and hence $\mathrm{Hom}_{\mathbb{K}T}(V(\nu), I_{\nu}\mathrm{L})=\mathrm{Hom}_{\mathbb{K}T}(V(\nu), I_{\nu}\mathrm{R})=0.$ On the other hand, by Lemma \ref{weig-e-1} and Proposition \ref{weig-e-4} we have the following isomorphisms: $$\mathrm{Hom}_{\mathbb{K}T}(V(\mu), I_{\mu}\mathrm{L})\cong L_{1}\otimes\cdots\otimes L_{r}\cong\mathrm{Hom}_{\mathbb{K}T}(V(\mu), I_{\mu}\mathrm{R}).$$ Thus, we have proved this lemma.
\end{proof}

Assume that $\mu=(\mu_{1},\ldots,\mu_{r})\in \mathcal{C}_{r}(n).$ For each $1\leq i\leq r,$ we denote by $$\mu_{i}^{-}=(\mu_{1},\ldots,\mu_{i}-1,\ldots,\mu_{r})\quad\text{and}\quad\mu_{i}^{+}=(\mu_{1},\ldots,\mu_{i}+1,\ldots,\mu_{r})$$ the $r$-compositions of $n\mp1$ associated with $\mu,$ respectively. In the following, we shall assume that the terms involving $\mu_{i}^{-}$ are zero if $\mu_{i}=0$ for some $i.$

Recall that $\widehat{Y}_{r,(n-1,1)}$ is the subalgebra of $\widehat{Y}_{r,n}$ generated by $t_1,\ldots,t_{n},$ $X_{1}^{\pm1},\ldots,X_{n}^{\pm1}$ and $g_{w}$ for all $w\in \mathfrak{S}_{n-1}$. Then we have $\widehat{Y}_{r,(n-1,1)}\cong \widehat{Y}_{r,n-1}\otimes\widehat{Y}_{r,1},$ and we shall identify the two algebras in the following.


\begin{lemma}\label{branch-rule-2}
Suppose that $\mu=(\mu_{1},\ldots,\mu_{r})\in \mathcal{C}_{r}(n)$ and $L_{k}$ $(1\leq k\leq r)$ is a simple $\widehat{\mathcal{H}}_{\mu_{k}}$-module. Then we have
$$\mathrm{Res}_{\widehat{Y}_{r,(n-1,1)}}^{\widehat{Y}_{r,n}}S_{\mu}(L.)\cong \bigoplus_{a\in \mathbb{K}^{*}; 1\leq k\leq r} S_{\mu_{k}^{-}}(e_{a}L.)\otimes (V_{k}\otimes L(a)),$$
where $L(a)$ is the one-dimensional $\mathbb{K}[X^{\pm1}]$-module with $X^{\pm1}$ acting as the scalar $a^{\pm1}$ and $S_{\mu_{k}^{-}}(e_{a}L.)$ denotes the $\widehat{Y}_{r,n-1}$-module
\begin{equation}\label{Smuk-}
\mathrm{Ind}_{\widehat{Y}_{r,\mu_{k}^{-}}}^{\widehat{Y}_{r,n-1}}\big(V(\mu_{k}^{-})\otimes (L_{1}\otimes\cdots\otimes e_{a}L_{k}\otimes\cdots\otimes L_{r})\big).
\end{equation}
\end{lemma}
\begin{proof}
For each $1\leq k\leq r$ such that $\mu_k\neq 0$, there exists $\tau\in\mathfrak{S}_r$ such that $\tau(i)=i$ for $1\leq i\leq k-1,$ $\tau(k+j)=k+j+1$ for $0\leq j\leq r-k-1$ and $\tau(r)=k.$ Then $\tau(\mu)=(\mu_1,\ldots,\mu_{k-1},\mu_{k+1},\ldots,\mu_r,\mu_k).$ By Lemma \ref{branch-rule-1} we have
\begin{align*}
S_{\mu}(L.)\cong & \mathrm{Ind}_{\widehat{Y}_{r,\tau(\mu)}}^{\widehat{Y}_{r,n}}\big((V_{\tau(1)}^{\otimes \mu_{\tau(1)}}\otimes\cdots\otimes V_{\tau(r)}^{\otimes \mu_{\tau(r)}})\otimes (L_{\tau(1)}\otimes\cdots\otimes L_{\tau(r)})\big)\\
= & \mathrm{Ind}_{\widehat{Y}_{r,\tau(\mu)}}^{\widehat{Y}_{r,n}}\big((V_{\tau(1)}^{\otimes \mu_{\tau(1)}}\otimes\cdots\otimes V_{\tau(r-1)}^{\otimes \mu_{\tau(r-1)}}\otimes V_k^{\otimes \mu_k})\otimes (L_{\tau(1)}\otimes\cdots\otimes L_{\tau(r-1)}\otimes L_k)\big)
\end{align*}
By definition, it is easy to see that
$$\bigg(\hspace{-0.5mm}\mathrm{Ind}_{\widehat{Y}_{r,\tau(\mu)^-}}^{\widehat{Y}_{r,n-1}}\hspace{-1mm}\big(V_{\tau(1)}^{\otimes \mu_{\tau(1)}}\hspace{-1mm}\otimes\cdots\otimes V_{\tau(r-1)}^{\otimes \mu_{\tau(r-1)}}\hspace{-1mm}\otimes V_k^{\otimes\mu_k-1}\otimes (L_{\tau(1)}\otimes\cdots\otimes L_{\tau(r-1)}\otimes e_aL_k)\big)\hspace{-1mm}\bigg)\otimes\big(V_k\otimes L(a))$$
is isomorphic to a $\widehat{Y}_{r,(n-1,1)}$-submodule of $\mathrm{Res}_{\widehat{Y}_{r,(n-1,1)}}^{\widehat{Y}_{r,n}}S_{\mu}(L.)$ for all $a\in \mathbb{K}^{*},$ where $\tau(\mu)^-=(\mu_1,\ldots,\mu_{k-1},\mu_{k+1},\ldots,\mu_r,\mu_k-1).$ Meanwhile by a proof similar to Lemma \ref{branch-rule-1} we can show
\begin{align*}
\mathrm{Ind}_{\widehat{Y}_{r,\tau(\mu)^-}}^{\widehat{Y}_{r,n}}\hspace{-1.5mm}\big(V_{\tau(1)}^{\otimes \mu_{\tau(1)}}\hspace{-1mm}\otimes\cdots\otimes V_{\tau(r-1)}^{\otimes \mu_{\tau(r-1)}}\hspace{-1mm}\otimes V_k^{\otimes\mu_k-1}\hspace{-1mm}\otimes (L_{\tau(1)}\hspace{-0.5mm}\otimes\cdots\otimes \hspace{-0.5mm}L_{\tau(r-1)}\hspace{-0.5mm}\otimes\hspace{-0.5mm} e_aL_k)\big)
\hspace{-0.5mm}\cong\hspace{-0.5mm} S_{\mu_{k}^{-}}(e_{a}L.)
\end{align*}
Putting together, we obtain that $S_{\mu_{k}^{-}}(e_{a}L.)\otimes (V_{k}\otimes L(a))$ is a $\widehat{Y}_{r,(n-1,1)}$-submodule of $\mathrm{Res}_{\widehat{Y}_{r,(n-1,1)}}^{\widehat{Y}_{r,n}}S_{\mu}(L.)$ for each $a\in \mathbb{K}^{*}$ and $1\leq k\leq r,$ and hence we have
\begin{equation}\label{inclusion-yh-algs}
\sum_{a\in \mathbb{K}^{*}; 1\leq k\leq r}S_{\mu_{k}^{-}}(e_{a}L.)\otimes (V_{k}\otimes L(a))\subseteq \mathrm{Res}_{\widehat{Y}_{r,(n-1,1)}}^{\widehat{Y}_{r,n}}S_{\mu}(L.).\end{equation}
Since $V_{k}\otimes L(a)$ are pairwise non-isomorphic simple $\widehat{Y}_{r,1}$-modules for distinct pairs $(k,a),$ the above sum is in fact a direct sum. This lemma now follows from a dimension comparison. In fact, the dimensions of two sides of \eqref{inclusion-yh-algs} are both $$\dim{V(\mu)}\cdot \prod_{k=1}^{r}\dim{L_{k}}\cdot \frac{n!}{\mu_{1}!\mu_{2}!\cdots \mu_{r}!}.$$
\end{proof}

Now we can establish the modular branching rules for $\widehat{Y}_{r,n}.$
\begin{theorem}\label{branch-rule-3}
Consider the simple $\widehat{Y}_{r,n}$-module $S_{\mu}(L.)$ defined in Theorem \ref{simple-mod-1}. Then we have $$\mathrm{Soc}(\mathrm{Res}_{\widehat{Y}_{r,(n-1,1)}}^{\widehat{Y}_{r,n}}S_{\mu}(L.))\cong \bigoplus_{a\in \mathbb{K}^{*}; 1\leq k\leq r} S_{\mu_{k}^{-}}(\tilde{e}_{a}L.)\otimes (V_{k}\otimes L(a)),$$
where $S_{\mu_{k}^{-}}(\tilde{e}_{a}L.)$ denotes the $\widehat{Y}_{r,n-1}$-module
$$\mathrm{Ind}_{\widehat{Y}_{r,\mu_{k}^{-}}}^{\widehat{Y}_{r,n-1}}\big(V(\mu_{k}^{-})\otimes (L_{1}\otimes\cdots\otimes \tilde{e}_{a}L_{k}\otimes\cdots\otimes L_{r})\big).$$
\end{theorem}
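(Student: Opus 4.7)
The plan is to combine Lemma \ref{branch-rule-2}, the category equivalence of Theorem \ref{weig-e-6}, and the Grojnowski--Vazirani modular branching rule for the affine Hecke algebra recalled above.

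First I would apply Lemma \ref{branch-rule-2} to the simple modules $L_1,\ldots,L_r$, giving
$$\mathrm{Res}_{\widehat{Y}_{r,n-1,1}^{\mathbb{K}}}^{\widehat{Y}_{r,n}^{\mathbb{K}}}S_{\mu}(L.)\cong \bigoplus_{a\in \mathbb{K}^{*},\, 1\leq k\leq r} S_{\mu_{k}^{-}}(e_{a}L.)\otimes (V_{k}\otimes L(a)).$$
Since the socle functor commutes with finite direct sums, it suffices to compute $\mathrm{Soc}\bigl(S_{\mu_{k}^{-}}(e_{a}L.)\otimes (V_{k}\otimes L(a))\bigr)$ for each fixed $(a,k)$. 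Because $V_k\otimes L(a)$ is one-dimensional, hence absolutely simple over $\widehat{Y}_{r,1}^{\mathbb{K}}$, and $\widehat{Y}_{r,n-1,1}^{\mathbb{K}}\cong \widehat{Y}_{r,n-1}^{\mathbb{K}}\otimes\widehat{Y}_{r,1}^{\mathbb{K}}$ with $\mathbb{K}$ algebraically closed, this socle equals $\mathrm{Soc}\bigl(S_{\mu_{k}^{-}}(e_{a}L.)\bigr)\otimes (V_{k}\otimes L(a))$.

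Next I would compute $\mathrm{Soc}\bigl(S_{\mu_{k}^{-}}(e_{a}L.)\bigr)$ by transporting the problem to the affine Hecke side through the equivalence $\mathcal{F}$ of Theorem \ref{weig-e-6}, which preserves socles. Under $\mathcal{F}$ the module $S_{\mu_k^-}(e_aL.)$ corresponds, in the $\mu_k^-$ summand of $\widehat{\mathcal{H}}_{r,n-1}^{\mathbb{K}}$, to the outer tensor product $L_1\otimes\cdots\otimes e_aL_k\otimes\cdots\otimes L_r$ over $\widehat{\mathcal{H}}_{\mu_1}^{\mathbb{K}}\otimes\cdots\otimes \widehat{\mathcal{H}}_{\mu_k-1}^{\mathbb{K}}\otimes\cdots\otimes \widehat{\mathcal{H}}_{\mu_r}^{\mathbb{K}}$. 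Since each $L_j$ for $j\neq k$ is absolutely simple, every simple submodule of this tensor product has the form $L_1\otimes\cdots\otimes S\otimes\cdots\otimes L_r$ for a simple submodule $S$ of $e_aL_k$, so the socle equals $L_1\otimes\cdots\otimes\mathrm{Soc}(e_aL_k)\otimes\cdots\otimes L_r$. The Grojnowski--Vazirani proposition recalled above then identifies $\mathrm{Soc}(e_aL_k)=\tilde{e}_aL_k$, which is either zero or simple.

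Applying the inverse functor $\mathcal{G}$, this socle is transported back to the $\widehat{Y}_{r,n-1}^{\mathbb{K}}$-module $S_{\mu_k^-}(\tilde{e}_aL.)$, which by Theorem \ref{simple-mod-1} is zero or simple (only the nonzero summands survive in the direct sum). Assembling the contributions for all pairs $(a,k)$ yields the asserted formula. I expect the main obstacle to be justifying the tensor-product socle identity: one must verify carefully that, when all but one tensor factor is absolutely simple, the socle of the outer tensor product over a tensor product of algebras coincides with the outer tensor product of the socles. I would handle this by a direct submodule-counting argument, using that the outer tensor product of absolutely simple modules over an algebraically closed field remains simple over the tensor product algebra.
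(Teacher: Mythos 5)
Your proposal is correct and follows the paper's own route: the paper likewise derives the theorem from Lemma \ref{branch-rule-2} together with the single observation that $\mathrm{Soc}\bigl(S_{\mu_{k}^{-}}(e_{a}L.)\bigr)=S_{\mu_{k}^{-}}(\tilde{e}_{a}L.)$, which the paper states without proof and which you justify by transporting through the equivalence $\mathcal{F}$ and invoking Grojnowski--Vazirani. Your extra care about the socle of an outer tensor product with absolutely simple cofactors is a legitimate filling-in of that observation rather than a different argument.
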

\begin{proof}
For $1\leq k\leq r,$ recall that $S_{\mu_{k}^{-}}(e_{a}L.)$ has been defined via \eqref{Smuk-}. Suppose that $M$ is a simple $\widehat{Y}_{r,n-1}$-submodule of $S_{\mu_{k}^{-}}(e_{a}L.)$. By Theorem \ref{simple-mod-1}, we can assume
$M=S_{\lambda}(J.)=\mathrm{Ind}_{\widehat{Y}_{r,\lambda}}^{\widehat{Y}_{r,n-1}}\big(V(\lambda)\otimes (J_{1}\otimes\cdots\otimes J_{r})\big)$ with $\lambda=(\lambda_1,\ldots,\lambda_r)$ being a composition of $n-1$ and $J_k$ $(1\leq k\leq r)$ being a simple $\widehat{\mathcal{H}}_{\lambda_{k}}$-module. Then by Frobenius reciprocity (see \cite[Chapter II, $\S5.1$]{Bo}), there exists a nonzero $\widehat{Y}_{r,\lambda}$-homomorphism
from $V(\lambda)\otimes (J_{1}\otimes\cdots\otimes J_{r})$ to $\mathrm{Res}^{\widehat{Y}_{r,n-1}}_{\widehat{Y}_{r,\lambda}}S_{\mu_{k}^{-}}(e_{a}L.).$ By considering the decompositions of them into direct sum of simple modules as $\mathbb{K}T$-modules,
we can deduce that $\lambda=\mu_k^-$ and $V(\lambda)\otimes (J_{1}\otimes\cdots\otimes J_{r})$ is isomorphic to a $\widehat{Y}_{r,\lambda}$-submodule of
$V(\lambda)\otimes (L_1\otimes\cdots\otimes e_aL_k\otimes\cdots\otimes L_r).$ Hence $J_i\cong L_i$ for $i\neq k$ and $J_k$
is isomorphic to an $\widehat{H}_{\mu_k-1}$-submodule of $e_aL_k$. We must have $J_k\cong \tilde{e}_aL_k$ by Proposition \ref{GV}. Therefore, the socle of the $\widehat{Y}_{r,n-1}$-module $S_{\mu_{k}^{-}}(e_{a}L.)$ is $S_{\mu_{k}^{-}}(\tilde{e}_{a}L.).$ The theorem follows from Lemma \ref{branch-rule-2}.
\end{proof}

\subsection{A block decomposition}
In this subsection we fix a module $M$ in $\widehat{Y}_{r,n}$-$\mathbf{mod}.$ We shall give a decomposition of $M$ following the approach in \cite[Sections 4.1 and 4.2]{Kle2}.

For any $\underline{s}=(s_1,\ldots,s_{n})\in (\mathbb{K}^{*})^{n},$ let $M_{\underline{s}}$ be the simultaneous generalized eigenspace of the commutative invertible elements $X_{1},\ldots,X_{n}$ acting on $M$ with eigenvalues $s_1,\ldots,s_{n}.$ Then as a $P_{n}$-module, we have $$M=\bigoplus_{\underline{s}\in (\mathbb{K}^{*})^{n}}M_{\underline{s}}.$$


Set $\Lambda_{n} :=\mathbb{K}[X_{1}^{\pm1},\ldots,X_{n}^{\pm1}]^{\mathfrak{S}_{n}}.$ Associated with each $\underline{s}\in (\mathbb{K}^{*})^{n},$ we define a one-dimensional representation of $\Lambda_{n}$ by
$$\omega_{\underline{s}}: \Lambda_{n}\rightarrow \mathbb{K},\quad f(X_{1}^{\pm1},\ldots,X_{n}^{\pm1})\mapsto f(s_{1}^{\pm1},\ldots,s_{n}^{\pm1}).$$
If $\underline{s}$ and $\underline{t}$ lie in the same $\mathfrak{S}_{n}$-orbit, we write $\underline{s}\sim \underline{t}$. Note that $\underline{s}\sim \underline{t}$ if and only if $\omega_{\underline{s}}=\omega_{\underline{t}}.$ For each orbit $\gamma\in (\mathbb{K}^{*})^{n}/\sim,$ we set $\omega_{\gamma} :=\omega_{\underline{s}}$ for any $\underline{s}\in \gamma.$ Then $\omega_{\gamma}$ is well-defined. Set $$M[\gamma] :=\big\{m\in M\:|\:(z-\omega_{\gamma}(z))^{N}m=0~\mathrm{for~all~}z\in \Lambda_{n}~\mathrm{and}~N\gg 0\big\}.$$
Then we have $$M[\gamma]=\bigoplus_{\underline{s}\in \gamma}M_{\underline{s}}.$$

Since $\Lambda_{n}$ lies in the center of $\widehat{Y}_{r,n}$ by Theorem \ref{center}, $M[\gamma]$ is a $\widehat{Y}_{r,n}$-module. Moreover, we have the following decomposition in $\widehat{Y}_{r,n}$-$\mathbf{mod}$:
\begin{equation}\label{mgamma}
M=\bigoplus_{\gamma\in (\mathbb{K}^{*})^{n}/\sim}M[\gamma].
\end{equation}
Recall the decomposition in Lemma \ref{weig-e-2}. For each $\mu\in \mathcal{C}_{r}(n)$ and $\gamma\in (\mathbb{K}^{*})^{n}/\sim,$ we set $M[\mu, \gamma] :=M_{\mu}\cap M[\gamma].$ Since $X_{1}^{\pm1},\ldots,X_{n}^{\pm1}$ commute with $t_1,\ldots,t_{n},$ we have $M[\mu, \gamma]=(M_{\mu})[\gamma]=(M[\gamma])_{\mu}.$ Combining this with Lemma \ref{weig-e-2} and (\ref{mgamma}), we get the following decomposition in $\widehat{Y}_{r,n}$-$\mathbf{mod}$:
\begin{equation}\label{mgammamu}
M=\bigoplus_{\mu\in \mathcal{C}_{r}(n); \gamma\in (\mathbb{K}^{*})^{n}/\sim}M[\mu, \gamma].
\end{equation}

The decomposition \eqref{mgammamu} provides us a block decomposition of $\widehat{Y}_{r,n}$-$\mathbf{mod}$ by applying Theorem \ref{weig-e-6} and the block decomposition for the extended affine Hecke algebra $\widehat{\mathcal{H}}_{m}$ over an algebraically closed field; see \cite[Proposition 4.4]{Gr} and also \cite[Theorem 2.15]{LM}.

\section{Cyclotomic Yokonuma-Hecke algebras and Morita equivalences}
In this section, for a cyclotomic Yokonuma-Hecke algebra $Y_{r,n}^{\lambda}$ (see \eqref{cyclotomic-yhc-alge-defini}), we establish an explicit equivalence between the category $Y_{r,n}^{\lambda}$-$\mathbf{mod}$ of finite dimensional $Y_{r,n}^{\lambda}$-modules and the category $\mathcal{H}_{r,n}^{\lambda}$-$\mathbf{mod}$ of finite dimensional $\mathcal{H}_{r,n}^{\lambda}$-modules, where $\mathcal{H}_{r,n}^{\lambda}$ is a direct sum of tensor products of various Ariki-Koike algebras (see \eqref{hrn-hlambda-add-yh-dda}). The category equivalence plays a crucial role in Section 6.


\subsection{Cyclotomic Yokonuma-Hecke algebras}
Set $\mathbb{I} :=\{q^{i}\:|\:i\in \mathbb{Z}\}.$ For a $\widehat{Y}_{r,n}$-module $M,$ we call $M$ \emph{integral} if it is finite dimensional and all eigenvalues of $X_{1},\ldots,X_{n}$ acting on it belong to the set $\mathbb{I}.$ We denote by $\widehat{Y}_{r,n}$-$\mathbf{mod}_{\mathbb{I}}$ the full subcategory of $\widehat{Y}_{r,n}$-$\mathbf{mod}$ consisting of all integral $\widehat{Y}_{r,n}$-modules. Similarly, we can define integral $\widehat{\mathcal{H}}_{n}$-modules and its subcategory $\widehat{\mathcal{H}}_{n}$-$\mathbf{mod}_{\mathbb{I}}.$ It is explained in \cite[Remark 1]{Va} that to understand $\widehat{\mathcal{H}}_{n}$-$\mathbf{mod}$, it suffices to understand $\widehat{\mathcal{H}}_{n}$-$\mathbf{mod}_{\mathbb{I}},$ that is, the study of simple modules of $\widehat{\mathcal{H}}_{n}$ can be reduced to that of integral simple $\widehat{\mathcal{H}}_{n}$-modules. Then by Theorem \ref{weig-e-6}, it suffices to study simple objects in $\widehat{Y}_{r,n}$-$\mathbf{mod}_{\mathbb{I}}$ in order to study simple $\widehat{Y}_{r,n}$-modules.


Now we introduce the following intertwining elements in $\widehat{Y}_{r,n}$:
$$\Theta_{i} :=g_{i}(1-X_{i}X_{i+1}^{-1})+(1-q)e_{i}\quad \text{for }1\leq i\leq n-1.$$
\begin{lemma}\label{intertwining} For each $1\leq i\leq n-1,$ we have
\begin{equation}\label{theta-i-2}
\Theta_{i}^{2}=(1-q)^{2}(e_{i}-1)+(1-qX_{i}X_{i+1}^{-1})(1-qX_{i+1}X_{i}^{-1});\end{equation}
\begin{equation}\label{theta-i-x}
\Theta_{i}X_{i}=X_{i+1}\Theta_{i},\quad \Theta_{i}X_{i+1}=X_{i}\Theta_{i}\quad\text{and}\quad\Theta_{i}X_{j}=X_{j}\Theta_{i}~~\text{for}~j\neq i,i+1.
\end{equation}
\end{lemma}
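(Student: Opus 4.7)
The plan is to verify both assertions by direct computation from the relations already collected in Section~2; the essential inputs are \eqref{gxxg}, $e_i^2=e_i$, $e_ig_i=g_ie_i$, and \eqref{xyyx}. Writing $a:=X_iX_{i+1}^{-1}$, so that $\Theta_i=qg_i(1-a)+(1-q^2)e_i$, I first observe that $e_i$, being a polynomial in $t_i$ and $t_{i+1}$, commutes with every $X_j$ by \eqref{xyyx}, and in particular with $a$.

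For \eqref{theta-i-x}, the commutation $\Theta_iX_j=X_j\Theta_i$ for $j\neq i,i+1$ is immediate from \eqref{giXj} and \eqref{xyyx}. For $\Theta_iX_i=X_{i+1}\Theta_i$, I would push $X_i$ past $(1-a)$ (they commute) and then apply the first identity of \eqref{gxxg}; since $X_{i+1}(1-a)=X_{i+1}-X_i$, the discrepancy $\Theta_iX_i-X_{i+1}\Theta_i$ collapses to $(1-q^2)e_i(X_{i+1}-X_i)+(1-q^2)(X_i-X_{i+1})e_i$, which vanishes because $e_i$ commutes with $X_i$ and $X_{i+1}$. The twin relation $\Theta_iX_{i+1}=X_i\Theta_i$ is treated identically with the second identity of \eqref{gxxg}.

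For \eqref{theta-i-2}, I would expand $\Theta_i^2$ and use $e_ig_i=g_ie_i$ together with $[e_i,a]=0$ to collapse the two cross terms into the single summand $2q(1-q^2)\,e_ig_i(1-a)$. The core task is then to simplify $g_i(1-a)g_i=g_i^2-g_iag_i$. Applying \eqref{gxxg} twice to $g_iX_iX_{i+1}^{-1}g_i$, first on $g_iX_i$ and then on $g_iX_{i+1}^{-1}$, the $e_i$-type corrections telescope to give $g_iag_i=a^{-1}-(q-q^{-1})e_ig_i$, so that, using $g_i^2=1+(q-q^{-1})e_ig_i$,
\[
g_i(1-a)g_i=1-a^{-1}+2(q-q^{-1})e_ig_i.
\]

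The decisive cancellation is that $2q^2(q-q^{-1})+2q(1-q^2)=0$, which makes the total $e_ig_i(1-a)$ contribution in $\Theta_i^2$ vanish and leaves $\Theta_i^2=q^2(1-a^{-1})(1-a)+(1-q^2)^2e_i$. Expanding $(1-q^2a)(1-q^2a^{-1})=1-q^2(a+a^{-1})+q^4$ then matches this with the stated right-hand side $(1-q^2)^2(e_i-1)+(1-q^2X_iX_{i+1}^{-1})(1-q^2X_{i+1}X_i^{-1})$. The only real obstacle is the careful bookkeeping in the double application of \eqref{gxxg} to $g_iag_i$, where the two $(q-q^{-1})e_i$-corrections produced at each step must be checked to telescope correctly; no structural ingredient beyond Section~2 is required.
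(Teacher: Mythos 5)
Your computation is correct and follows essentially the same route as the paper: a direct expansion of $\Theta_i^2$ and of $\Theta_iX_i$, $\Theta_iX_{i+1}$ using the identities \eqref{gxxg} together with $g_i^2=1+(q-q^{-1})e_ig_i$, $e_ig_i=g_ie_i$, $e_i^2=e_i$, and the commutativity of $e_i$ with the $X_j$. All the intermediate identities you state (in particular $g_iX_iX_{i+1}^{-1}g_i=X_{i+1}X_i^{-1}-(q-q^{-1})e_ig_i$ and the cancellation $2q^2(q-q^{-1})+2q(1-q^2)=0$) check out.
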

\begin{proof}
By (\ref{gxxg}), we can prove these identities by a direct computation.
\begin{align*}
\Theta_{i}^{2}=&\big(g_{i}(1-X_{i}X_{i+1}^{-1})+(1-q)e_{i}\big)^{2}\\
=&g_{i}(1-X_{i}X_{i+1}^{-1})g_{i}(1-X_{i}X_{i+1}^{-1})+2(1-q)g_{i}e_{i}(1-X_{i}X_{i+1}^{-1})+(1-q)^{2}e_{i}^{2}\\
=&(q+(q-1)e_{i}g_{i})(1-X_{i}X_{i+1}^{-1})-g_{i}X_{i}(g_{i}X_{i}^{-1}-(q-1)e_{i}X_{i}^{-1})\\
&\times(1-X_{i}X_{i+1}^{-1})+2(1-q)g_{i}e_{i}(1-X_{i}X_{i+1}^{-1})+(1-q)^{2}e_{i}\\
=&q(1-X_{i}X_{i+1}^{-1})+(q-1)g_{i}e_{i}(1-X_{i}X_{i+1}^{-1})-qX_{i+1}X_{i}^{-1}(1-X_{i}X_{i+1}^{-1})\\
&+(q-1)g_{i}e_{i}(1-X_{i}X_{i+1}^{-1})+2(1-q)g_{i}e_{i}(1-X_{i}X_{i+1}^{-1})+(1-q)^{2}e_{i}\\
=&(1-q)^{2}(e_{i}-1)+(1-qX_{i}X_{i+1}^{-1})(1-qX_{i+1}X_{i}^{-1}).
\end{align*}
\begin{align*}
\Theta_{i}X_{i}&=\big(g_{i}(1-X_{i}X_{i+1}^{-1})+(1-q)e_{i}\big)X_{i}\\
&=(X_{i+1}g_{i}-(q-1)e_{i}X_{i+1})(1-X_{i}X_{i+1}^{-1})+(1-q)e_{i}X_{i}\\
&=X_{i+1}g_{i}(1-X_{i}X_{i+1}^{-1})-(q-1)e_{i}X_{i+1}+(q-1)e_{i}X_{i}+(1-q)e_{i}X_{i}\\
&=X_{i+1}\big(g_{i}(1-X_{i}X_{i+1}^{-1})+(1-q)e_{i}\big)\\
&=X_{i+1}\Theta_{i}.
\end{align*}
\begin{align*}
\Theta_{i}X_{i+1}&=\big(g_{i}(1-X_{i}X_{i+1}^{-1})+(1-q)e_{i}\big)X_{i+1}\\
&=(X_{i}g_{i}+(q-1)e_{i}X_{i+1})(1-X_{i}X_{i+1}^{-1})+(1-q)e_{i}X_{i+1}\\
&=X_{i}g_{i}(1-X_{i}X_{i+1}^{-1})+(q-1)e_{i}X_{i+1}-(q-1)e_{i}X_{i}+(1-q)e_{i}X_{i+1}\\
&=X_{i}\big(g_{i}(1-X_{i}X_{i+1}^{-1})+(1-q)e_{i}\big)\\
&=X_{i}\Theta_{i}.
\end{align*}
By (\ref{giXj}) and (\ref{xyyx}), we see that $\Theta_{i}X_{j}=X_{j}\Theta_{i}~\mathrm{for}~j\neq i,i+1.$
\end{proof}
\begin{lemma}\label{integral-2}
Fix $i$ with $1\leq i\leq n$ and let $M\in \widehat{Y}_{r,n}$-$\mathbf{mod}.$ Assume that all eigenvalues of $X_{i}$ on $M$ belong to $\mathbb{I}.$ Then $M$ is integral.
\end{lemma}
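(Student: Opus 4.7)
The plan is to propagate the hypothesis from $X_i$ outward to all the $X_j$ via the intertwining elements $\Theta_k$, using Lemma~\ref{intertwining} as the main computational tool. By induction on $|j-i|$, it suffices (by left--right symmetry) to prove: if every generalized eigenvalue of $X_i$ on $M$ lies in $\mathbb{I}$, then so does every generalized eigenvalue of $X_{i+1}$. Once this is established, one iterates to cover $X_{i+2},\dots,X_n$ (using $\Theta_{i+1},\Theta_{i+2},\dots$) and $X_{i-1},\dots,X_1$ (using $\Theta_{i-1},\dots,\Theta_1$).

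Suppose, toward a contradiction, that $a\in\mathbb{K}^{*}\setminus\mathbb{I}$ is a generalized eigenvalue of $X_{i+1}$. Since $X_1,\dots,X_n$ commute by \eqref{xyyx}, decompose $M$ into joint generalized eigenspaces $M(\underline{s})$ for the tuple $(X_1,\dots,X_n)$. The hypothesis forces $s_i\in\mathbb{I}$ whenever $M(\underline{s})\neq 0$. Pick any nonzero $V:=M(\underline{s})$ with $s_{i+1}=a$ and write $b:=s_i\in\mathbb{I}$. By \eqref{theta-i-x}, $\Theta_i$ carries $V$ into the joint generalized eigenspace whose $i$-th entry is $a$; since $a\notin\mathbb{I}$, this target space is zero, so $\Theta_i|_V=0$ and in particular $\Theta_i^{2}|_V=0$.

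Now apply \eqref{theta-i-2} together with the decomposition $V=e_iV\oplus(1-e_i)V$, which is legitimate because $e_i\in\mathbb{K}T$ commutes with each $X_j$ by \eqref{xyyx} and hence preserves $V$. Set $B:=(1-q^{2}X_iX_{i+1}^{-1})(1-q^{2}X_{i+1}X_i^{-1})$; on $V$ its unique generalized eigenvalue is $c:=(1-q^{2}b/a)(1-q^{2}a/b)$. On $e_iV$ one has $\Theta_i^{2}=B$, so $B=0$. If $c\neq 0$, then $B$ acts as an invertible operator on $e_iV$ (unique generalized eigenvalue $c\in\mathbb{K}^{*}$), forcing $e_iV=0$; if $c=0$, then $a\in\{q^{2}b,q^{-2}b\}\subset\mathbb{I}$, contradicting $a\notin\mathbb{I}$. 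On $(1-e_i)V$ one has $\Theta_i^{2}=B-(1-q^{2})^{2}$, with unique generalized eigenvalue $c-(1-q^{2})^{2}$; the same invertibility argument shows either $(1-e_i)V=0$ or $c=(1-q^{2})^{2}$. The latter simplifies to $a/b+b/a=2$, i.e.\ $(a-b)^{2}=0$, again forcing $a=b\in\mathbb{I}$, a contradiction. Therefore $V=0$, contradicting that $a$ is a generalized eigenvalue of $X_{i+1}$.

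The only delicate point is the passage from ``$\Theta_i^{2}=0$ on $V$'' to the identity $c=0$ (resp.\ $c=(1-q^{2})^{2}$); this uses the elementary fact that an operator on a finite-dimensional space with a single nonzero generalized eigenvalue is invertible, which I expect to be the main step requiring care, while the rest is a direct consequence of the intertwining relations and the factorisation of $\Theta_i^{2}$.
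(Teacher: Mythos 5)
Your argument is correct and follows essentially the same route as the paper's proof: propagate integrality between adjacent $X_k$'s using the intertwiners $\Theta_i$, splitting into the case $\Theta_i\neq 0$ (here handled by showing $\Theta_i$ must vanish on the offending generalized eigenspace) and the case $\Theta_i^{2}=0$, where the factorisation \eqref{theta-i-2} pins down the eigenvalue. The only cosmetic difference is that the paper picks a simultaneous eigenvector inside $I_{\mu}M$ and invokes Lemma~\ref{weig-e-0} to get $e_ku\in\{0,u\}$, whereas you split $V=e_iV\oplus(1-e_i)V$ directly and work with generalized eigenvalues; both are valid.
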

\begin{proof}
By assumption, we would like to show that the eigenvalues of $X_{k}$ on $M$ belong to $\mathbb{I}$ if and only if the eigenvalues of $X_{k+1}$ on $M$ belong to $\mathbb{I}$ for $1\leq k\leq n-1.$ By Lemmas \ref{weig-e-1} and \ref{weig-e-2}, it suffices to consider the subspaces $I_{\mu}M$ for all $\mu\in \mathcal{C}_{r}(n).$ Suppose that all eigenvalues of $X_{k+1}$ on $I_{\mu}M$ belong to $\mathbb{I}.$ Let $a$ be an eigenvalue of $X_{k}$ acting on $I_{\mu}M.$ Since $X_{k}$ and $X_{k+1}$ commute, we can choose an element $u$ from the $a$-eigenspace of $X_{k}$ so that $u$ is also an eigenvector of $X_{k+1}$ with an eigenvalue $b.$ By assumption, we have $b=q^{s}$ for some $s\in \mathbb{Z}.$


If $\Theta_{k}u\neq 0,$ then by (\ref{theta-i-x}) we have $X_{k+1}\Theta_{k}u=\Theta_{k}X_{k}u=a\Theta_{k}u.$ Thus, we see that $a$ is an eigenvalue of $X_{k+1},$ and hence $a\in \mathbb{I}$ by assumption. If $\Theta_{k}u=0,$ then by (\ref{theta-i-2}), we have $$(1-q)^{2}(e_{k}-1)u+(1-q^{1-s}a)(1-q^{1+s}a^{-1})u=0.$$
Since $I_{\mu}M$ is isomorphic to the direct sum of finite copies of $V_{1}^{\otimes \mu_{1}}\otimes\cdots\otimes V_{r}^{\mu_{r}},$ we have either $e_{k}u=0$ or $e_{k}u=u$ by Lemma \ref{weig-e-0}. Thus, we must have either $a=q^{s}$ or $a=q^{s\pm1},$ and hence $a\in \mathbb{I}$ again. It is similar to show that all eigenvalues of $X_{k+1}$ on $I_{\mu}M$ belong to $\mathbb{I}$ if we assume all eigenvalues of $X_{k}$ on $I_{\mu}M$ belong to $\mathbb{I}.$
\end{proof}

Recall that $e$ is the order of $q$ in $\mathbb{K}^{*}.$ Then $e\in \mathbb{Z}_{\geq 2}\cup \{\infty\}.$ If $e\in \mathbb{Z}_{\geq 2}$, we set $\mathbb{J}=\{0,1,\ldots,e-1\};$ otherwise, we set $\mathbb{J}=\mathbb{Z}.$
Let $$\Delta=\big\{\lambda=(\lambda_{i})_{i\in \mathbb{J}}\:|\:\lambda_{i}\in \mathbb{Z}_{\geq 0}~\mathrm{and~only~finitely~many}~\lambda_{i}~\mathrm{are~nonzero}\big\}.$$

For each $\lambda\in \Delta,$ set $$f_{\lambda}\equiv f_{\lambda}(X_{1}) :=\prod_{i\in \mathbb{J}}(X_{1}-q^{i})^{\lambda_{i}}.$$

For each $\lambda\in \Delta,$ we denote by $\mathcal{J}_{\lambda}$ the two-sided ideal of $\widehat{Y}_{r,n}$ generated by $f_{\lambda},$ and define the \emph{cyclotomic Yokonuma-Hecke algebra} $Y_{r,n}^{\lambda}$ by
\begin{equation}\label{cyclotomic-yhc-alge-defini}
Y_{r,n}^{\lambda} :=\widehat{Y}_{r,n}/\mathcal{J}_{\lambda}.
\end{equation}
\begin{lemma}\label{integral-3}
Assume that $M\in \widehat{Y}_{r,n}$-$\mathbf{mod}.$ Then $M$ is integral if and only if $\mathcal{J}_{\lambda}M=0$ for some $\lambda\in \Delta.$
\end{lemma}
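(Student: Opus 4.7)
The plan is to prove both directions by passing through the element $f_{\lambda}(X_1)$ and using Lemma~\ref{integral-2} to propagate information from $X_1$ to all of $X_1,\ldots,X_n$.

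For the easy direction, suppose $\mathcal{J}_{\lambda}M=0$ for some $\lambda\in\Delta$. Then in particular $f_{\lambda}(X_1)$ acts as zero on $M$, so the minimal polynomial of the operator $X_1$ on $M$ divides $f_{\lambda}(X_1)=\prod_{i\in\mathbb{J}}(X_1-q^i)^{\lambda_i}$. Every eigenvalue of $X_1$ on $M$ is therefore of the form $q^i$ with $i\in\mathbb{J}$, and in particular lies in $\mathbb{I}$. Lemma~\ref{integral-2} then upgrades this: all eigenvalues of every $X_k$ for $1\leq k\leq n$ lie in $\mathbb{I}$, so $M$ is integral.

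For the reverse direction, suppose $M$ is integral, so in particular $X_1$ acts on the finite dimensional space $M$ with all eigenvalues in $\mathbb{I}=\{q^i\mid i\in\mathbb{J}\}$. For each $i\in\mathbb{J}$, let $\lambda_i\in\mathbb{Z}_{\geq 0}$ be the size of the largest Jordan block of $X_1$ with eigenvalue $q^i$ (so $\lambda_i=0$ if $q^i$ is not an eigenvalue). Only finitely many $\lambda_i$ are nonzero because $M$ is finite dimensional, so $\lambda=(\lambda_i)_{i\in\mathbb{J}}\in\Delta$. By construction, $f_{\lambda}(X_1)=\prod_{i\in\mathbb{J}}(X_1-q^i)^{\lambda_i}$ annihilates every generalized eigenspace of $X_1$ on $M$, hence annihilates $M$ itself. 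That is, $f_{\lambda}(X_1)\cdot M=0$.

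The only remaining point, which is the main thing to verify carefully, is that annihilation by the single element $f_{\lambda}(X_1)$ implies annihilation by the entire two-sided ideal $\mathcal{J}_{\lambda}$; note that $f_{\lambda}(X_1)$ is not central in $\widehat{Y}_{r,n}^{\mathbb{K}}$, so this is not automatic from centrality. It is however immediate from the two-sided ideal description: for any $a,b\in\widehat{Y}_{r,n}^{\mathbb{K}}$ and $m\in M$, the element $bm$ again lies in $M$, so $f_{\lambda}(X_1)\cdot(bm)=0$, and therefore $(af_{\lambda}(X_1)b)\cdot m=a\cdot 0=0$. Hence $\mathcal{J}_{\lambda}M=\widehat{Y}_{r,n}^{\mathbb{K}}\,f_{\lambda}\,\widehat{Y}_{r,n}^{\mathbb{K}}\cdot M\subseteq\widehat{Y}_{r,n}^{\mathbb{K}}\,f_{\lambda}\cdot M=0$, completing the proof. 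No substantial obstacle arises; Lemma~\ref{integral-2} does all the nontrivial work of passing from one coordinate to all $n$ coordinates.
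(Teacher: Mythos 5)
Your proposal is correct and follows essentially the same route as the paper: the forward direction reads off the eigenvalues of $X_1$ from $f_{\lambda}$ and invokes Lemma~\ref{integral-2}, and the converse takes $\lambda$ from the minimal polynomial of $X_1$ on $M$. Your extra paragraph verifying that annihilation by $f_{\lambda}(X_1)$ forces annihilation by the whole two-sided ideal is a correct elaboration of a step the paper treats as immediate.
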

\begin{proof}
If $\mathcal{J}_{\lambda}M=0,$ then all the eigenvalues of $X_{1}$ on $M$ belong to $\mathbb{I}.$ Hence $M$ is integral by Lemma \ref{integral-2}. Conversely, suppose that $M$ is integral. Then the minimal polynomial of $X_{1}$ on $M$ should be of the form $\prod_{i\in \mathbb{J}}(t-q^{i})^{\lambda_{i}}$ for some $\lambda_{i}\in \mathbb{Z}_{\geq 0}.$ Setting $\mathcal{J}_{\lambda}$ to be the two-sided ideal of $\widehat{Y}_{r,n}$ generated by $\prod_{i\in \mathbb{J}}(X_{1}-q^{i})^{\lambda_{i}},$ we have $\mathcal{J}_{\lambda}M=0.$
\end{proof}

For each $\lambda\in \Delta,$ we have a canonical surjective homomorphism $\widehat{Y}_{r,n}\rightarrow Y_{r,n}^{\lambda},$ via which we can identify $Y_{r,n}^{\lambda}$-$\mathbf{mod}$ with the full subcategory of $\widehat{Y}_{r,n}$-$\mathbf{mod}$ consisting of all modules $M$ with $\mathcal{J}_{\lambda}M=0.$ In order to study modules in the category $\widehat{Y}_{r,n}$-$\mathbf{mod}_{\mathbb{I}},$ it suffices to study modules in the category $Y_{r,n}^{\lambda}$-$\mathbf{mod}$ for all $\lambda\in \Delta$ by Lemma \ref{integral-3}.

The next proposition follows from \cite[Theorem 4.4]{ChP2}. In fact, we can adjust the statements in \cite[Section 7.5]{Kle2} to our setting and give a direct proof of the PBW basis theorem for $Y_{r,n}^{\lambda}$; see \cite[Section 2]{C} for more details. For each $\lambda=(\lambda_{i})_{i\in \mathbb{J}}\in \Delta,$ we set $|\lambda| :=\sum_{i\in \mathbb{J}}\lambda_{i}.$
\begin{proposition}\label{cyclotomic-pbw}
Suppose that $\lambda\in \Delta$. Then the following elements $$\big\{X^{\alpha}t^\beta g_{w}\:|\:\alpha=(\alpha_{1},\ldots,\alpha_{n})\in \mathbb{Z}^{n}~\mathrm{with}~0\leq\alpha_{1},\ldots,\alpha_{n}\leq |\lambda|-1, \beta\in \mathbb{Z}_r^n\text{ and }w\in \mathfrak{S}_{n}\big\}$$
form a $\mathbb{K}$-basis of $Y_{r,n}^{\lambda}.$
\end{proposition}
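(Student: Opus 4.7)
The plan is to prove that the given set both spans $Y_{r,n}^{\lambda,\mathbb{K}}$ and is linearly independent, starting from the affine PBW basis of Theorem~\ref{PBW}. The expected dimension count is $d^n \cdot r^n \cdot n!$, matching the cardinality of the proposed basis.

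For the spanning part, I would start from the fact that Theorem~\ref{PBW} already writes any element of $\widehat{Y}_{r,n}^{\mathbb{K}}$ as an $\mathbb{K}$-linear combination of $X^\alpha t g_w$ with $\alpha \in \mathbb{Z}^n$. Working modulo $\mathcal{J}_\lambda$, it suffices to show that each integer power of $X_i$ reduces to a $\mathbb{K}$-linear combination of $X_i^0, X_i^1, \ldots, X_i^{d-1}$, using that the $X_i$'s commute (by \eqref{xyyx}) and commute past $t \in T$. For $X_1$ this is immediate: the relation $f_\lambda(X_1) = 0$ expresses $X_1^d$ in terms of lower powers, and since the constant term $\prod_{i\in \mathbb{J}}(-q^i)^{\lambda_i}$ of $f_\lambda$ is a unit in $\mathbb{K}^*$, the relation also produces $X_1^{-1}$ as a polynomial in $X_1$ of degree $\leq d-1$, so every $X_1^{\alpha_1}$ reduces.

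The main obstacle is to push this cyclotomic relation from $X_1$ to each $X_i$ with $i \geq 2$, i.e.\ to show $f_\lambda(X_i) \in \mathcal{J}_\lambda$. The intended tool is the intertwining element $\Theta_j$ of Lemma~\ref{intertwining}, whose key property $\Theta_j X_j = X_{j+1} \Theta_j$ gives $\Theta_{i-1} \cdots \Theta_1 \cdot f_\lambda(X_1) = f_\lambda(X_i) \cdot \Theta_{i-1} \cdots \Theta_1$ in $\widehat{Y}_{r,n}^{\mathbb{K}}$, so that $f_\lambda(X_i)\,\Theta_{i-1}\cdots\Theta_1 \in \mathcal{J}_\lambda$. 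To cancel the intertwiner on the right, I would use formula \eqref{theta-i-2}; since $\Theta_j^2$ involves both the polynomial factor $(1-q^2 X_j X_{j+1}^{-1})(1-q^2 X_{j+1}X_j^{-1})$ and an $(e_j - 1)$ correction, the argument must be performed separately on the $e_j = 0$ and $e_j = \mathrm{id}$ isotypical components as provided by Lemma~\ref{weig-e-0} (equivalently, by the decomposition of $\widehat{Y}_{r,n}^{\mathbb{K}}$ along $\mathbb{K}T$-weight spaces). On each such component, $\Theta_j^2$ becomes a Laurent polynomial in $X_j, X_{j+1}$, and a right-multiplication / Bezout argument analogous to the affine Hecke algebra case (adapted from \cite[Sect.~7.5]{Kle2}) lets one conclude $f_\lambda(X_i) \in \mathcal{J}_\lambda$.

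For linear independence, I would mirror the strategy used in the proof of Theorem~\ref{PBW}: construct an explicit $Y_{r,n}^{\lambda,\mathbb{K}}$-module of dimension $d^n r^n n!$ on which the proposed basis elements act as linearly independent operators. The natural candidate is the quotient of $\mathbb{K}T \otimes_\mathbb{K} \mathbb{K}[Y_1^{\pm1},\ldots,Y_n^{\pm1}]$ modulo the ideal generated by $f_\lambda(Y_1)$, equipped with the $\widehat{Y}_{r,n}^{\mathbb{K}}$-action already constructed in the proof of Theorem~\ref{PBW}; one checks that this action descends to the cyclotomic quotient (since the $Y_1$-action satisfies $f_\lambda(Y_1) = 0$) and that the action on the image of $1 \otimes 1$ by monomials $X^\alpha t g_w$ with $0 \leq \alpha_i \leq d-1$ produces linearly independent vectors, by testing against an element $h \otimes Y_1^{a_1}\cdots Y_n^{a_n}$ with widely spaced exponents. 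Alternatively, the linear independence follows directly from \cite[Theorem~4.4]{ChP2}, which is the natural reference to cite for the full statement.
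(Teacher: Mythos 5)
Your fallback --- citing [ChP2, Theorem 4.4] --- is in fact all the paper does: its ``proof'' consists of the remark that the proposition follows from [ChP2, Theorem 4.4], together with a pointer to [C, Section 2] for a direct argument adapted from [Kle2, Section 7.5]. The direct argument you sketch, however, has a fatal error in the spanning step. The claim $f_{\lambda}(X_{i})\in\mathcal{J}_{\lambda}$ for $i\geq 2$ is false. Already for $n=2$ and $d=1$ with $f_{\lambda}(X_{1})=X_{1}-q^{i_{0}}$ one has, in the quotient, $X_{2}=g_{1}X_{1}g_{1}=q^{i_{0}}g_{1}^{2}=q^{i_{0}}\bigl(1+(q-q^{-1})e_{1}g_{1}\bigr)$, so $f_{\lambda}(X_{2})=q^{i_{0}}(q-q^{-1})e_{1}g_{1}$, which is a nonzero element of $Y_{r,2}^{\lambda,\mathbb{K}}$ whenever $q^{2}\neq 1$. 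What the basis statement actually encodes is only that $X_{i}^{d}$ lies in the span of the elements $X^{\alpha}tg_{w}$ with all $\alpha_{j}\leq d-1$: the reduction necessarily produces terms involving $g_{w}$ and the other variables, not a polynomial relation in $X_{i}$ alone. Accordingly, no cancellation of the intertwiners can rescue the argument --- from $f_{\lambda}(X_{i})\Theta_{i-1}\cdots\Theta_{1}\in\mathcal{J}_{\lambda}$ one cannot strip off the $\Theta$'s, since by \eqref{theta-i-2} the element $\Theta_{j}^{2}$ annihilates the weight spaces where $X_{j}X_{j+1}^{-1}$ acts by $q^{\pm 2}$ (on the $e_{j}=1$ component) or by $1$ (on the $e_{j}=0$ component), and the counterexample shows the desired conclusion is simply not available. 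The correct spanning argument is the inductive one of [Kle2, Section 7.5] and [C, Section 2]: one verifies directly, using \eqref{gxxg} and $X_{i+1}=g_{i}X_{i}g_{i}$, that the span of the proposed elements is stable under multiplication by the generators.

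The linear-independence construction is also not well defined as stated: the ideal of $\mathbb{K}[Y_{1}^{\pm1},\ldots,Y_{n}^{\pm1}]$ generated by $f_{\lambda}(Y_{1})$ alone is not stable under the $g_{j}$-action from the proof of Theorem \ref{PBW}, because that action permutes the variables and applies divided differences, producing $f_{\lambda}(Y_{j+1})$-terms that do not lie in $\bigl(f_{\lambda}(Y_{1})\bigr)$; so the action does not descend to your quotient. Passing instead to the $\mathfrak{S}_{n}$-stable ideal $\bigl(f_{\lambda}(Y_{1}),\ldots,f_{\lambda}(Y_{n})\bigr)$ yields a module of dimension $r^{n}d^{n}$, on which one must prove independence of $n!\,r^{n}d^{n}$ operators; the ``widely spaced exponents'' trick from Theorem \ref{PBW} is then unavailable, and this is exactly the genuinely hard half of every cyclotomic basis theorem. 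It is handled in [ChP2, Theorem 4.4] and [C, Section 2] by an inductive filtration/bimodule argument, which is the route you should take (or simply cite, as the paper does).
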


\subsection{A Morita equivalence}

Let $\mathfrak{S}_{n-1}'$ be the subgroup of $\mathfrak{S}_{n}$ generated by $s_{2},\ldots,s_{n-1}.$ For each $\mu=(\mu_{1},\ldots,\mu_{r})\in \mathcal{C}_{r}(n)$ and $0\leq k\leq r,$ we set $\bar{\mu}^{k} :=\mu_{1}+\cdots+\mu_{k},$ where $\bar{\mu}^{k}=0$ if $k=0.$ The next lemma follows from \cite[Proposition A.3.2]{Ze}.
\begin{lemma}\label{left-coset}{\rm (See \cite[Lemma 5.10]{WW}.)}
There exists a complete set $\mathcal{O}(\mu)$ of left coset representatives of $\mathfrak{S}_{\mu}$ in $\mathfrak{S}_{n}$ such that any $w\in \mathcal{O}(\mu)$ can be written as $\sigma(1,\bar{\mu}^{k}+1)$ for some $\sigma\in \mathfrak{S}_{n-1}'$ and $0\leq k\leq r-1.$ $($Here $(1,\bar{\mu}^{k}+1)=\mathrm{Id}$ if $\bar{\mu}^{k}=0.$$)$
\end{lemma}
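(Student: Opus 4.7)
The plan is to construct $\mathcal{O}(\mu)$ by first showing that every left coset $w\mathfrak{S}_\mu$ contains at least one element of the prescribed form $\sigma(1,\bar{\mu}^{k}+1)$, and then choosing one such representative per coset. The key guiding observation is that $\mathfrak{S}_{n-1}'$ is precisely the stabilizer of $1$ under the natural action of $\mathfrak{S}_n$ on $\{1,\ldots,n\}$, while the integers $\bar{\mu}^{k}+1$ for $k=0,1,\ldots,r-1$ are exactly the leftmost elements of the $r$ blocks $\{\bar{\mu}^{k}+1,\ldots,\bar{\mu}^{k+1}\}$ induced by the composition $\mu$.

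To implement this, given $w\in\mathfrak{S}_n$, I would first locate $w^{-1}(1)$: let $k\in\{0,1,\ldots,r-1\}$ be the unique index for which $\bar{\mu}^{k}+1\le w^{-1}(1)\le \bar{\mu}^{k+1}$. Since $w^{-1}(1)$ and $\bar{\mu}^{k}+1$ lie in a common block, the transposition $u:=(w^{-1}(1),\bar{\mu}^{k}+1)$ belongs to $\mathfrak{S}_{\mu_{k+1}}\subseteq \mathfrak{S}_\mu$, with the understanding that $u=1$ when $w^{-1}(1)=\bar{\mu}^{k}+1$. A direct check shows $(wu)(\bar{\mu}^{k}+1)=1$, hence $wu\cdot(1,\bar{\mu}^{k}+1)$ fixes $1$ and therefore lies in $\mathfrak{S}_{n-1}'$. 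Calling this element $\sigma$, one recovers $w=\sigma(1,\bar{\mu}^{k}+1)u^{-1}$ with $u^{-1}\in\mathfrak{S}_\mu$, which exhibits the desired representative $\sigma(1,\bar{\mu}^{k}+1)$ in the coset $w\mathfrak{S}_\mu$.

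With every coset now known to contain an element of the prescribed form, one defines $\mathcal{O}(\mu)$ by picking exactly one such representative per coset. The main subtle point, as I see it, is not the existence argument above but the coherence of the selection: in principle different pairs $(\sigma_1,k_1)\ne(\sigma_2,k_2)$ could yield the same coset, so one must check that the pruning to a complete transversal actually produces representatives all of the stated shape. This redundancy is however harmless for the existence statement (any choice function suffices) and, if a cleaner uniform description is desired, can be read off directly from the standard symmetric-group coset decomposition in [Ze, Proposition A.3.2], to which the lemma explicitly defers.
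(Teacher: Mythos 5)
Your argument is correct and complete. Note, however, that the paper does not actually prove this lemma: it simply asserts that the statement "follows from [Ze, Proposition A.3.2]" and cites [WW, Lemma 5.10], so there is no internal proof to compare against. What you supply is a self-contained elementary substitute for that citation, and it works: given $w$, you locate the block of $\mu$ containing $w^{-1}(1)$, use a transposition $u=(w^{-1}(1),\bar{\mu}^{k}+1)$ \emph{inside} that block (hence in $\mathfrak{S}_{\mu}$) to move $w^{-1}(1)$ to the leftmost position $\bar{\mu}^{k}+1$ of the block, and then observe that $\sigma:=wu(1,\bar{\mu}^{k}+1)$ fixes $1$ and so lies in $\mathfrak{S}_{n-1}'=\mathrm{Stab}_{\mathfrak{S}_n}(1)$; the identity $w=\sigma(1,\bar{\mu}^{k}+1)u^{-1}$ then shows every left coset contains an element of the required shape, and the lemma only demands the existence of \emph{some} transversal consisting of such elements, so an arbitrary choice per coset finishes the proof. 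You are also right that the possible redundancy among pairs $(\sigma,k)$ is irrelevant here. One tiny point worth smoothing: when some $\mu_i=0$ the element $1$ may sit in a block with $\bar{\mu}^{k}=0$ for $k>0$, in which case $(1,\bar{\mu}^{k}+1)=(1,1)$ should be read as the identity; this is consistent with, but slightly more general than, the paper's stated convention for $k=0$. Compared with deferring to Zelevinsky's general coset machinery, your direct argument is more transparent and costs nothing, so it is a reasonable improvement rather than a deviation.
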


Note that $(1, m+1)=s_{m}\cdots s_{2}s_{1}s_{2}\cdots s_{m}$ for $0\leq m\leq n-1.$ By (\ref{gif}) and the identity $e_{i,j}g_{j}=g_{j}e_{i,j+1}$ for $1\leq i< j\leq n-1,$ we can easily get the following result.
\begin{lemma}\label{xggx-commutator}
Assume that $\mu\in\mathcal{C}_{r}(n)$. Fix $k$ with $0\leq k\leq r-1$ and set $w^k_\mu :=(1,\bar{\mu}^{k}+1).$ Then we have
$$X_{1}g_{w_\mu^k}=g_{w_\mu^k}X_{\bar{\mu}^{k}+1}-(q-1)\sum_{l=1}^{\bar{\mu}^{k}}g_{\bar{\mu}^{k}}\cdots g_{2}g_{1}g_{2}\cdots \widehat{g}_{l}^{X_{l+1}}\cdots g_{\bar{\mu}^{k}}e_{l, \bar{\mu}^{k}+1},$$
where $\widehat{g}_{l}^{X_{l+1}}$ means replacing $g_{l}$ with $X_{l+1}.$
\end{lemma}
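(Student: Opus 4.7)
The plan is to push $X_1$ from the left all the way to the right of $g_{w_\mu^k}$ using the identities collected in \eqref{gxxg}, \eqref{giXj}, \eqref{xyyx}, and \eqref{egge}, carefully tracking the error terms that appear at each step. Write $m=\bar{\mu}^k$, so that $g_{w_\mu^k}=g_mg_{m-1}\cdots g_2 g_1 g_2\cdots g_m$. By \eqref{giXj}, $X_1$ commutes with $g_i$ for every $i\geq 2$, hence
\[
X_1 g_{w_\mu^k}=(g_mg_{m-1}\cdots g_2)\,X_1 g_1 g_2\cdots g_m.
\]
It therefore suffices to push $X_1$ through $g_1 g_2\cdots g_m$ and then left-multiply by $g_m\cdots g_2$ at the very end.

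The core step is the following identity, which I would prove by downward induction on $j$ (from $j=m$ to $j=1$), with the convention that a product $g_a g_{a+1}\cdots g_b$ is $1$ when $a>b$:
\[
X_j g_j g_{j+1}\cdots g_m=g_j g_{j+1}\cdots g_m X_{m+1}-(q-q^{-1})\sum_{l=j}^{m} g_j g_{j+1}\cdots g_{l-1}\,e_l X_{l+1} g_{l+1}\cdots g_m.
\]
The base case $j=m$ is precisely the relation $X_m g_m=g_m X_{m+1}-(q-q^{-1})e_m X_{m+1}$ from \eqref{gxxg}. For the inductive step, I would apply the same relation to the leftmost factor $X_j g_j$: this produces a main term $g_j\bigl(X_{j+1}g_{j+1}\cdots g_m\bigr)$ to which the inductive hypothesis applies, together with a single extra error term $-(q-q^{-1})e_j X_{j+1}g_{j+1}\cdots g_m$, which is exactly the $l=j$ summand. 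Setting $j=1$ yields the desired formula for $X_1 g_1 g_2\cdots g_m$.

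The final simplification is to relocate the factor $e_l$ in each error term to the far right. By \eqref{xyyx}, $e_l$ commutes with $X_{l+1}$, and iterated application of \eqref{egge} gives $e_l\cdot g_{l+1}g_{l+2}\cdots g_m=g_{l+1}g_{l+2}\cdots g_m\,e_{l,m+1}$; combining these,
\[
e_l X_{l+1} g_{l+1}\cdots g_m=X_{l+1} g_{l+1}\cdots g_m\,e_{l,m+1}.
\]
Substituting back into the induction identity and then left-multiplying by $g_m g_{m-1}\cdots g_2$ produces exactly the stated formula: the main term becomes $g_{w_\mu^k}X_{m+1}$, and the $l$-th summand becomes $g_m\cdots g_2 g_1 g_2\cdots g_{l-1}X_{l+1}g_{l+1}\cdots g_m\,e_{l,m+1}$, i.e., the word $g_m\cdots g_2 g_1 g_2\cdots g_m$ with $g_l$ (its second occurrence for $l\geq 2$, its unique occurrence for $l=1$) replaced by $X_{l+1}$.

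The only real obstacle is careful index bookkeeping, especially at the boundary values $l=1$ (where the product $g_1\cdots g_{l-1}$ is empty, so the only $g_1$ present comes from the left-multiplication step) and $l=m$ (where $g_{l+1}\cdots g_m$ is empty); nothing deeper than the relations collected in Lemma~\ref{intertwining} and Section~2 is required.
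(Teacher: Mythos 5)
Your proposal is correct and follows the route the paper itself indicates (the paper only sketches the argument, citing the commutator formula \eqref{gif} and the identity $e_{i,j}g_{j}=g_{j}e_{i,j+1}$): you push $X_1$ through the reduced word $g_m\cdots g_2g_1g_2\cdots g_m$ using the specializations \eqref{gxxg} of \eqref{gif}, and then slide each $e_l$ to the right via \eqref{egge}, which is exactly that identity. The downward induction and the boundary bookkeeping at $l=1$ and $l=m$ are handled correctly, so the argument is complete.
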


Assume that $\{\alpha_{i}\:|\:i\in \mathbb{J}\}$ is the set of simple roots of the affine Lie algebra $\widehat{sl}_{e}$ and $\{\alpha_{i}^{\vee}\:|\:i\in \mathbb{J}\}$ is the set of the corresponding simple coroots. Let $P_{+}$ be the set of all dominant integral weights of $\widehat{sl}_{e}.$
For each $\mu\in P_{+},$  following \cite{AK} the associated \emph{Ariki-Koike algebra} $\mathcal{H}_{m}^{\mu}$ is defined by $$\mathcal{H}_{m}^{\mu}=\widehat{\mathcal{H}}_{m}\Big/\Big\langle\prod_{i\in \mathbb{J}}(Y_{1}-q^{i})^{\langle \alpha_{i}^{\vee}, \mu\rangle}\Big\rangle.$$

For each $\lambda\in \Delta,$ we define $\lambda'\in P_{+}$ by setting $\langle \alpha_{i}^{\vee}, \lambda'\rangle=\lambda_{i}$ for any $i\in \mathbb{J}.$ Thus, we have a one-to-one correspondence between $\Delta$ and $P_{+},$ and we shall identify the two sets. For each $\lambda\in \Delta,$ we define the following algebra:
\begin{equation}\label{hrn-hlambda-add-yh-dda}
\mathcal{H}_{r,n}^{\lambda}=\bigoplus_{\mu\in \mathcal{C}_{r}(n)} \mathcal{H}_{\mu_{1}}^{\lambda}\otimes\cdots\otimes\mathcal{H}_{\mu_{r}}^{\lambda}.
\end{equation}

Recall the functor $\mathcal{F}$ defined in Theorem \ref{weig-e-6}. Then we have the following result.
\begin{theorem}\label{morita-equi}
Fix one $\lambda\in \Delta.$ Then the functor $\mathcal{F}$ induces an equivalence $\mathcal{F}^{\lambda}$ between the categories $Y_{r,n}^{\lambda}$-$\mathbf{mod}$ and $\mathcal{H}_{r,n}^{\lambda}$-$\mathbf{mod}.$
\end{theorem}
\begin{proof}
Recall that the category $Y_{r,n}^{\lambda}$-$\mathbf{mod}$ can be identified with the full subcategory of $\widehat{Y}_{r,n}$-$\mathbf{mod}$ consisting of all modules which are annihilated by $\mathcal{J}_{\lambda}.$ Assume that $M\in \widehat{Y}_{r,n}$-$\mathbf{mod}.$ By Lemma \ref{weig-e-2}, we see that $\mathcal{J}_{\lambda}M=0$ if and only if $\mathcal{J}_{\lambda}M_{\mu}=0$ for each $\mu\in \mathcal{C}_{r}(n).$ Fix one $\mu\in \mathcal{C}_{r}(n).$ By Lemma \ref{weig-e-1}, we have $M_{\mu}\cong \mathrm{Ind}_{\widehat{Y}_{r,\mu}}^{\widehat{Y}_{r,n}}(I_{\mu}M).$ This together with the fact that $g_w\otimes I_{\mu}M=g_\tau\otimes I_{\mu}M$ for any $w=\tau w_{\mu}$ with $\tau\in \mathcal{O}(\mu)$ and $w_{\mu}\in \mathfrak{S}_{\mu}$ and a dimension comparison implies
\begin{equation}\label{mmubigoplus-add}
M_{\mu}\cong\bigoplus_{w\in \mathcal{O}(\mu)}g_{w}\otimes I_{\mu}M
\end{equation}
as $\mathbb{K}$-vector spaces.

For each $w\in \mathcal{O}(\mu)$, there exists $\sigma\in \mathfrak{S}_{n-1}'$ such that $w=\sigma(1,\bar{\mu}^{k}+1)=\sigma w^k_\mu$ for some $0\leq k\leq r-1$ by Lemma \ref{left-coset}. Note that $e_{l, \bar{\mu}^{k}+1}$ acts as zero on $I_{\mu}M$ for all $1\leq l\leq \bar{\mu}^{k}.$ Then by Lemma \ref{xggx-commutator}, we have $X_{1}g_{w^k_\mu}\otimes z=g_{w^k_\mu}\otimes X_{\bar{\mu}^{k}+1}z$ for any $z\in I_{\mu}M,$ and hence
\begin{equation}\label{flambda-add-dda}
f_{\lambda}g_{w}\otimes z=g_{w}\otimes f_{\lambda, k}z,
\end{equation}
where $f_{\lambda, k} :=\prod_{i\in \mathbb{J}}(X_{\bar{\mu}^{k}+1}-q^{i})^{\lambda_{i}}.$


By \eqref{mmubigoplus-add} and \eqref{flambda-add-dda}, we have $f_{\lambda}M_{\mu}=0$ if and only if $f_{\lambda, k}I_{\mu}M=0$ for all $0\leq k\leq r-1.$ By Proposition \ref{weig-e-5}, we have $I_{\mu}M\cong V(\mu)\otimes \mathrm{Hom}_{\mathbb{K}T}(V(\mu), I_{\mu}M).$ Moreover, by Proposition \ref{weig-e-4}, we see that for all $0\leq k\leq r-1,$ $f_{\lambda, k}$ acts as zero on $I_{\mu}M$ if and only if $\prod_{i\in \mathbb{J}}(Y_{\bar{\mu}^{k}+1}-q^{i})^{\lambda_{i}}$ acts as zero on $\mathrm{Hom}_{\mathbb{K}T}(V(\mu), I_{\mu}M).$

By the above arguments, we have $f_{\lambda}M=0$ if and only if $\mathrm{Hom}_{\mathbb{K}T}(V(\mu), I_{\mu}M)\in \mathcal{H}_{r,n}^{\lambda}$-$\mathbf{mod}$ for each $\mu\in \mathcal{C}_{r}(n).$ We are done.
\end{proof}

\section{Simple $Y_{r,n}^{\lambda}$-modules and modular branching rules for $Y_{r,n}^{\lambda}$}
In this section, we shall present several applications of the category equivalence established in Theorem \ref{morita-equi}. We shall classify all finite dimensional simple $Y_{r,n}^{\lambda}$-modules, and establish the modular branching rule for $Y_{r,n}^{\lambda}$ which gives a description of the socle of the restriction to $Y_{r,(n-1,1)}^{\lambda}$ of a simple $Y_{r,n}^{\lambda}$-module, where $Y_{r,(n-1,1)}^{\lambda}$ is a subalgebra of $Y_{r,n}^{\lambda}.$ We also provide a crystal graph interpretation for the modular branching rule of $Y_{r,n}^{\lambda}.$ In the end, we shall give a block decomposition of $Y_{r,n}^{\lambda}$-$\mathbf{mod}.$


\subsection{Simple $Y_{r,n}^{\lambda}$-modules}
Fix one $\lambda\in \Delta$ and set $d :=|\lambda|.$ For each $m,$ let $\mathrm{ev}_{m,\lambda}$ denote the surjective algebra homomorphism $\mathrm{ev}_{m,\lambda}: \widehat{\mathcal{H}}_{m}\rightarrow\mathcal{H}_{m}^{\lambda}.$ Then an $\mathcal{H}_{m}^{\lambda}$-module $L$ can be regarded as an $\widehat{\mathcal{H}}_{m}$-module by inflation, which we shall denote by $\mathrm{ev}^*_{m,\lambda}L$. From the proof of Theorem \ref{morita-equi}, we see that if $L_{k}$ $(1\leq k\leq r)$ is a simple $\mathcal{H}_{\mu_{k}}^{\lambda}$-module, then $S_{\mu}(L.)$ is in fact a $Y_{r,n}^{\lambda}$-module. Thus, by Theorem \ref{simple-mod-1} we immediately obtain the following result.
\begin{theorem}\label{cycloto-simple-mod-1}
Each simple $Y_{r,n}^{\lambda}$-module is isomorphic to a module of the form
$$S_{\mu}(L.) :=\mathrm{Ind}_{\widehat{Y}_{r,\mu}}^{\widehat{Y}_{r,n}}\big(V(\mu)\otimes (\mathrm{ev}_{\mu_1,\lambda}^{*}L_{1}\otimes\cdots\otimes \mathrm{ev}_{\mu_r,\lambda}^{*}L_{r})\big),$$
where $\mu=(\mu_{1},\ldots,\mu_{r})\in \mathcal{C}_{r}(n)$ and $L_{k}$ $(1\leq k\leq r)$ is a simple $\mathcal{H}_{\mu_{k}}^{\lambda}$-module. Moreover, the above modules $S_{\mu}(L.),$ with $\mu=(\mu_{1},\ldots,\mu_{r})$ running through $\mathcal{C}_{r}(n)$ and $L_{k}$ $(1\leq k\leq r)$ running through all non-isomorphic simple $\mathcal{H}_{\mu_{k}}^{\lambda}$-modules, form a complete set of pairwise non-isomorphic simple $Y_{r,n}^{\lambda}$-modules.
\end{theorem}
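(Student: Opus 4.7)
The plan is to argue exactly as in the proof of Theorem \ref{simple-mod-1} for the affine case, now using the Morita equivalence of Theorem \ref{morita-equi} in place of that of Theorem \ref{weig-e-6}. Since $\mathcal{F}^{\lambda}: Y_{r,n}^{\lambda,\mathbb{K}}\text{-}\mathbf{mod} \to \mathcal{H}_{r,n}^{\lambda, \mathbb{K}}\text{-}\mathbf{mod}$ is an equivalence of categories, classifying simple $Y_{r,n}^{\lambda,\mathbb{K}}$-modules reduces to classifying simple $\mathcal{H}_{r,n}^{\lambda, \mathbb{K}}$-modules. The direct-sum-of-tensor-products structure
$$\mathcal{H}_{r,n}^{\lambda, \mathbb{K}} = \bigoplus_{\mu \in \mathcal{C}_r(n)} \mathcal{H}_{\mu_1}^{\lambda, \mathbb{K}} \otimes \cdots \otimes \mathcal{H}_{\mu_r}^{\lambda, \mathbb{K}}$$
immediately gives the latter: each simple module is supported on a unique summand indexed by some $\mu \in \mathcal{C}_r(n)$, and is an outer tensor product $L_1 \otimes \cdots \otimes L_r$ for uniquely determined simple $\mathcal{H}_{\mu_k}^{\lambda, \mathbb{K}}$-modules $L_k$, by the standard tensor product theorem over the algebraically closed field $\mathbb{K}$.

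Next, I would trace through the inverse functor on each such tensor product to see what it produces on the Yokonuma side. Applying the description of $\mathcal{G}$ from Theorem \ref{weig-e-6} together with Proposition \ref{weig-e-4} to the $\mu$-summand, the image of $L_1 \otimes \cdots \otimes L_r$ is
$$\mathrm{Ind}_{\widehat{Y}_{r,\mu}^{\mathbb{K}}}^{\widehat{Y}_{r,n}^{\mathbb{K}}}\bigl(V(\mu) \otimes (L_1 \otimes \cdots \otimes L_r)\bigr).$$
After splitting $\widehat{Y}_{r,\mu}^{\mathbb{K}} \cong \widehat{Y}_{r,\mu_1}^{\mathbb{K}} \otimes \cdots \otimes \widehat{Y}_{r,\mu_r}^{\mathbb{K}}$ and distributing $V(\mu) = V_1^{\otimes \mu_1} \otimes \cdots \otimes V_r^{\otimes \mu_r}$ across the factors, this becomes exactly the module $S_\mu(L.)$ of the statement, where each $L_k$ is viewed as an $\widehat{\mathcal{H}}_{\mu_k}^{\mathbb{K}}$-module via the inflation $\mathrm{ev}_{\mu_k,\lambda}^{*}$.

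The one point needing verification is that the module so constructed genuinely descends to the cyclotomic quotient $Y_{r,n}^{\lambda,\mathbb{K}}$, i.e.\ that $\mathcal{J}_{\lambda}$ annihilates it. This is precisely the identity $f_{\lambda} g_{w} \otimes z = g_{w} \otimes f_{\lambda,k} z$ already established in the proof of Theorem \ref{morita-equi}, together with the fact that each $L_k$ is annihilated by $\prod_{i \in \mathbb{J}}(Y_1 - q^i)^{\lambda_i}$ by hypothesis. There is no genuine obstacle here; since the equivalence $\mathcal{F}^{\lambda}$ and the affine classification Theorem \ref{simple-mod-1} do all of the substantive work, the only step requiring care is the clean bookkeeping to match the two descriptions of a simple module under the equivalence.
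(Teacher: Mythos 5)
Your proposal is correct and follows essentially the same route as the paper: the paper likewise deduces the theorem by combining the Morita equivalence of Theorem \ref{morita-equi} (in particular the identity $f_{\lambda}g_{w}\otimes z=g_{w}\otimes f_{\lambda,k}z$ from its proof, which shows $S_{\mu}(L.)$ descends to the cyclotomic quotient) with the affine classification of Theorem \ref{simple-mod-1}. The only cosmetic difference is that the paper cites Theorem \ref{simple-mod-1} directly for simplicity, completeness and pairwise non-isomorphism, whereas you re-trace the inverse functor $\mathcal{G}$ and invoke the tensor product theorem for the summands of $\mathcal{H}_{r,n}^{\lambda,\mathbb{K}}$; both amount to the same argument.
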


Recall that the classification of simple modules of Ariki-Koike algebras over an arbitrary field has been given by Ariki in terms of Kleshchev multipartitions. Let $\mathcal{I}_{m}^{\lambda}$ be the set of all $d$-multipartitions of $m.$ We denote by $\mathcal{K}_{m}^{\lambda}$ the set of all Kleshchev multipartitions in $\mathcal{I}_{m}^{\lambda};$ see \cite[Definition 2.3]{Ari2} for the precise definition. Then the simple $\mathcal{H}_{m}^{\lambda}$-modules are parameterized by $\mathcal{K}_{m}^{\lambda};$ see \cite[Theorem 4.2]{Ari2}. Combining this with Theorem \ref{cycloto-simple-mod-1}, we immediately obtain the next result.
\begin{corollary}
The simple $Y_{r,n}^{\lambda}$-modules are parameterized by the following set $$\mathcal{B} :=\big\{(\mu, \psi_{1},\ldots,\psi_{r})\:|\:\mu=(\mu_{1},\ldots,\mu_{r})\in \mathcal{C}_{r}(n)\text{ and }\psi_{i} \in \mathcal{K}_{\mu_{i}}^{\lambda}\text{ for }1\leq i\leq r\big\}.$$
\end{corollary}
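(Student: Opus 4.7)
The plan is to deduce the corollary by combining the classification of simple $Y_{r,n}^{\lambda,\mathbb{K}}$-modules established in Theorem \ref{cycloto-simple-mod-1} with Ariki's classification of simple modules of cyclotomic Hecke algebras (cited immediately before the corollary as \cite[Theorem 4.2]{Ari2}).

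First, I would take an arbitrary simple $Y_{r,n}^{\lambda,\mathbb{K}}$-module $M$. By Theorem \ref{cycloto-simple-mod-1}, $M$ is isomorphic to $S_\mu(L.)$ for a uniquely determined $r$-composition $\mu=(\mu_1,\ldots,\mu_r)\in\mathcal{C}_r(n)$ and an $r$-tuple $(L_1,\ldots,L_r)$ of simple modules $L_k\in\mathcal{H}_{\mu_k}^{\lambda,\mathbb{K}}\text{-}\mathbf{mod}$, each determined up to isomorphism. Then I would invoke the classification of Ariki to associate to each $L_k$ a uniquely determined Kleshchev multipartition $\psi_k\in\mathcal{K}_{\mu_k}^{\lambda}$. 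This assignment $M\mapsto (\mu,\psi_1,\ldots,\psi_r)$ gives a well-defined map from isomorphism classes of simple $Y_{r,n}^{\lambda,\mathbb{K}}$-modules into the set $\mathcal{B}$.

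To see that the map is a bijection, I would construct the inverse: given $(\mu,\psi_1,\ldots,\psi_r)\in\mathcal{B}$, Ariki's theorem provides simple $\mathcal{H}_{\mu_k}^{\lambda,\mathbb{K}}$-modules $L_k\leftrightarrow\psi_k$, and feeding these into the construction of Theorem \ref{cycloto-simple-mod-1} yields a simple $Y_{r,n}^{\lambda,\mathbb{K}}$-module $S_\mu(L.)$. The non-isomorphism statement in Theorem \ref{cycloto-simple-mod-1} guarantees that distinct tuples produce non-isomorphic simple modules, so the two assignments are mutually inverse.

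The argument is essentially a bookkeeping composition of two classification results, so I do not expect any genuine obstacle; the only point that deserves a brief mention is that the parameter $\mu\in\mathcal{C}_r(n)$ on the $Y$-side recovers uniquely the sizes of the multipartitions $\psi_k$, so the indexing set $\mathcal{B}$ is correctly stratified by $\mu$ with no overcounting.
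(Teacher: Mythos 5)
Your proposal is correct and follows exactly the paper's argument: the paper also obtains the corollary immediately by combining Theorem \ref{cycloto-simple-mod-1} with Ariki's parameterization of simple $\mathcal{H}_{\mu_k}^{\lambda,\mathbb{K}}$-modules by Kleshchev multipartitions. Your write-up merely makes explicit the bookkeeping that the paper leaves implicit.
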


\begin{remark} The simple modules of a cyclotomic Yokonuma-Hecke algebra in the generic semisimple case have been classified in \cite[Proposition 3.4]{ChP2}.\end{remark}

In the case that $d=1,$ $Y_{r,n}^{\lambda}$ is the Yokonuma-Hecke algebra $Y_{r,n}$ (see Remark \ref{rem-YH}), and $\mathcal{K}_{m}^{\lambda}$ is exactly the set of $e$-restricted partitions of $m$ (recall that $e$ is the order of $q$ in $\mathbb{K}^{*}$). Thus, we recover the following result due to Jacon and Poulain d'Andecy.
\begin{corollary}{\rm (See \cite[Section 4.1]{JPA}.)}
The simple $Y_{r,n}$-modules are parameterized by the set
$$\mathcal{C} :=\big\{(\mu, \psi_{1},\ldots,\psi_{r})\:|\:\mu\in \mathcal{C}_{r}(n)\text{ and each } \psi_{i} \text{ is an }e\text{-restricted partition of } \mu_{i}\big\}.$$
\end{corollary}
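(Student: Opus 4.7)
The plan is to deduce the statement as a direct specialization of the preceding corollary (the one parameterizing simple $Y_{r,n}^{\lambda,\mathbb{K}}$-modules by tuples of Kleshchev multipartitions) to the case $|\lambda|=1$, so virtually no new work is required.

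First I would pin down the value of $\lambda$ that recovers the Yokonuma--Hecke algebra $Y_{r,n}^{\mathbb{K}}$. Take $\lambda=\varepsilon_{0}\in\Delta$, so that $|\lambda|=\sum_{i\in\mathbb{J}}\lambda_i = 1$ and
\[
f_{\lambda}(X_{1})=X_{1}-1.
\]
Under the surjection $\pi:\widehat{Y}_{r,n}^{\mathbb{K}}\twoheadrightarrow Y_{r,n}^{\mathbb{K}}$ sending $X_{1}\mapsto 1$ (cf.\ Remark \ref{rem-YH}), the two-sided ideal $\mathcal{J}_{\lambda}$ generated by $X_{1}-1$ coincides with $\ker\pi$; hence
\[
Y_{r,n}^{\lambda,\mathbb{K}}\;=\;\widehat{Y}_{r,n}^{\mathbb{K}}/\mathcal{J}_{\lambda}\;\cong\; Y_{r,n}^{\mathbb{K}}.
\]
A quick PBW-dimension check using Proposition \ref{cyclotomic-pbw} (where $d=|\lambda|=1$, so each $\alpha_i$ is forced to be $0$) confirms that the induced map has the correct dimension $r^{n}\,n!$.

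Next I would identify the cyclotomic Hecke algebras appearing on the Hecke side. For this $\lambda$, $\langle h_i,\lambda'\rangle = \lambda_i$ gives $\langle h_0,\lambda'\rangle=1$ and all other pairings zero, so $\mathcal{H}_m^{\lambda,\mathbb{K}}$ is the quotient of $\widehat{\mathcal{H}}_m^{\mathbb{K}}$ by $Y_1-1$; this is exactly the finite-type-$A$ Iwahori--Hecke algebra $\mathcal{H}_m^{\mathbb{K}}$ on $m$ letters. By Dipper--James (or equivalently, by Ariki's theorem specialized to $|\lambda|=1$), the simple $\mathcal{H}_m^{\mathbb{K}}$-modules are in bijection with the $e$-restricted partitions of $m$, which is precisely what the text records as the Kleshchev $1$-multipartitions $\mathcal{K}_m^{\lambda}$ in this case.

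Finally I would invoke Theorem \ref{cycloto-simple-mod-1} (or equivalently the preceding Corollary for general $\lambda$): a complete, pairwise non-isomorphic list of simple $Y_{r,n}^{\lambda,\mathbb{K}}$-modules is given by the modules $S_{\mu}(L.)$ indexed by $\mu=(\mu_{1},\dots,\mu_{r})\in\mathcal{C}_r(n)$ together with a choice of simple $\mathcal{H}_{\mu_k}^{\lambda,\mathbb{K}}$-module $L_k$ for each $k$. Substituting our identifications, this becomes exactly the set
\[
\mathcal{C}=\bigl\{(\mu,\psi_{1},\ldots,\psi_{r}) \mid \mu\in\mathcal{C}_r(n),\ \psi_i \text{ an } e\text{-restricted partition of } \mu_i\bigr\},
\]
which is the claim. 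There is essentially no obstacle here beyond the bookkeeping; the only point requiring a line of justification is the identification $Y_{r,n}^{\varepsilon_0,\mathbb{K}}\cong Y_{r,n}^{\mathbb{K}}$ (and implicitly the identification of Kleshchev $1$-multipartitions with $e$-restricted partitions), both of which are immediate from the definitions and the PBW theorem.
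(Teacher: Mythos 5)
Your proposal is correct and follows essentially the same route as the paper: the corollary is obtained by specializing the preceding classification of simple $Y_{r,n}^{\lambda,\mathbb{K}}$-modules to the case $|\lambda|=1$ (with $\lambda=\varepsilon_{0}$, so that $Y_{r,n}^{\lambda,\mathbb{K}}\cong Y_{r,n}^{\mathbb{K}}$ and $\mathcal{K}_{m}^{\lambda}$ becomes the set of $e$-restricted partitions of $m$). Your extra details -- the PBW dimension check and the identification of $\mathcal{J}_{\lambda}$ with $\ker\pi$ -- merely make explicit what the paper leaves implicit.
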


\subsection{The functors $e_{j, k}^{\lambda}$ and $f_{j, k}^{\lambda}$}
Fix a module $M$ in $\widehat{Y}_{r,n}$-$\mathbf{mod}_{\mathbb{I}}.$ From (\ref{mgammamu}), we get the following decomposition:
\begin{equation}\label{m-mu-gamma-add-add-new-add}
M=\bigoplus_{\mu\in \mathcal{C}_{r}(n); \gamma\in \mathbb{I}^{n}/\sim}M[\mu, \gamma].
\end{equation}

For each $j\in \mathbb{J},$ let $\varepsilon_{j}$ be the associated standard basis of $\mathbb{Z}^{e}.$ We denote by $\Gamma_{n}$ the set of linear combinations $\gamma=\sum_{j\in \mathbb{J}}\gamma_{j}\varepsilon_{j}$ of $\varepsilon_{j}$ such that $\gamma_{j}\in \mathbb{Z}_{\geq 0}$ for all $j$ and $\sum_{j\in \mathbb{J}}\gamma_{j}=n.$ If $\underline{s}\in \mathbb{I}^{n},$ we define its \emph{content} by $$\mathrm{cont}(\underline{s}) :=\sum_{j\in \mathbb{J}}\gamma_{j}\varepsilon_{j}\in \Gamma_{n},~~~\mathrm{where}~\gamma_{j}=\#\big\{k=1,2,\ldots,n\:|\:s_{k}=q^{j}\big\}.$$
The content map induces a canonical bijection between $\mathbb{I}^{n}/\sim$ and $\Gamma_{n},$ and we shall not distinguish between them. Then we rewrite \eqref{m-mu-gamma-add-add-new-add} as
\begin{equation}\label{m-mu-gamma}
M=\bigoplus_{\mu\in \mathcal{C}_{r}(n); \gamma\in \Gamma_{n}}M[\mu, \gamma].
\end{equation}
In fact, such a decomposition also makes sense in the category $Y_{r,n}^{\lambda}$-$\mathbf{mod}$ for all $\lambda\in \Delta.$

By Proposition \ref{cyclotomic-pbw}, it is easy to see that $Y_{r,n-1}^{\lambda}\otimes \mathbb{K}G$ is isomorphic to the subalgebra of $Y_{r,n}^{\lambda}$ generated by $X_{1},\ldots,X_{n-1},$ $t_{1},\ldots, t_{n}$ and $g_{w}$ for all $w\in \mathfrak{S}_{n-1}.$ We shall not distinguish between them.
\begin{definition}\label{ef-M}
Suppose that $M\in Y_{r,n}^{\lambda}$-$\mathbf{mod}$ and that $M=M[\mu, \gamma]$ for some $\mu\in \mathcal{C}_{r}(n)$ and $\gamma\in \Gamma_{n}.$ For each $j\in \mathbb{J}$ and $1\leq k\leq r,$ we define
$$e_{j,k}^{\lambda}M=\mathrm{Hom}_{\mathbb{K}G}\big(V_{k}, \mathrm{Res}_{Y_{r,n-1}^{\lambda}\otimes \mathbb{K}G}^{Y_{r,n}^{\lambda}}M\big)\big[\mu_{k}^{-}, \gamma-\varepsilon_{j}\big],$$
$$f_{j,k}^{\lambda}M=\Big(\mathrm{Ind}_{Y_{r,n}^{\lambda}\otimes \mathbb{K}G}^{Y_{r,n+1}^{\lambda}}\big(M\otimes V_{k}\big)\Big)\big[\mu_{k}^{+}, \gamma+\varepsilon_{j}\big].$$
\end{definition}

By \eqref{m-mu-gamma}, we can extend $e_{j, k}^{\lambda}$ (resp. $f_{j,k}^{\lambda}$) to functors from $Y_{r,n}^{\lambda}$-$\mathbf{mod}$ to $Y_{r,n-1}^{\lambda}$-$\mathbf{mod}$ (resp. from $Y_{r,n}^{\lambda}$-$\mathbf{mod}$ to $Y_{r,n+1}^{\lambda}$-$\mathbf{mod}$).

\begin{remark}\label{remark-1} When $r=1,$ the cyclotomic Yokonuma-Hecke algebra coincides with the Ariki-Koike algebra, and the functors $e_{j, k}^{\lambda}$ and $f_{j,k}^{\lambda}$ in fact coincide with the ones $e_{j}^{\lambda}$ and $f_{j}^{\lambda}$ for Ariki-Koike algebras, which are defined by Ariki and also Grojnowski; see \cite{Ari1} and \cite{Gr}.
\end{remark}

\subsection{Branching rules for $Y_{r,n}^{\lambda}$ and a crystal graph interpretation}
For a module category $\mathcal{A},$ let $K(\mathcal{A})$ be the Grothendieck group of $\mathcal{A}$ and $\mathrm{Irr}(\mathcal{A})$ be the set of pairwise non-isomorphic simple objects in $\mathcal{A}.$ For each $\lambda\in P_{+},$ we set $$K(\lambda) :=\bigoplus_{m\geq 0}K(\mathcal{H}_{m}^{\lambda}\mathbf{-mod}),\quad\text{and}\quad K(\lambda)_{\mathbb{C}} :=\mathbb{C}\otimes_{\mathbb{Z}}K(\lambda).$$

Fix one $j\in \mathbb{J}.$ Associated to the two functors $e_{j}^{\lambda}$ and $f_{j}^{\lambda}$ for Ariki-Koike algebras
in Remark \ref{remark-1}, there are two additional operators $\tilde{e}_{j}^{\lambda}$ and $\tilde{f}_{j}^{\lambda}$ on
$\coprod_{m\geq 0}\mathrm{Irr}(\mathcal{H}_{m}^{\lambda}$-$\mathbf{mod})$
by setting $\tilde{e}_{j}^{\lambda}L :=\mathrm{Soc}(e_{j}^{\lambda}L)$ and
$\tilde{f}_{j}^{\lambda}L :=\mathrm{Head}(f_{j}^{\lambda}L),$ where $L$ is a simple $\mathcal{H}_{m}^{\lambda}$-module.

Let $L(\lambda)$ be the simple highest weight $\widehat{sl}_{e}$-module of highest weight $\lambda.$ Then we have the following results.
\begin{proposition}\label{ariki-groj}{\rm (See \cite[Theorem 4.4]{Ari1} and \cite[Theorems 14.2 and 14.3]{Gr}.)}
Assume that $\lambda\in P_{+}.$ Then $K(\lambda)_{\mathbb{C}}$ is an $\widehat{sl}_{e}$-module with the Chevalley generators acting as $e_{j}^{\lambda}$ and $f_{j}^{\lambda}$ $($with $j\in \mathbb{J}),$ and is isomorphic to $L(\lambda)$ as $\widehat{sl}_{e}$-modules.

Moreover, $\coprod_{m\geq 0}\mathrm{Irr}(\mathcal{H}_{m}^{\lambda}$-$\mathbf{mod})$ is isomorphic to the crystal basis $B(\lambda)$ of the simple $\widehat{sl}_{e}$-module $L(\lambda)$ with operators $\tilde{e}_{j}^{\lambda}$ and $\tilde{f}_{j}^{\lambda}$ identified with the Kashiwara operators.
\end{proposition}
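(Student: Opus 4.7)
The plan is to prove this in two stages: first establish the Lie-algebra structure on $K(\lambda)_{\mathbb{C}}$ and identify it with $L(\lambda)$, and then upgrade the analysis to the crystal-theoretic level to obtain the identification of $\coprod_{n\geq 0}\mathrm{Irr}(\mathcal{H}_{n}^{\lambda}\text{-}\mathbf{mod})$ with $B(\lambda)$. Since the functors $e_{i}^{\lambda},f_{i}^{\lambda}$ on $\coprod_{n}\mathcal{H}_{n}^{\lambda}$-$\mathbf{mod}$ are the cyclotomic analogues of restriction/induction refined by the generalized $Y_{n}$-eigenspace decomposition, they are exact; hence they descend to $\mathbb{Z}$-linear maps on $K(\lambda)$.

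My first step would be to verify that these induced operators on $K(\lambda)_{\mathbb{C}}$ satisfy the Chevalley relations for $\widehat{sl}_{e}$. The commutation $[e_{i}^{\lambda},f_{j}^{\lambda}]=\delta_{ij}h_{i}$ on the Grothendieck group is proved by a Mackey-style short exact sequence
\begin{equation*}
0\longrightarrow f_{j}^{\lambda}e_{i}^{\lambda}M\longrightarrow e_{i}^{\lambda}f_{j}^{\lambda}M\longrightarrow \delta_{ij}M^{\oplus \langle h_{i},\lambda-\mathrm{wt}(M)\rangle}\longrightarrow 0,
\end{equation*}
which requires computing $\mathrm{Res}_{n}^{n+1}\mathrm{Ind}_{n}^{n+1}$ using the two-coset decomposition of $\mathfrak{S}_{n+1}$ over $\mathfrak{S}_{n}\times\mathfrak{S}_{1}$ and then taking simultaneous $Y_{n},Y_{n+1}$ eigenspaces. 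The Serre relations follow from a similar, somewhat more delicate analysis of $e_{i}^{\lambda}e_{j}^{\lambda}$ and $f_{i}^{\lambda}f_{j}^{\lambda}$, using the intertwiners of $\widehat{\mathcal{H}}_{n}^{\mathbb{K}}$ (analogues of the $\Theta_{i}$ constructed in Lemma~\ref{intertwining}) to exchange adjacent eigenspaces. Integrability, i.e.\ local nilpotency of each $e_{i}^{\lambda}$ and $f_{i}^{\lambda}$, follows from the cyclotomic relation $\prod_{j\in\mathbb{J}}(Y_{1}-q^{j})^{\lambda_{j}}=0$ which bounds, via a Jucys--Murphy type argument, the total $Y_{k}$-content of any module.

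Once the $\widehat{sl}_{e}$-module structure is in place, identifying $K(\lambda)_{\mathbb{C}}$ with $L(\lambda)$ proceeds in two moves. First, the trivial module of $\mathcal{H}_{0}^{\lambda,\mathbb{K}}=\mathbb{K}$ is a highest weight vector of weight $\lambda$, since $e_{i}^{\lambda}$ vanishes on it and the weight can be read from the cyclotomic parameters; thus there is a surjective $\widehat{sl}_{e}$-homomorphism $L(\lambda)\twoheadrightarrow K(\lambda)_{\mathbb{C}}$. Second, one compares formal characters (equivalently, graded dimensions indexed by $\Gamma_{n}$ in the notation of Section 5.2) on each weight space: the Weyl--Kac character of $L(\lambda)$ at $\mathrm{wt}=\lambda-\gamma$ must coincide with $\#\mathcal{K}_{n}^{\lambda}\cap\{\text{multipartitions of content }\gamma\}$, which one verifies by showing that the classification of simple $\mathcal{H}_{n}^{\lambda,\mathbb{K}}$-modules by Kleshchev multipartitions realizes the Kashiwara crystal of $L(\lambda)$. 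The dimension match forces the surjection to be an isomorphism.

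For the second assertion, the strategy is to verify that $\tilde{e}_{i}^{\lambda}L=\mathrm{Soc}(e_{i}^{\lambda}L)$ and $\tilde{f}_{i}^{\lambda}L=\mathrm{Head}(f_{i}^{\lambda}L)$ are either zero or simple, that they are mutually inverse partial bijections on the set of isoclasses of simple modules, and that the resulting combinatorial graph on $\coprod_{n}\mathrm{Irr}(\mathcal{H}_{n}^{\lambda}\text{-}\mathbf{mod})$ satisfies the axioms characterizing the crystal $B(\lambda)$ uniquely (this is Kashiwara's grand loop/characterization argument). The simplicity of the socle/head is the technical heart: it is deduced from the irreducibility of the ``Shuffle'' operation $\tilde{f}_{i}^{\lambda}$, which one proves by constructing, for each simple $L$ with $\varepsilon_{i}(L)=\varepsilon$, an explicit nonzero map out of $f_{i}^{\lambda}L$ onto a prescribed simple quotient and checking that $e_{i}^{\lambda}$ applied to this quotient recovers $L^{\oplus(\varepsilon+1)}$. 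The expected main obstacle is precisely this simplicity statement together with the verification of the Serre relations, both of which require careful bookkeeping of weight-space restrictions of induction/restriction across the two-step transition $n\to n+1\to n$; this is where Grojnowski's framework does most of the heavy lifting, and I would closely follow the arguments of [Gr] and [Ari1].
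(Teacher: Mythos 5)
The paper offers no proof of this proposition: the sentence immediately preceding it says the results ``are proved in [Ari1, Theorem 4.4] and [Gr, Theorem 12.3],'' and the statement is simply imported from those references. So there is nothing in the paper to compare your argument against line by line; what you have written is, in effect, a summary of the strategy of the cited works themselves (exactness of $e_{i}^{\lambda}$ and $f_{i}^{\lambda}$, the Mackey-type exact sequence giving $[e_{i}^{\lambda},f_{j}^{\lambda}]=\delta_{ij}h_{i}$ in the Grothendieck group, the class of the trivial $\mathcal{H}_{0}^{\lambda}$-module as a highest weight vector, and the Grojnowski--Vazirani socle/head analysis feeding into Kashiwara's uniqueness characterization of $B(\lambda)$), and that outline is accurate.

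Two soft spots, should you actually intend to execute this rather than cite it. First, the step identifying $K(\lambda)_{\mathbb{C}}$ with $L(\lambda)$ is imprecise: a highest weight vector gives a surjection from the Verma module $M(\lambda)$, not from $L(\lambda)$ (a surjection out of the simple $L(\lambda)$ onto a nonzero module is already an isomorphism, so positing it begs the question), and your proposed character comparison against the count of Kleshchev multipartitions of given content is circular, since the statement that Kleshchev multipartitions realize $B(\lambda)$ is part of what is being proven. The clean route is: once local nilpotency and the $\mathfrak{sl}_{2}$-relations are established, $K(\lambda)_{\mathbb{C}}$ is an integrable highest weight module of dominant integral highest weight $\lambda$ generated by that vector (generation uses that $\mathrm{Res}$ of a nonzero module is nonzero, so every simple with $n\geq 1$ has some $\tilde{e}_{i}^{\lambda}L\neq 0$), and any such module is automatically isomorphic to $L(\lambda)$; the Serre relations likewise come for free from integrability rather than from a direct Mackey analysis of $e_{i}^{\lambda}e_{j}^{\lambda}$. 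Second, the simplicity of $\mathrm{Soc}(e_{i}^{\lambda}L)$ and $\mathrm{Head}(f_{i}^{\lambda}L)$ and the verification of Kashiwara's axioms constitute the bulk of [GV] and [Gr]; your sketch correctly locates the difficulty but does not supply it. For the purposes of this paper, none of this is needed: the proposition is a quotation.
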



For each $\lambda\in \Delta,$ we set $$K_{T}(\lambda) :=\bigoplus_{n\geq 0}K(Y_{r,n}^{\lambda}\mathbf{-mod}),\quad\text{and}\quad K_{T}(\lambda)_{\mathbb{C}} :=\mathbb{C}\otimes_{\mathbb{Z}}K_{T}(\lambda).$$
For each $j\in \mathbb{J}$ and $1\leq k\leq r,$ we have defined two functors $e_{j,k}^{\lambda}$ and $f_{j,k}^{\lambda}$ in Definition \ref{ef-M}. They induces linear operators on $K_{T}(\lambda).$ By Theorem \ref{morita-equi}, the category equivalence $\mathcal{F}^{\lambda}$ induces a canonical linear isomorphism
\begin{equation}\label{f-lambda}
\widetilde{\mathcal{F}}^{\lambda} :K_{T}(\lambda)\overset{\sim}{\longrightarrow} K(\lambda)\otimes \cdots\otimes K(\lambda)= K(\lambda)^{\otimes r}.
\end{equation}

Observe that the functor $e_{i,k}^{\lambda}$ corresponds via $\widetilde{\mathcal{F}}^{\lambda}$ to $e_{i}^{\lambda}$ applied to the $k$-th tensor factor on the right-hand side of (\ref{f-lambda}) by Lemma \ref{branch-rule-2}. By applying Frobenius reciprocity in the context of the pair of algebras $(Y_{r,n-1}^{\lambda}\otimes \mathbb{K}G, Y_{r,n}^{\lambda})$ (see \cite[Chapter II, $\S5.1$]{Bo}) we can deduce that  $f_{i,k}^{\lambda}$ is left adjoint to $e_{i,k}^{\lambda}$ and $f_{i}^{\lambda}$ is left adjoint to $e_{i}^{\lambda};$ hence $f_{i,k}^{\lambda}$ corresponds to $f_{i}^{\lambda}$ applied to the $k$-th tensor factor on the right-hand side of (\ref{f-lambda}). Hence Theorem \ref{branch-rule-3} implies the following modular branching rule for $Y_{r,n}^{\lambda}$ under the identification of $Y_{r,n}^{\lambda}$-$\mathbf{mod}$ with a full subcategory of $\widehat{Y}_{r,n}$-$\mathbf{mod}.$ We denote by $Y_{r,(n-1,1)}^{\lambda}$ the subalgebra of $Y_{r,n}^{\lambda}$ generated by $X_{1},\ldots,X_{n},$ $t_1,\ldots,t_{n}$ and $g_{w}$ for all $w\in \mathfrak{S}_{n-1}.$

\begin{theorem}\label{cycloto-module-branching-rulesss}
Consider the simple $Y_{r,n}^{\lambda}$-module $S_{\mu}(L.)$ defined in Theorem \ref{cycloto-simple-mod-1}. Then we have
$$\mathrm{Soc}(\mathrm{Res}_{Y_{r,(n-1,1)}^{\lambda}}^{Y_{r,n}^{\lambda}}S_{\mu}(L.))\cong \bigoplus_{i\in \mathbb{J}; 1\leq k\leq r} S_{\mu_{k}^{-}}(\tilde{e}_{i}^{\lambda}L.)\otimes (V_{k}\otimes L(i)),$$
where  $L(i)$ is the one-dimensional $\mathbb{K}[X]$-module with $X$ acting as the scalar $q^{i}$ and $S_{\mu_{k}^{-}}(\tilde{e}_{i}^{\lambda}L.)$ denotes the $Y_{r,n-1}^{\lambda}$-module
$$\mathrm{Ind}_{\widehat{Y}_{r,\mu_{k}^{-}}}^{\widehat{Y}_{r,n-1}}\big(V(\mu_{k}^{-})\otimes (L_{1}\otimes\cdots\otimes \tilde{e}_{i}^{\lambda}L_{k}\otimes\cdots\otimes L_{r})\big).$$
\end{theorem}

Combining Theorem \ref{morita-equi} with Proposition \ref{ariki-groj} and Theorem \ref{cycloto-module-branching-rulesss}, we have established the following result.
\begin{theorem}
Assume that $\lambda\in \Delta.$ $K_{T}(\lambda)_{\mathbb{C}}$ affords a simple $\widehat{sl}_{e}^{\oplus r}$-module isomorphic to $L(\lambda)^{\otimes r}$ with the Chevalley generators of the $k$-th summand of $\widehat{sl}_{e}^{\oplus r}$ acting as $e_{j,k}^{\lambda}$ and $f_{j,k}^{\lambda}$ $($with $j\in \mathbb{J})$ for each $1\leq k\leq r.$

Moreover, $\coprod_{n\geq 0}\mathrm{Irr}(Y_{r,n}^{\lambda}$-$\mathbf{mod})$ $($and respectively, the modular branching graph given by Theorem \ref{cycloto-module-branching-rulesss}$)$ is isomorphic to the crystal basis $B(\lambda)^{\otimes r}$ $($and respectively, the corresponding crystal graph$)$ of the simple $\widehat{sl}_{e}^{\oplus r}$-module $L(\lambda)^{\otimes r}.$
\end{theorem}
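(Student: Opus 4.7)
The plan is to deduce the theorem directly from the Morita equivalence $\mathcal{F}^{\lambda}$ of Theorem \ref{morita-equi} together with the already-cited Ariki--Grojnowski results (Proposition \ref{ariki-groj}) applied on each tensor factor. First I would use the canonical linear isomorphism
\[
\mathcal{F}^{\lambda}: K_{T}(\lambda)_{\mathbb{C}} \xrightarrow{\ \sim\ } K(\lambda)_{\mathbb{C}}^{\otimes r}
\]
obtained by summing (\ref{f-lambda}) over $n\geq 0$ and passing to $\mathbb{C}$. Under the Morita equivalence, each simple $Y_{r,n}^{\lambda,\mathbb{K}}$-module $S_{\mu}(L.)$ corresponds to a pure tensor of simple $\mathcal{H}_{\mu_k}^{\lambda,\mathbb{K}}$-modules, so $\mathcal{F}^{\lambda}$ sends the basis of $K_T(\lambda)_{\mathbb{C}}$ given by isomorphism classes of simple modules to a basis of $K(\lambda)_{\mathbb{C}}^{\otimes r}$ of pure tensors of simples.

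Next I would transport the $\widehat{sl}_{e}^{\oplus r}$-action. The discussion preceding Theorem 6.7 establishes that $e_{i,\chi^k}^{\lambda}$ (resp.\ $f_{i,\chi^k}^{\lambda}$) corresponds via $\mathcal{F}^{\lambda}$ to $e_{i}^{\lambda}$ (resp.\ $f_{i}^{\lambda}$) applied to the $k$-th tensor factor; this uses Lemma \ref{branch-rule-2} for $e_{i,\chi^k}^{\lambda}$ and Frobenius reciprocity (adjointness of induction and restriction) for $f_{i,\chi^k}^{\lambda}$. By Proposition \ref{ariki-groj}, each factor $K(\lambda)_{\mathbb{C}}$ is isomorphic to $L(\lambda)$ as an $\widehat{sl}_e$-module with Chevalley generators acting as $e_{i}^{\lambda}, f_{i}^{\lambda}$. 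Tensoring these $r$ copies together yields an $\widehat{sl}_{e}^{\oplus r}$-module structure on $K(\lambda)_{\mathbb{C}}^{\otimes r}$ isomorphic to $L(\lambda)^{\otimes r}$, where the $k$-th summand acts on the $k$-th factor. Transporting back along $\mathcal{F}^{\lambda}$ gives the first assertion, and irreducibility and integrability are inherited from $L(\lambda)$ on each factor.

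For the crystal statement, I would combine Theorem \ref{branch-rule-3} with Proposition \ref{ariki3}. Theorem \ref{branch-rule-3} shows that the socle of the restriction of $S_{\mu}(L.)$ decomposes as a direct sum indexed by $(a, k)$ with summands parametrized by $\tilde{e}_{a}L_{k}$ in the $k$-th slot; restricting to integral modules replaces $a\in\mathbb{K}^{*}$ by $i\in\mathbb{J}$ and $\tilde{e}_{a}$ by $\tilde{e}_{i}^{\lambda}$. Under $\mathcal{F}^{\lambda}$, this matches the crystal operators $\tilde{e}_{i}$ of the $k$-th tensor factor of $B(\lambda)^{\otimes r}$, and Proposition \ref{ariki3} identifies $\coprod_{n\geq 0}\mathrm{Irr}(\mathcal{H}_{n}^{\lambda,\mathbb{K}}\text{-}\mathbf{mod})$ with $B(\lambda)$ factor-by-factor. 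So the bijection on simples induced by $\mathcal{F}^{\lambda}$ intertwines the modular branching operators on the left with the Kashiwara operators of $B(\lambda)^{\otimes r}$ on the right, giving the isomorphism of crystal graphs.

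The main subtlety, rather than any deep obstacle, is the bookkeeping that identifies each $e_{i,\chi^k}^{\lambda}$ (indexed by a pair $(i,k)\in\mathbb{J}\times\{1,\dots,r\}$) with the action on exactly the $k$-th tensor slot; this requires tracking the decomposition in Definition \ref{ef-M} through the block decomposition (\ref{m-mu-gamma}), and verifying that $f_{i,\chi^k}^{\lambda}$ indeed inflates correctly via $\mathrm{ev}_{n,\lambda}^{*}$. Once this dictionary is in place, the theorem follows formally from the Morita equivalence and the Ariki--Grojnowski theorem.
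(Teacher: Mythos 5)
Your proposal is correct and follows essentially the same route as the paper: the paper likewise deduces the theorem by combining the Morita equivalence of Theorem \ref{morita-equi} (giving $K_{T}(\lambda)\cong K(\lambda)^{\otimes r}$), the identification of $e_{i,\chi^{k}}^{\lambda}$ and $f_{i,\chi^{k}}^{\lambda}$ with $e_{i}^{\lambda}$ and $f_{i}^{\lambda}$ on the $k$-th tensor factor via Lemma \ref{branch-rule-2} and Frobenius reciprocity, and Propositions \ref{ariki-groj} and \ref{ariki3} together with the branching rule of Theorem \ref{branch-rule-3}.
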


\subsection{A block decomposition of $Y_{r,n}^{\lambda}$-$\mathbf{mod}$}
The blocks of the Ariki-Koike algebra $\mathcal{H}_{m}^{\lambda}$ over an arbitrary algebraically closed field have been classified in \cite[Theorem A]{LM}. By the Morita equivalence established in Theorem \ref{morita-equi}, the decomposition \eqref{m-mu-gamma} provides us a block decomposition of $Y_{r,n}^{\lambda}$-$\mathbf{mod}.$\\

\noindent{\bf Acknowledgements.}
The authors would like to thank Professor Weiqiang Wang for very helpful discussions. The first author was partially supported by Young Scholars Program of Shandong University and by National Natural Science Foundation of China (11601273). The second author was partially supported by National Natural Science Foundation of China (11571036).



\end{document}